\newtheorem{theorem}{Theorem}
\newtheorem{remark}[theorem]{Remark}
\begin{document}
%
\title{Order reduction and how to avoid it when Lawson methods integrate reaction-diffusion boundary value problems}
\author{{\sc B. Cano} \thanks{Corresponding author. Email: bego@mac.uva.es}  \\ \small
IMUVA, Departamento de Matem\'atica Aplicada,\\ \small Facultad de
Ciencias, Universidad de
Valladolid,\\ \small Paseo de Bel\'en 7, 47011 Valladolid,\\ \small Spain \\
{\sc and}\\
{\sc N. Reguera}\thanks{Email: nreguera@ubu.es} \\
\small IMUVA, Departamento de Matem\'aticas y Computaci\'on, \\
\small  Escuela Polit\'ecnica Superior, Universidad de Burgos,\\
\small  Avda. Cantabria, 09006 Burgos, \\ \small  Spain }
\date{}

%
%

\maketitle

\begin{abstract}
It is well known that Lawson methods suffer from a severe order reduction when integrating initial boundary value problems where the solutions are not periodic in space or do not satisfy enough conditions of annihilation on the boundary. However, in a previous paper,
a modification of Lawson quadrature rules has been suggested so that no order reduction turns up when integrating linear problems subject to even time-dependent boundary conditions. In this paper, we describe and thoroughly analyse a technique to avoid also order reduction when integrating nonlinear problems. This is very useful because, given any Runge-Kutta method of any classical order, a Lawson method can be constructed associated to it for which the order is conserved.
\end{abstract}




\section{Introduction}
There is an effort in the recent literature to understand the order reduction which turns up when integrating initial boundary value problems subject to non-periodic boundary conditions with exponential methods. In that sense, we mention the papers \cite{acr1,CM,FOS} which correspond to the integration of linear differential problems through Lawson and standard exponential quadrature rules and also through splitting integrators. Understanding this well has led to the design of techniques to avoid that order reduction for both linear and nonlinear problems when the boundary conditions are not only non-periodic but also time-dependent. In that way, for linear problems, suitable modifications of Lawson and exponential quadrature rules have been seen to lead to the order of the underlying classical quadrature rule, as high as desired \cite{acr2,CM}. However, with exponential splitting methods, just order $2$ has been achieved for the moment for both linear and nonlinear problems \cite{ACR,acrnl,CR,EO,EO2}.

As Lawson methods of classical order as high as desired can be constructed directly from any chosen Runge-Kutta method of that order \cite{L}, our aim in this paper is to analyse the order reduction which turns up when integrating nonlinear reaction-diffusion boundary value problems with Lawson methods and to suggest a technique to avoid it also in such a case. For that, we consider an analysis of both the space and time discretization and we firstly give results on the type of error which is obtained when integrating firstly in space and then in time with both vanishing and non-vanishing boundary conditions. As for linear problems with vanishing boundary conditions \cite{acr1}, local order $1$ is just observed in general, which leads to global order also $1$ when a summation-by-parts argument can be applied. With non-vanishing boundary conditions, two types of behaviour are observed, which are both useless from the practical point of view because, either the error diminishes with the timestepsize but grows with the space grid, or remains practically the same when both the space and time grid diminish.

On the other hand, with our proposal, when integrating firstly in time, some terms of Lawson method are calculated through the solution of linear initial boundary value problems for which suitable boundary values are suggested.
Then, the space discretization of those problems must be performed. The main achievement of the paper is how to suggest those boundary values in order to get a given accuracy and how to calculate them in terms of the given data of the problem. More precisely, with any consistent RK method, we prove that local order $2$ can be achieved by calculating the boundary values exactly in terms of the given data for Dirichlet boundary conditions or in an approximate way (through the numerical solution) for Neumann/Robin ones. In any case, to obtain local order $2$, there is no need to resort to numerical differentiation. Besides, if the RK method has order $\ge 2$, global order $2$  is achieved if the summation-by-parts argument is applied. (For that, apart from enough regularity, we need the assumptions (\ref{spp}),(\ref{cspc}),(\ref{cspch}), which seem to be true but which proof is out of the scope of this paper.) Moreover, by resorting to numerical differentiation, local order $3$ can be achieved and, whenever a CFL condition is satisfied (\ref{condcfl}), global error $\ge 2$ and smaller errors are obtained. (We remark that this condition is not very restrictive in the sense that, when the differential problem is of second-order in space, $\gamma=1$ in (\ref{condcfl}) and therefore it just means that the timestepsize is not too big with respect to the space grid.) Finally, although the formulas get more complicated, by using numerical differentiation, local order $4$ can also be achieved if the underlying RK method is of order $\ge 3$. We have not described the formulas to get local order $\ge 5$ because they get more and more complicated, because there are less problems where so much accuracy is required, and also due to the fact that numerical differentiation is badly-posed \cite{JSS}, and therefore the rounding errors associated to their use might cause that not so much accuracy is achieved.

The paper is structured as follows. Section 2 gives some preliminaries on the abstract framework for the problem, on Lawson methods and on the assumptions we make for the space discretization. Section 3 analyses thoroughly the local and global error with the classical approach. The modification to avoid order reduction is suggested in Section 4, separating the cases in which local orders 2,3 and 4 want to be achieved. The final formulas to be implemented are respectively (\ref{fd2}), (\ref{fd3}) and (\ref{fd4}) and, in Remarks \ref{rembc}, \ref{rembc3} and \ref{rembc4}, a thorough discussion is given on how to calculate the required suggested boundaries for either Dirichlet, Robin or Neumann boundary conditions. Finally, some numerical results are shown in Section 5 which corroborate the theoretical results.

\section{Preliminaries}
Let $X$ and $Y$ be Banach spaces and let $A:D(A)\subset X \to X$ and
$\partial: X \to Y$ be linear operators. Our goal is to avoid order reduction when integrating in time through Lawson methods the nonlinear abstract non homogeneous initial
boundary value problem
\begin{eqnarray}
\label{laibvp}
\begin{array}{rcl}
u'(t)&=&Au(t)+f(t,u(t)), \quad  0\le t \le T,\\
u(0)&=&u_0 \in X,\\
\partial u(t)&=&g(t)\in Y, \quad  0\le t \le T.
\end{array}
\end{eqnarray}
We will assume that the functions $f:[0,T]\times X \to X$ (in general nonlinear)
and $g: [0,T] \to Y$ are  regular enough.

The abstract setting (\ref{laibvp}) permits to cover a wide range
of nonlinear evolutionary problems governed by partial
differential equations. We use the following hypotheses, similar to the ones in \cite{acrnl} when avoiding the same kind of problems with exponential splitting methods.

\begin{enumerate}
\item[(A1)] The boundary operator $\partial:D(A)\subset X\to Y$ is
onto and $g\in C^1([0,T],Y)$.

\item[(A2)] Ker($\partial$) is dense in $X$ and
$A_0:D(A_0)=\ker(\partial)\subset X \to X$, the restriction of $A$
to Ker($\partial$), is the infinitesimal generator of a $C_0$-
semigroup $\{e^{t A_0}\}_{t\ge 0}$ in $X$, which type $\omega$ is
assumed to be negative.

\item[(A3)] If $z \in \mathbb{C}$ satisfies $\Re (z) >0$ and $v
\in Y$, then the steady state problem
\begin{eqnarray}
 Ax &=& zx,\\
 \partial x&=&v,
\label{stationaryproblem}
\end{eqnarray}
possesses a unique solution denoted by $x=K(z)v$. Moreover, the
linear operator $K(z): Y \to D(A)$ satisfies
\begin{eqnarray}
\label{stationaryoperator} \| K(z)v\| \le C\|v\|,
\end{eqnarray}
where the constant $C$ holds for any $z$ such that $Re (z) \ge
\omega_0 > \omega$.

\item[(A4)] The nonlinear source $f$ belongs to $C^1([0,T] \times
X, X)$.

\item[(A5)] The solution $u$ of (\ref{laibvp}) satisfies $u\in
C^1([0,T], X)$, $u(t) \in D(A)$ for all $t \in [0,T]$ and $Au \in C([0,T], X)$.

\end{enumerate}

In the remaining of the paper, we always suppose that (A1)-(A5)
are satisfied. However, we notice that we also assume more
regularity in certain results. As for the well-posedness of problem (\ref{laibvp}), the same remark as in \cite{acrnl} can be made. We repeat it here for the sake of clarity.

\begin{remark}
From hypotheses (A1)-(A4), the problem (\ref{laibvp}) with
homogeneous boundary conditions has a unique classical solution
for small enough time intervals (see Theorem 6.1.5 in
\cite{pazy}).

Regarding the nonhomogeneous case, as in (A1) we also assume that  $g \in C^1([0,T],Y)$,  we can
look for a solution of (\ref{laibvp}) given by:
\begin{eqnarray*}
u(t)=v(t)+K(z)g(t), \quad t \ge 0,
\end{eqnarray*}
for some fixed $\Re(z) >\omega$. Then, $v$ is solution of an IBVP
with vanishing boundary values similar to the one in \cite{pazy}
and the well-posedness  for the case of
nonhomogeneous boundary values is a direct consequence if we take
the abstract theory for initial boundary value problems in
\cite{alonsomallop, palenciaa} into account.

However, condition (A4) may be very strong. When  $X$ is a
function space with a norm  $L^p$, $ 1 \le p < +\infty$, and $f$
is a Neminskii operator,
\begin{eqnarray*}
u \to f(u)= \phi (u),
\end{eqnarray*}
with $\phi : \mathbb{C} \to \mathbb{C}$, (A4)
implies that $\phi$ is globally Lipschitz in $\mathbb{C}$. This
objection disappears by considering the supremum norm, which is
used in our numerical examples, where the nonlinear source is
given by
\begin{eqnarray}
\label{nonlinearterm} u \to f(t,u)= \phi (u) + h(t),
\end{eqnarray}
with $h:[0,T] \to X$, that is, $f$ is the sum of a Neminskii
operator and a linear term. In this way, problem (\ref{laibvp}) is
well posed.
\end{remark}

Because of hypothesis (A2), $\{\varphi_j(t A_0)\}_{j=1}^{3}$   are
bounded operators for $t>0$, where $\{\varphi_j\}$ are the
standard functions which are used in exponential methods \cite{HO2} and which
are defined by
\begin{eqnarray}
\varphi_j( t A_0)=\frac{1}{t^j} \int_0^t
e^{(t-\tau)A_0}\frac{\tau^{j-1}}{(j-1)!}d\tau, \quad j \ge 1.
\label{varphi}
\end{eqnarray}
It is well-known that they can be calculated in a recursive way through the formulas
\begin{eqnarray}
\varphi_{j+1}(z)=\frac{\varphi_j(z)-1/j!}{z}, \quad z \neq 0,
\qquad \varphi_{j+1}(0)=\frac{1}{(j+1)!}, \qquad \varphi_0(z)=e^z.
\label{recurf}
\end{eqnarray}

For the time integration, we will center on exponential
Lawson methods which are determined by an explicit Runge-Kutta tableau and which, when applied to the
finite-dimensional nonlinear problem  like
\begin{eqnarray}
U'(t) = M U(t)+F(t,U(t)), \label{linfd}
\end{eqnarray}
where $M$ is a matrix, read like this at each step
\begin{eqnarray}
K_{n,i}&=&e^{c_i k M}U_n+k \sum_{j=1}^{i-1} a_{ij} e^{(c_i-c_j)k M} F(t_n+c_j k, K_{n,j}), \quad i=1,\dots,s, \label{etapas} \\
U_{n+1}&=&e^{k M}U_n+k \sum_{i=1}^s b_i e^{(1-c_i)k M}F(t_n+c_i k,K_{n,i}),
\label{lawson}
\end{eqnarray}
where $k >0$ is the time stepsize and $t_n=t_0+nk$.

Following the example in Section 2 of \cite{acrnl}, we take
$X=C(\overline{\Omega})$ for a certain bounded domain $\Omega\in \mathbb{R}^d$. There, we consider the maximum norm and  a
certain grid $\Omega_h$ (of $\Omega$) over which the approximated
numerical solution will be defined. In this way, this numerical
approximation belongs to $C^N$, where $N$ is the number of nodes
in the grid, and the maximum norm $\|u_h\|_h=\|[u_1,
\ldots, u_N]^T\|_h= \max_{1 \le i \le N}|u_i|$ is considered.

Notice that, usually, when considering Dirichlet boundary
conditions, nodes on the boundary are not considered while, when
using Neumann or Robin boundary conditions, the nodes on the
boundary are taken into account.

In that sense, we consider the projection operator
\begin{eqnarray}
 P_h : X \to \mathbb{C}^N, \label{ph}
\end{eqnarray}
which takes a function to its values over the grid $\Omega_h$.
 On the other hand, the
operator $A$, when applied over functions which satisfy a certain
condition on the boundary $\partial u=g$, is discretized by
\begin{eqnarray*}
A_{h,0}U_h+C_h g,
\end{eqnarray*}
where $A_{h,0}$ is the matrix which discretizes $A_0$ and $C_h: Y
\to \mathbb{C}^N$ is another operator, which is the one which
contains the information on the boundary.

We also assume that the source function $f$ has also sense as
function from $[0,T]\times \mathbb{C}^N$ on $\mathbb{C}^N$ and,
for each $t \in [0,T]$ and $u \in X$,
\begin{eqnarray}
\label{fandph} P_hf(t,u)= f(t, P_hu).
\end{eqnarray}
This fact is obvious when $f$ is given by (\ref{nonlinearterm}).

In a similar way to \cite{acrnl}, we consider the following hypotheses:
\begin{enumerate}
\item[(H1)] The matrix $A_{h,0}$ satisfies
\begin{enumerate}
\item $\|e^{tA_{h,0}}\|_h \le 1$,
\item $A_{h,0}$ is invertible and $\|A_{h,0}^{-1}\|_h \le C$ for
some constant $C$ which does not depend on $h$.
\end{enumerate}
\item[(H2)] We define the elliptic projection $R_{h}:D(A) \to
\mathbb{C}^N$  as the solution of
\begin{eqnarray}
A_{h,0} R_{h} u+ C_{h}\partial u=P_h Au. \label{rh}
\end{eqnarray}
We assume that there exists a subspace $Z \subset D(A)$, such
that, for $u \in Z$,
\begin{enumerate}
\item[(a)] $A_0^{-1} u \in Z$ and $e^{tA_0}u, f(t,u) \in Z$, for $t \in [0,T]$.
\item[(b)] for some $\varepsilon_{h}$ and $\eta_{h}$ which are
both small with $h$,
\begin{equation}
\label{consistency} \hspace{-0.5cm} \left\| A_{h,0}({P_hu-R_{h}u})
\right\| \le \varepsilon_{h} \left\| u \right\|_Z, \quad
\left\|P_hu-R_{h}u \right\| \le \eta_{h} \left\| u \right\|_Z.
\end{equation}
(Although obviously, because of (H1b), $\eta_h$ could be taken as
$C\varepsilon_h$, for some discretizations $\eta_h$ can decrease more quickly with $h$
than $\varepsilon_h$ and that leads to better error bounds in the
following sections.)
\item[(c)] $\|A_{h,0}^{-1} C_h\|_h \le C''$ for some constant
$C''$ which does not depend on $h$. This resembles the continuous
maximum principle which is satisfied because of
(\ref{stationaryoperator}) when $z=0$.
\end{enumerate}
\item[(H3)] The nonlinear source $f$ belongs to
$C^1([0,T] \times \mathbb{C}^N, \mathbb{C}^N)$ and the derivative with respect to the variable in $\mathbb{C}^N$
is uniformly bounded in a neighbourhood of the solution where the numerical approximation stays.
\end{enumerate}

As in \cite{acrnl}, hypothesis (H1a) can be deduced in our numerical experiments
by using the logarithmic norm of matrix
$A_{h,0}$.

\section{Classical approach: Discretizing firstly in space and then in time}

It was already proved in \cite{acr1} that Lawson methods, even when applied to linear problems with vanishing boundary conditions, show in general just order $1$ in time.
For that reason, with nonlinear problems and probably non-vanishing boundary conditions, in general we cannot expect more than that order. In any case, in this section we generalise the main result in \cite{acr1} not only in the sense of considering more general problems but also in the sense of taking the error coming from the space discretization into account.

Notice that, by using the space discretization which is described in the previous section, the following semidiscrete problem arises after
discretising (\ref{laibvp}),
\begin{eqnarray}
\label{laibvpdisc} \left.
\begin{array}{rcl}
U'_h(t)&=&A_{h,0}U_h(t) + C_{h}g(t) +f(t,U_h(t)),\\
U_h(t_0)&=&P_h u(t_0).
\end{array}
\right.
\end{eqnarray}
Then, applying Lawson method (\ref{etapas})-(\ref{lawson}) to this, the following formulas define one step from $U_h^n$ to $U_h^{n+1}$:
\begin{eqnarray}
K_{h,i}^n &=& e^{c_i k A_{h,0}}U_h^n+k \sum_{j=1}^{i-1} a_{ij} e^{(c_i-c_j)k A_{h,0}}[ C_h g(t_n+c_j k)+f(t_n+c_j k,K_{h,j}^n)], \quad i=1,\dots,s, \nonumber \\
U_h^{n+1}&=& e^{ k A_{h,0}}U_h^n +k \sum_{i=1}^s b_i e^{(1-c_i)k A_{h,0}}[C_h g(t_n+c_i k)+f(t_n+c_i k,K_{h,i}^n)]. \label{lawsonca}
\end{eqnarray}
We define the local error as $\rho_{h,n+1}=\bar{U}_h^{n+1} -P_h u(t_{n+1})$, where $u(t)$ is the solution of (\ref{laibvp}) and $\bar{U}_h^{n+1}$ is deduced as $U_h^{n+1}$ but starting from $P_h u(t_n)$ instead of $U_h^n$. Then,
\begin{eqnarray}
\bar{U}_h^{n+1}= e^{ k A_{h,0}}P_h u(t_n) +k \sum_{i=1}^s b_i e^{(1-c_i)k A_{h,0}}[C_h g(t_n+c_i k)+f(t_n+c_i k,\bar{K}_{h,i}^n)], \label{Uhb}
\end{eqnarray}
where, for $i=1,\dots,s$,
\begin{eqnarray}
\bar{K}_{h,i}^n = e^{c_i k A_{h,0}}P_h u(t_n) +k \sum_{j=1}^{i-1} a_{ij} e^{(c_i-c_j)k A_{h,0}}[ C_h g(t_n+c_j k)+f(t_n+c_j k,\bar{K}_{h,j}^n)]. \label{Khb}
\end{eqnarray}
\subsection{Local and global error when $g(t)\equiv 0$}
In this subsection, we will analyse the error with the classical approach under the assumption that the boundary conditions vanish, which is the only case which has been studied for other standard exponential  methods when integrating nonlinear problems and where just the error coming from the time integration has been analysed \cite{HO}. As distinct, here we also consider the error coming from the space discretization.
\begin{theorem}
\label{teorcalocal}
Under hypotheses (A1)-(A5) and (H1)-(H3),
whenever $u\in C([0,T],Z)$ and $\partial u\equiv 0$,
$$\rho_{h,n}=\bar{U}_h^{n+1}-P_h u(t_{n+1})=O(k), $$
where the constant in Landau notation is independent of $k$ and $h$. Moreover, if  $\sum_{i=1}^s b_i=1$ and $u\in C^2([0,T],X)$,
it happens that
$$A_{h,0}^{-1} \rho_{h,n}=O(\eta_h k+k^2).$$
\end{theorem}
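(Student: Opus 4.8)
The plan is to compare $\bar U_h^{n+1}$ with the exact semidiscrete flow, exploiting the fact that for vanishing boundary conditions the $C_h g$ terms in (\ref{Uhb})--(\ref{Khb}) drop out entirely. First I would write the exact solution using the variation-of-constants formula for the semidiscrete problem (\ref{laibvpdisc}) with $g\equiv 0$, namely
\begin{eqnarray*}
P_h u(t_{n+1}) &=& e^{kA_{h,0}} P_h u(t_n) + \int_0^k e^{(k-\tau)A_{h,0}} P_h f(t_n+\tau, u(t_n+\tau))\dd\tau + R_n,
\end{eqnarray*}
where $R_n$ collects the defect coming from the space discretization, i.e. the difference between $P_h(Au+f)$ and $A_{h,0}P_h u + P_h f$, controlled via the elliptic projection $R_h$ and the consistency bound (\ref{consistency}); note $\partial u\equiv 0$ forces $C_h\partial u=0$, so (\ref{rh}) gives $A_{h,0}(P_h u-R_h u)=A_{h,0}P_h u - P_h Au$, whence $\|A_{h,0}P_hu - P_h A u\|\le\varepsilon_h\|u\|_Z$. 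Subtracting the Lawson formula (\ref{Uhb}), the leading $e^{kA_{h,0}}P_h u(t_n)$ terms cancel, and the remaining difference is a sum over stages of $e^{(1-c_i)kA_{h,0}}$ times differences of $f$-values, each multiplied by $k$.

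For the first assertion (local order $1$) I would bound the stage differences $\bar K_{h,i}^n - P_h u(t_n+c_ik)$ recursively: using (H1a) for the uniform bound on $\|e^{tA_{h,0}}\|_h$, (H3) for the Lipschitz bound on $f$, and a discrete Gronwall argument over the finitely many stages, one gets $\bar K_{h,i}^n - P_h u(t_n+c_ik) = O(k)$ (the mismatch between the one-sided quadrature implicit in the Lawson stages and the true integral is already $O(k)$ because no order conditions beyond consistency are used, plus the $O(\varepsilon_h k)=O(k)$ space defect). Feeding this into the difference of (\ref{Uhb}) and the variation-of-constants formula, each term is $k\cdot O(k)=O(k^2)$ except for the quadrature error of the principal integral against $e^{(1-c_i)kA_{h,0}}$, which is $O(k)$ regardless of the $b_i$, and the $O(\varepsilon_h k)$ space term; altogether $\rho_{h,n+1}=O(k)$, uniformly in $h$.

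For the refined estimate on $A_{h,0}^{-1}\rho_{h,n+1}$ under $\sum b_i=1$, the key idea is to apply $A_{h,0}^{-1}$ and use $A_{h,0}^{-1}(e^{(1-c_i)kA_{h,0}}-I) = \int_0^{(1-c_i)k} e^{sA_{h,0}}\dd s = O(k)$ together with $A_{h,0}^{-1}(e^{kA_{h,0}}-I)=O(k)$, so that the ``frozen'' quadrature $k\sum_i b_i f(t_n+c_ik,\cdot)$ with $\sum b_i=1$ reproduces $\int_0^k f\dd\tau$ up to $O(k^2)$ once the exponentials are replaced by $I$ at the cost of $A_{h,0}^{-1}$-terms of size $O(k)\cdot O(k)$; here the $C^2$ regularity in time is what upgrades the quadrature defect from $O(k)$ to $O(k^2)$. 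The space defect, after multiplication by $A_{h,0}^{-1}$, becomes $\|A_{h,0}^{-1}A_{h,0}(P_hu-R_hu)\|$-type and $\|P_h u - R_h u\|$-type contributions, i.e. $O(\eta_h)$ per unit time and hence $O(\eta_h k)$ locally (using (H2)(a) to keep the relevant quantities in $Z$ and (H2)(c), (H1b) for the $C_h$ and inverse bounds). Collecting everything yields $A_{h,0}^{-1}\rho_{h,n+1}=O(\eta_h k + k^2)$.

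The main obstacle I anticipate is the bookkeeping of the space-discretization defect through the recursive stage definition while keeping all constants independent of $h$: one must repeatedly insert $R_h u$, use (\ref{rh}) to trade $A_{h,0}R_h u$ for $P_h Au - C_h\partial u$, and then control the accumulated $C_h$ and $A_{h,0}^{-1}$ factors by (H2)(c) and (H1b) — and, crucially, verify that the relevant intermediate functions ($e^{tA_0}u$, $f(t,u)$, $A_0^{-1}u$) stay in the subspace $Z$, which is exactly what (H2)(a) is designed to guarantee. The second, more delicate point is making sure that replacing $e^{(1-c_i)kA_{h,0}}$ by $I$ in the $A_{h,0}^{-1}$-estimate is legitimate, i.e. that the error terms $A_{h,0}^{-1}(e^{(1-c_i)kA_{h,0}}-I)$ act on quantities that are themselves $O(k)$-close to something $h$-uniformly bounded, so the product is genuinely $O(k^2)$ and not merely $O(k)$.
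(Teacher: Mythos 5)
Your proposal is correct, and it takes a genuinely different route from the paper's proof. You represent $P_h u(t_{n+1})$ by the variation-of-constants formula for the semidiscrete problem perturbed by the elliptic-projection defect $A_{h,0}(R_h-P_h)u$, which is $O(\varepsilon_h)$ by (\ref{consistency}) since $u\in C([0,T],Z)$ and becomes $O(\eta_h)$ once $A_{h,0}^{-1}$ is applied; the Lawson step is then read as a quadrature rule for the resulting integral, so that replacing the exponentials by $I$ costs $k\cdot O(k)$ after multiplication by $A_{h,0}^{-1}$, the stage perturbation costs $k\cdot O(k)$ by (H1b), (H3) and the first-part stage bound, and $\sum_i b_i=1$ makes the remaining quadrature defect $O(k^2)$ because $t\mapsto f(t,u(t))$ is $C^1$ by (A4)--(A5) (in your route the full $u\in C^2([0,T],X)$ hypothesis is barely needed; the paper uses it to Taylor-expand $P_hu(t_{n+1})$ about $t_n$). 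The paper never writes the exact integral: it expands $e^{kA_{h,0}}$ and $e^{(1-c_i)kA_{h,0}}$ through the $\varphi$-recursion (\ref{recurf}), obtains the first claim almost immediately by showing both $\bar U_h^{n+1}$ and $P_hu(t_{n+1})$ equal $P_hu(t_n)+O(k)$ (the uniform bound (\ref{acot0}) coming from $A_{h,0}R_hu=P_hAu$), and for the refined bound isolates the $O(k)$ bracket $P_hu(t_n)-A_{h,0}^{-1}P_h\dot u(t_n)+A_{h,0}^{-1}f(t_n,P_hu(t_n))$, which collapses to $P_hu(t_n)-R_hu(t_n)=O(\eta_h)$ via (\ref{rh}) and (\ref{fandph}); there $\eta_h$ enters through an algebraic cancellation at $t_n$ rather than through $A_{h,0}^{-1}$ applied to an integrated defect, as in your version. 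Both arguments rest on the same two facts, namely $A_{h,0}R_hu=P_hAu$ when $\partial u\equiv0$ and the consistency bounds, so the content is equivalent; yours is closer to a standard exponential-integrator defect analysis, the paper's is shorter for the first claim and displays the source of $\eta_h$ more explicitly. One point to keep in mind: your stage estimate $\bar K_{h,i}^n-P_hu(t_n+c_ik)=O(k)$ does not follow from (H1a) and (H3) alone, since $e^{c_ikA_{h,0}}P_hu(t_n)-P_hu(t_n+c_ik)$ must be controlled uniformly in $h$ through the same defect representation (or through (\ref{acot0})); you acknowledge this when discussing the insertion of $R_h u$, so the argument closes, but that insertion is the load-bearing step and should appear explicitly in the stage bound, not only in the final comparison.
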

\begin{proof}
Firstly we notice that, for $i=1,\dots,s$, $\bar{K}_{h,i}^n$ in (\ref{Khb})
are uniformly bounded on $h$ when $g(t)\equiv 0$, which can be proved by induction on $i$ by using (H1a),  (A4) and (A5).
Then, by using (\ref{recurf}),
\begin{eqnarray}
\bar{U}_h^{n+1}&=& P_h u(t_n) + k A_{h,0} \varphi_1(k A_{h,0}) P_h u(t_n)+k \sum_{i=1}^s b_i e^{(1-c_i)k A_{h,0}}f(t_n+c_i k,\bar{K}_{h,i}^n) \nonumber \\
&=& P_h u(t_n)+O(k), \nonumber
\end{eqnarray}
where we have used that $A_{h,0} \varphi_1(k A_{h,0})P_h u(t_n)$ is uniformly bounded on $h$ because
\begin{eqnarray}
A_{h,0} \varphi_1(k A_{h,0}) P_h u(t_n)& = &\varphi_1(k A_{h,0}) A_{h,0} R_h u(t_n)+\varphi_1(k A_{h,0}) A_{h,0}(P_h -R_h) u(t_n) \nonumber \\
& = &\varphi_1(k A_{h,0}) P_h A u(t_n)+O(\varepsilon_h). \label{acot0}
\end{eqnarray}
(Here, the second equality comes from the fact that $A_{h,0} R_h u(t)= P_h A u(t)$ due to (\ref{rh}) with $\partial u=0$, and also to (\ref{consistency}) considering that $u\in C([0,T],Z)$.)

As for the second result, notice that, by using (\ref{recurf}) again and an argument similar to (\ref{acot0}) ,  $\bar{K}_{h,i}^n$ can also be written as
\begin{eqnarray}
\bar{K}_{h,i}^n=P_h u(t_n)+c_i k A_{h,0} \varphi_1(c_i k A_{h,0})P_h u(t_n)+O(k)=P_h u(t_n)+O(k).
\label{kbhni}
\end{eqnarray}
Then, using now (\ref{recurf}) to expand $e^{k A_{h,0}}$ till $\varphi_2(k A_{h,0})$ and $e^{(1-c_i)k A_{h,0}}$ till $\varphi_1((1-c_i)k A_{h,0})$ and again an argument similar to (\ref{acot0}) for $A_{h,0}\varphi_2(k A_{h,0}) P_h u(t_n)$,
\begin{eqnarray}
\lefteqn{A_{h,0}^{-1} \rho_{h,n+1}} \nonumber \\
&=&A_{h,0}^{-1}\bigg[ P_h u(t_n)+ k A_{h,0} P_h u(t_n)+k^2 A_{h,0}^2 \varphi_2(k A_{h,0})P_h u(t_n) \nonumber \\
&& \hspace{1cm}+k \sum_{i=1}^s b_i [f(t_n+c_i k, \bar{K}_{h,i}^n)+(1-c_i)k A_{h,0} \varphi_1((1-c_i) k A_{h,0})f(t_n+c_i k, \bar{K}_{h,i}^n)] \nonumber \\
&&\hspace{1cm} -P_h u(t_n)-k P_h \dot{u}(t_n)+O(k^2)\bigg] \nonumber \\
&=& k [P_h u(t_n)-A_{h,0}^{-1} P_h \dot{u}(t_n)+A_{h,0}^{-1}f(t_n,P_h u(t_n))]+O(k^2), \label{formula}
\end{eqnarray}
where (H1b) and (\ref{kbhni}) have been used as well as the fact that $\sum_{i=1}^s b_i=1$.
Using now that
$$A_{h,0}^{-1}P_h \dot{u}(t)=A_{h,0}^{-1} P_h [A u(t)+f(t,u(t))]=R_h u(t)+A_{h,0}^{-1} P_h f(t,u(t)),$$
the bracket in (\ref{formula}) is $O(\eta_h)$ according to (\ref{consistency}) and using (\ref{fandph}), from what the result follows.
\end{proof}

 Using the classical argument for the global error, the first result of the previous theorem would not lead to convergence. However, by using the second result and a few more assumptions, a summation-by-parts argument very similar to that given in \cite{acrnl} for Strang method and parabolic problems when avoiding order reduction does lead to the following result.
\begin{theorem}
\label{teorcaglobal}
Under the hypotheses of Theorem \ref{teorcalocal}, and assuming also that $u\in C^3([0,T],X)\cap C^1([0,T],Z)$, with $\dot{u}(t)\in D(A)$ for $t\in [0,T]$, $\dot{u}\in C([0,T],Z)$, $A \dot{u}\in C([0,T],X)$  and
\begin{eqnarray}
\|k A_{h,0} \sum_{r=1}^{n-1} e^{r k A_{h,0}}\|_h
\le C , \quad 0 \le n k \le T, \label{spp}
\end{eqnarray}
it happens that
$$e_{h,n}= U_h^n-P_h u(t_n)=O(\eta_h +k).$$
\end{theorem}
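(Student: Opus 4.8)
The plan is to propagate the two local-error estimates of Theorem \ref{teorcalocal} through the one-step recursion and to close the argument with a discrete Gronwall inequality, the decisive technical device being a summation by parts that trades the factor $n=t_n/k$ against the extra power of $k$ produced by the bound $A_{h,0}^{-1}\rho_{h,n}=O(\eta_h k+k^2)$ and by assumption (\ref{spp}).

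First I would set up the error recursion. Subtracting $\bar U_h^{n+1}$ in (\ref{Uhb}) (the step started at $P_hu(t_n)$) from $U_h^{n+1}$ in (\ref{lawsonca}) (the step started at $U_h^n$), and writing $\delta f_{n,i}=f(t_n+c_ik,K_{h,i}^n)-f(t_n+c_ik,\bar K_{h,i}^n)$, one gets
$$e_{h,n+1}=e^{kA_{h,0}}e_{h,n}+k\sum_{i=1}^{s}b_ie^{(1-c_i)kA_{h,0}}\,\delta f_{n,i}+\rho_{h,n+1}.$$
By (H1a) every exponential is a contraction in $\|\cdot\|_h$, and a short induction on $i$ using (H1a) and the Lipschitz bound (H3) gives $\|K_{h,i}^n-\bar K_{h,i}^n\|_h\le(1+Ck)\|e_{h,n}\|_h$, so the middle sum equals $kB_{h,n}e_{h,n}$ for operators with $\|B_{h,n}\|_h\le C$ that carry \emph{no} unbounded factor of $A_{h,0}$. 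Since (\ref{laibvpdisc}) gives $U_h^0=P_hu(t_0)$, i.e. $e_{h,0}=0$, unrolling with the bare semigroup yields $e_{h,n}=\sum_{j=1}^{n}e^{(n-j)kA_{h,0}}(kB_{h,j-1}e_{h,j-1}+\rho_{h,j})$, hence $\|e_{h,n}\|_h\le Ck\sum_{j=1}^{n}\|e_{h,j-1}\|_h+\big\|\sum_{j=1}^{n}e^{(n-j)kA_{h,0}}\rho_{h,j}\big\|_h$. Thus it suffices to prove that the accumulated local error is $O(\eta_h+k)$, and then the discrete Gronwall lemma gives $e_{h,n}=O(\eta_h+k)$ with constants independent of $k$ and $h$.

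To estimate $\sum_j e^{(n-j)kA_{h,0}}\rho_{h,j}$, I would use the refined structure uncovered in the proof of Theorem \ref{teorcalocal}: the bracket in (\ref{formula}) is exactly $\psi_{h,n}:=P_hu(t_n)-R_hu(t_n)$, so $\rho_{h,j}=kA_{h,0}\psi_{h,j-1}+\theta_{h,j}$ with $\|\psi_{h,m}\|\le C\eta_h$, $\|A_{h,0}\psi_{h,m}\|\le C\varepsilon_h$ by (\ref{consistency}), and $\|A_{h,0}^{-1}\theta_{h,j}\|=O(k^2)$. For the $\psi$-part, writing $e^{rkA_{h,0}}=S_r-S_{r-1}$ with $S_r=\sum_{\ell=1}^{r}e^{\ell kA_{h,0}}$ ($S_0=0$) and applying Abel's identity, $\sum_j kA_{h,0}e^{(n-j)kA_{h,0}}\psi_{h,j-1}$ reduces to the boundary term $(kA_{h,0}S_{n-1})\psi_{h,0}$, the single term $kA_{h,0}\psi_{h,n-1}$, and $\sum_r(kA_{h,0}S_r)(\psi_{h,m}-\psi_{h,m-1})$; by (\ref{spp}) the operators $kA_{h,0}S_r$ are uniformly bounded, $\|\psi_{h,0}\|\le C\eta_h$, $\|kA_{h,0}\psi_{h,n-1}\|\le Ck\varepsilon_h$, and $\psi_{h,m}-\psi_{h,m-1}=(P_h-R_h)\int_{t_{m-1}}^{t_m}\dot u(s)\,ds=O(k\eta_h)$ because $\dot u\in C([0,T],Z)$, so the sum of these $O(k\eta_h)$ differences is $O(\eta_h)$; altogether this part is $O(\eta_h+k)$. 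The remainder is handled similarly: writing $\theta_{h,j}=A_{h,0}\sigma_{h,j}$ with $\|\sigma_{h,j}\|=O(k^2)$, the crude bound $\|A_{h,0}e^{mkA_{h,0}}\|_h\le C/k$ (a consequence of (\ref{spp})) only yields $O(k)$ per step, hence $O(1)$ after summing, so one Abel summation is again needed, this time using that $\sigma_{h,j}$ is a smooth lattice function: inspecting the terms behind (\ref{formula}) one sees that $\|\sigma_{h,j}-\sigma_{h,j-1}\|=O(k^3)$, the Taylor-remainder contribution being controlled by $u\in C^3([0,T],X)$ and the $k^2A_{h,0}^2\varphi_2(kA_{h,0})P_hu(t_{j-1})$-type and stage/source contributions by $\dot u\in C([0,T],Z)$, $\dot u(t)\in D(A)$ and $A\dot u\in C([0,T],X)$ (which legitimize differentiating $Au(\cdot)$). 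After this summation by parts each surviving term is either $\|kA_{h,0}S_r\|_h\le C$ times $O(k^2)$, or $\|A_{h,0}S_r\|_h\le C/k$ times $O(k^3)$, or a boundary term of size $O(k)$, so the whole remainder sum is $O(k)$, and therefore $\sum_j e^{(n-j)kA_{h,0}}\rho_{h,j}=O(\eta_h+k)$.

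The main obstacle I anticipate is precisely the bookkeeping in the previous step: one must identify, among the terms of the local-error expansion, which ones become genuinely $O(k^2)$ after multiplication by $A_{h,0}^{-1}$ \emph{and}, more delicately, verify that these remainders vary only by $O(k^3)$ from one step to the next — this slow variation, not mere smallness, is what makes the summation by parts effective, and it is the sole reason the stronger regularity hypotheses $u\in C^3$, $\dot u\in Z$, $A\dot u\in C$, $\dot u\in D(A)$ are imposed. Everything else (the stability of the stages, the contractivity, the final Gronwall step) is routine; the only structural input beyond Theorem \ref{teorcalocal} is the uniform bound (\ref{spp}), whose role is exactly to absorb the $1/k$ appearing in $\|A_{h,0}S_r\|_h$ against the $k$ gained by differencing, so that the final constant does not deteriorate as $n\to\infty$.
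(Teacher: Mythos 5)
Your architecture is the one the paper intends (it only sketches the proof by invoking the summation-by-parts argument of the splitting paper): the error recursion with Lipschitz control of the stages, the discrete Gronwall lemma, and an Abel summation of $\sum_j e^{(n-j)kA_{h,0}}\rho_{h,j}$ driven by $A_{h,0}^{-1}\rho_{h,j}=O(\eta_h k+k^2)$ and (\ref{spp}). Your identification of the bracket in (\ref{formula}) with $(P_h-R_h)u(t_{j-1})$ and your treatment of that part (boundary terms $O(\eta_h)$ and $O(k\varepsilon_h)$, differences $(P_h-R_h)\int_{t_{j-1}}^{t_j}\dot u = O(k\eta_h)$ via $\dot u\in C([0,T],Z)$) are correct.

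The gap is in the blanket claim $\|\sigma_{h,j}-\sigma_{h,j-1}\|=O(k^3)$ for the whole remainder $\sigma_{h,j}=A_{h,0}^{-1}\theta_{h,j}$. Writing $\sigma_{h,j}$ out, it contains, besides the Taylor remainder of $u$ (handled by $u\in C^3$) and the terms $k^2A_{h,0}\varphi_2(kA_{h,0})P_hu(t_{j-1})$ and $k^2\sum_i b_i(1-c_i)\varphi_1((1-c_i)kA_{h,0})f(t_{j-1}+c_ik,\bar K_{h,i}^{j-1})$ (handled by $\dot u(t)\in D(A)$, $\dot u\in C([0,T],Z)$, $A\dot u\in C([0,T],X)$ and (H3)), also the stage contribution $k\sum_i b_iA_{h,0}^{-1}\bigl[f(t_{j-1}+c_ik,\bar K_{h,i}^{j-1})-f(t_{j-1},P_hu(t_{j-1}))\bigr]$. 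Its consecutive difference is a second difference of $f$ along displacements of size $O(k)$; to make it $O(k^3)$ you need $f_t,f_u$ locally Lipschitz (essentially $f\in C^2$), which is not provided by (A4)/(H3) nor by the hypotheses of the theorem — with $f\in C^1$ you only get $o(k^2)$ per step, and after multiplying by $\|A_{h,0}S_r\|_h\le C/k$ and summing the $O(T/k)$ terms the conclusion degrades to $o(1)$ rather than $O(k)$. The repair is the one the cited argument uses: do not push this nonlinear term through $A_{h,0}^{-1}$ at all. Since $\bar K_{h,i}^{j-1}-P_hu(t_{j-1})=O(k)$ and $f_u$ is bounded, its contribution to $\rho_{h,j}$ is already $O(k^2)$ in the plain norm, so by (H1a) its accumulated sum is $O(nk^2)=O(k)$ directly; the summation by parts should be reserved for the genuinely stiff pieces listed above (elliptic-projection term, $\varphi_2$-term, and $k\sum_i b_i[e^{(1-c_i)kA_{h,0}}-I]$ acting on $f$). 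With that redistribution your proof closes under exactly the stated hypotheses.
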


\subsection{Local and global error when $g(t) \neq 0$}
In this subsection we study the more general case in which $g(t)\neq 0$. For the sake of brevity, we will consider just the case in which all the nodes $c_i$ are strictly increasing with the index $i$, but we will distinguish between the case in which $c_i\neq 1$ for $i=1,\dots,s$ and the case in which some of those $c_i$ is equal to $1$. Both types of results show a very poor behaviour, but our aim is just to explain the differences between them.
\begin{theorem}
\label{teorcalocal2}
Under hypotheses (A1)-(A5) and (H1)-(H3), assuming also that $\partial u\neq 0$, $c_1<c_2<\dots<c_s$, $c_i\neq 1$ for $i=1,\dots,s$ and that there exists $C'$, $h_0$ such that
\begin{eqnarray}
\|\tau A_{h,0} e^{\tau A_{h,0}}\|_h \le C', \quad \tau\ge 0,\quad h\le h_0,\label{acotses}
\end{eqnarray}
it happens that
$$\rho_{h,n}=O(1), \quad A_{h,0}^{-1} \rho_{h,n}=O(k),$$
where the constant in Landau notation is independent of $k$ and $h$.
\end{theorem}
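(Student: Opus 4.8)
The plan is to mimic the structure of the proof of Theorem \ref{teorcalocal}, but now carry along the non-vanishing boundary term $C_h g$, whose presence is precisely what destroys the cancellations that gave local order $1$ there. First I would expand each internal stage $\bar K_{h,i}^n$ in (\ref{Khb}) using the recursion (\ref{recurf}) for the matrix exponentials. The new feature compared to the $g\equiv 0$ case is the appearance of terms of the form $k\,a_{ij}\,e^{(c_i-c_j)kA_{h,0}}C_h g(t_n+c_jk)$; using (\ref{recurf}) to write $e^{(c_i-c_j)kA_{h,0}}=I+(c_i-c_j)kA_{h,0}\varphi_1((c_i-c_j)kA_{h,0})$ and invoking (H2c) ($\|A_{h,0}^{-1}C_h\|_h\le C''$), one sees these contribute $O(k)$ to $\bar K_{h,i}^n$ and, crucially, $A_{h,0}^{-1}$ applied to them is still only $O(k)$ — there is no extra smallness because $C_h g$ need not be in the range where the elliptic projection estimate (\ref{consistency}) applies. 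So I would establish $\bar K_{h,i}^n = P_h u(t_n)+O(k)$, uniformly in $h$.

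Next I would plug these into the update formula (\ref{Uhb}) for $\bar U_h^{n+1}$ and again expand $e^{kA_{h,0}}$ and $e^{(1-c_i)kA_{h,0}}$ via (\ref{recurf}). The term $kA_{h,0}\varphi_1(kA_{h,0})P_h u(t_n)$ is handled exactly as in (\ref{acot0}), using the elliptic projection and (\ref{consistency}), since $u(t_n)\in Z$ — this part is $O(1)$ (bounded) but not small. The genuinely obstructive term is $k\sum_i b_i e^{(1-c_i)kA_{h,0}}C_h g(t_n+c_ik)$: expanding the exponential gives a leading piece $k\sum_i b_i C_h g(t_n+c_ik)$, which after applying $A_{h,0}^{-1}$ and (H2c) is $O(k)$, but is NOT cancelled by anything in $P_h u(t_{n+1})$ — subtracting the exact solution only removes $P_h u(t_n)+kP_h\dot u(t_n)+\cdots$, and $P_h\dot u(t_n)=P_h A u(t_n)+P_h f(t_n,u(t_n))$ involves $P_h A u(t_n)$, not $C_h g(t_n)$. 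Hence $\rho_{h,n}$ itself is merely $O(1)$ (the exponential-times-$C_h g$ terms are bounded by (H2c) but of size $O(1)$ before multiplying by nothing — one must be careful: actually each such term carries a factor $k$, so $\rho_{h,n}=O(1)$ comes from the $kA_{h,0}\varphi_1(kA_{h,0})P_h u$-type pieces combined with the uncancelled structure; I would track this explicitly), while $A_{h,0}^{-1}\rho_{h,n}=O(k)$ because every surviving term, once hit with $A_{h,0}^{-1}$, either becomes $O(k)$ directly (the $C_h g$ terms, via (H2c)) or is the $A_{h,0}^{-1}$ of a consistency-controlled quantity, hence $O(\eta_h)+O(k)=O(k)$.

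The role of the hypothesis (\ref{acotses}), $\|\tau A_{h,0}e^{\tau A_{h,0}}\|_h\le C'$, is to bound the terms $A_{h,0}\varphi_j((c_i-c_j)kA_{h,0})$ and $A_{h,0}\varphi_1((1-c_i)kA_{h,0})$ that arise from the expansions, using the representation (\ref{varphi}) together with this uniform bound on $\tau A_{h,0}e^{\tau A_{h,0}}$; the assumption $c_i\neq 1$ and $c_1<\cdots<c_s$ ensures all the arguments $(c_i-c_j)k$ and $(1-c_i)k$ are bounded away from $0$ relative to each other so that the $\varphi_j$-expansions behave and no argument degenerates — this is exactly what separates this case from Theorem \ref{teorcalocal2}'s sequel where some $c_i=1$.

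The main obstacle I expect is the bookkeeping needed to show that $\rho_{h,n}=O(1)$ and no better: one must verify that the uncancelled boundary contributions really do persist at size $O(1)$ after $A_{h,0}$ is applied (equivalently, that multiplying by $A_{h,0}^{-1}$ genuinely recovers one power of $k$ and not more), which requires carefully isolating the term $kA_{h,0}\varphi_1(kA_{h,0})(P_h-R_h)u(t_n)$-type error and the $C_h g$ terms and checking none of them is secretly smaller. The positive estimate $A_{h,0}^{-1}\rho_{h,n}=O(k)$ is the easier half, following the template of (\ref{formula}) almost verbatim once (H2c) is invoked for the boundary pieces.
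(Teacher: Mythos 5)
Your overall architecture (bound the stages, expand the exponentials, use (H2c) and (\ref{acotses})) is the same as the paper's, and your treatment of the second assertion $A_{h,0}^{-1}\rho_{h,n}=O(k)$ is essentially the paper's one-line argument. But there is a genuine gap in the first and decisive step. You propose to write $e^{(c_i-c_j)kA_{h,0}}=I+(c_i-c_j)kA_{h,0}\varphi_1((c_i-c_j)kA_{h,0})$ and then invoke (H2c) to conclude that the terms $k\,a_{ij}e^{(c_i-c_j)kA_{h,0}}C_hg(t_n+c_jk)$ contribute $O(k)$ to $\bar K_{h,i}^n$, so that $\bar K_{h,i}^n=P_hu(t_n)+O(k)$. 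This does not work: splitting off the identity leaves the piece $k\,a_{ij}C_hg(t_n+c_jk)$, whose norm is of size $k\|C_h\|_h$ (about $k/h^2$ for the Dirichlet discretization (\ref{Ah0dir})), and (H2c) cannot be applied to it because there is no $A_{h,0}^{-1}$ in front of $C_h$; the smoothing of the full exponential is exactly what your expansion discards. The paper keeps the exponential intact and uses the factorization $ke^{(c_i-c_j)kA_{h,0}}C_hg=\frac{1}{c_i-c_j}\bigl[(c_i-c_j)kA_{h,0}e^{(c_i-c_j)kA_{h,0}}\bigr]A_{h,0}^{-1}C_hg$, bounding the bracket by (\ref{acotses}) and the last factor by (H2c); this is precisely where the hypotheses $c_1<\dots<c_s$ (so that one may divide by $c_i-c_j$) and $c_i\neq 1$ (same argument for the $e^{(1-c_i)kA_{h,0}}C_hg$ terms in $\bar U_h^{n+1}$) enter. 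Your third paragraph gestures at this use of (\ref{acotses}), but it is inconsistent with the first paragraph's claim that (H2c) alone yields $O(k)$. Note also that the correct factorization yields only $O(1)$, not $O(k)$: these boundary contributions do not shrink with $k$ (which is why the theorem, and the numerics for the third-order method, give $\rho_{h,n}=O(1)$ and no better), so your intermediate claim $\bar K_{h,i}^n=P_hu(t_n)+O(k)$ is both unjustified and false in general; fortunately only uniform boundedness of the stages and of $\bar U_h^{n+1}$ is needed for the statement.

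A smaller point: in your argument for $A_{h,0}^{-1}\rho_{h,n}=O(k)$ you assert ``$O(\eta_h)+O(k)=O(k)$'', which is not legitimate since $\eta_h$ is independent of $k$. No consistency estimate is needed there at all: after multiplying by $A_{h,0}^{-1}$, the term $k\varphi_1(kA_{h,0})P_hu(t_n)$ is $O(k)$ directly, the $C_hg$ terms are $O(k)$ by (H2c), the $f$ terms are $O(k)$ by (H1b) and the boundedness of the stages, and $P_hu(t_{n+1})-P_hu(t_n)=O(k)$ by (A5), which is exactly how the paper concludes.
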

\begin{proof}
Firstly we notice that now the stages $\bar{K}_{h,i}^n$ in (\ref{Khb})
are also uniformly bounded because, as the nodes are different,
$$ke^{(c_i-c_j)k A_{h,0}}C_h g(t_n+c_jk)=\frac{1}{(c_i-c_j)}[(c_i-c_j)k A_{h,0}e^{(c_i-c_j)k A_{h,0}}]A_{h,0}^{-1}C_h g(t_n+c_j k).$$
Then, the bracket is bounded because of (\ref{acotses}) and the last factor because of (H2c). With the same argument, as $c_i\neq 1$, $\bar{U}_h^{n+1}$ in (\ref{Uhb})
is also uniformly bounded, which implies that $\rho_{h,n}$ is just $O(1)$. On the other hand, by using (\ref{recurf}) in the first term of (\ref{Uhb}),
\begin{eqnarray}
A_{h,0}^{-1} \rho_{h,n+1}&=&A_{h,0}^{-1}\bigg[ P_h u(t_n)+k A_{h,0} \varphi_1(k A_{h,0}) P_h u(t_n)
\nonumber \\
&&+k \sum_{i=1}^s b_i e^{(1-c_i)k A_{h,0}}[f(t_n+c_i k,\bar{K}_{h,i}^n)+C_h g(t_n+c_i k)]-P_h u(t_n)+O(k)\bigg], \nonumber
\end{eqnarray}
which again is $O(k)$ because of (H2c).
\end{proof}
Now, with the same summation-by-parts argument as in \cite{acrnl}, the following result follows for the global error.
\begin{theorem}
\label{teorcaglobal2}
Under the same hypotheses of Theorem \ref{teorcalocal2}, if $u\in C^2([0,T],X)$ and (\ref{spp}) holds, then
$e_{h,n}=U_h^n-P_h u(t_n)=O(1)$.
\end{theorem}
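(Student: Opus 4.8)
The plan is to mimic the summation-by-parts argument already invoked for Theorem \ref{teorcaglobal}, now transporting the weaker local estimates of Theorem \ref{teorcalocal2} through the error recursion. First I would write the global error recursion in the standard way: subtracting the exact-solution-seeded step $\bar U_h^{n+1}$ from the numerical step $U_h^{n+1}$, and using hypothesis (H3) (Lipschitz bound on $f$ near the solution) together with (H1a) ($\|e^{tA_{h,0}}\|_h\le 1$), one gets, after the usual discrete Gronwall bookkeeping on the internal stages,
\begin{eqnarray*}
e_{h,n+1}=e^{kA_{h,0}}e_{h,n}+\rho_{h,n+1}+O(k)\,\|e_{h,n}\|_h,
\end{eqnarray*}
hence by iteration
\begin{eqnarray*}
e_{h,n}=\sum_{r=1}^{n}e^{(n-r)kA_{h,0}}\rho_{h,r}+(\textrm{Gronwall-absorbed terms}),
\end{eqnarray*}
with a constant that is uniform in $h$ and $k$ by (H3).

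The crucial point is how to handle $\sum_{r}e^{(n-r)kA_{h,0}}\rho_{h,r}$. Bounding each $\rho_{h,r}$ directly is useless, since Theorem \ref{teorcalocal2} only gives $\rho_{h,r}=O(1)$, so a naive bound would give $O(nk)=O(T)=O(1)$ — which is in fact exactly the conclusion we are claiming, so at first glance it looks almost immediate. The subtlety is that the discrete Gronwall step above introduced a factor $A_{h,0}^{-1}$ nowhere, so we should instead exploit the refined estimate $A_{h,0}^{-1}\rho_{h,r}=O(k)$: writing $\rho_{h,r}=A_{h,0}(A_{h,0}^{-1}\rho_{h,r})$ and summing by parts in $r$, the telescoping produces boundary terms $e^{(n-1)kA_{h,0}}A_{h,0}(A_{h,0}^{-1}\rho_{h,1})$ and $A_{h,0}(A_{h,0}^{-1}\rho_{h,n})$ together with a sum $\sum_r kA_{h,0}e^{(n-r)kA_{h,0}}\cdot\frac{1}{k}\Delta_r(A_{h,0}^{-1}\rho_{h,r})$; the first boundary term is $O(k)$ times $\|kA_{h,0}e^{rkA_{h,0}}\|_h$-type factors bounded via (\ref{spp}) (or directly via $\|e^{tA_{h,0}}\|_h\le1$ for the $A_{h,0}A_{h,0}^{-1}\rho_{h,n}=\rho_{h,n}=O(1)$ piece, which already gives the claimed $O(1)$), and the remaining sum is controlled by (\ref{spp}) applied to $\|kA_{h,0}\sum e^{rkA_{h,0}}\|_h\le C$ against the increments of $A_{h,0}^{-1}\rho_{h,r}$, which under $u\in C^2([0,T],X)$ vary smoothly enough in $r$ that the increments contribute $O(k^2)$ each and hence $O(k)$ after summation. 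In other words the whole defect sum collapses to $O(1)$ coming from the endpoint term $\rho_{h,n}$ and everything else is genuinely small.

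A second ingredient is that the argument requires the internal stage errors $\bar K_{h,i}^n-K_{h,i}^n$ to be controllable by $\|e_{h,n}\|_h$ uniformly, which is where $c_1<c_2<\dots<c_s$ and the boundedness of $\bar K_{h,i}^n$ from the proof of Theorem \ref{teorcalocal2} (via (\ref{acotses}) and (H2c)) enter: the stage recursion (\ref{Khb}) has the same structure as (\ref{lawsonca}), so by induction on $i$ one gets $\|\bar K_{h,i}^n-K_{h,i}^n\|_h\le C\|e_{h,n}\|_h$ using only (H1a), (H3) and the fact that all arguments of the exponentials are nonnegative — no $A_{h,0}^{-1}$ needed here since the troublesome $C_hg$ terms cancel in the difference. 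Assembling these pieces: $e_{h,n}=O(1)$ from the endpoint defect, with the discrete Gronwall constant depending on the Lipschitz bound of (H3) and on $T$.

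The hard part, I expect, is not the structure but making the summation-by-parts step fully rigorous with the exact regularity quoted ($u\in C^2([0,T],X)$ only, not more): one must verify that the increment $A_{h,0}^{-1}\rho_{h,r+1}-A_{h,0}^{-1}\rho_{h,r}$ is $O(k^2)$ uniformly in $h$, which means revisiting the expansion inside the proof of Theorem \ref{teorcalocal2} and checking that the $O(k)$ term there is of the form $k\,\Phi(t_r)+O(k^2)$ with $\Phi$ a fixed (in $h$) Lipschitz-in-$t$ function built from $P_hu$, $P_h\dot u$, $C_hg$ and $A_{h,0}^{-1}C_hg$ — all of which are uniformly bounded and uniformly Lipschitz in $t$ by (H2c) and the $C^2$ regularity — so that the telescoped difference of consecutive defects is indeed $O(k^2)$. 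Once that uniform-in-$h$ smoothness of the defect is in hand, the (\ref{spp}) bound does the rest and the conclusion $e_{h,n}=O(1)$ follows; as the theorem merely asserts this (deliberately pessimistic) order, no sharper tracking is needed.
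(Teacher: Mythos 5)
Your proposal is correct and follows essentially the same route the paper intends: the paper gives no details for this theorem and simply invokes the summation-by-parts argument of \cite{acrnl}, combining the error recursion (where the $C_hg$ terms cancel and (H1a), (H3) give the stage bounds), the two local estimates $\rho_{h,n}=O(1)$ and $A_{h,0}^{-1}\rho_{h,n}=O(k)$ from Theorem \ref{teorcalocal2}, the bound (\ref{spp}), and the $C^2$ regularity of $u$ precisely to make the increments of $A_{h,0}^{-1}\rho_{h,r}$ of size $O(k^2)$, exactly as you outline. One correction to your aside: since each $\rho_{h,r}$ is only $O(1)$, the naive bound on $\sum_r e^{(n-r)kA_{h,0}}\rho_{h,r}$ is $O(n)=O(T/k)$, not $O(nk)$, so the conclusion is not ``almost immediate'' --- the summation by parts (endpoint term $\rho_{h,n}=O(1)$ plus $O(k^2)$ increments summing to $O(1)$) is genuinely needed, which is what you then carry out.
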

Let us now consider the case in which some $c_i=1$.
\begin{theorem}
\label{teorcalocal3}
Under hypotheses (A1)-(A5) and (H1)-(H3), assuming also that $\partial u\neq 0$, $c_1<c_2<\dots<c_s$, $c_i=1$ for some $i\in \{1,\dots,s\}$ and (\ref{acotses}),
it happens that
$$\rho_{h,n}=O(1+k\|C_h\|), \quad A_{h,0}^{-1} \rho_{h,n}=O(k),$$
where the constant in Landau notation is independent of $k$ and $h$.
\end{theorem}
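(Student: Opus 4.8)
The plan is to mirror the proof of Theorem~\ref{teorcalocal2} and to pinpoint the single place where the presence of a node $c_i=1$ weakens the estimate. As a first step I would check that the internal stages $\bar{K}_{h,i}^n$ in (\ref{Khb}) are still uniformly bounded in $h$: this part is unchanged, because the nodes are strictly increasing, so $c_i-c_j>0$ for $j<i$ and each boundary contribution can be written as
\[
k\,e^{(c_i-c_j)kA_{h,0}}C_h g(t_n+c_jk)=\frac{1}{c_i-c_j}\,\big[(c_i-c_j)kA_{h,0}e^{(c_i-c_j)kA_{h,0}}\big]\,A_{h,0}^{-1}C_h g(t_n+c_jk),
\]
which is $O(1)$ by (\ref{acotses}) and (H2c); together with (H1a), (A4)--(A5) and induction on $i$ this gives $\bar{K}_{h,i}^n=O(1)$.

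The new phenomenon shows up in $\bar{U}_h^{n+1}$ of (\ref{Uhb}). I would split the sum over $i$ into the indices with $c_i\neq1$ and those with $c_i=1$. For $c_i\neq1$ the same factorisation as above, now with the factor $(1-c_i)kA_{h,0}e^{(1-c_i)kA_{h,0}}$, shows that $k\,b_i\,e^{(1-c_i)kA_{h,0}}C_h g(t_n+c_ik)=O(1)$, while the source terms $k\,b_i\,e^{(1-c_i)kA_{h,0}}f(t_n+c_ik,\bar{K}_{h,i}^n)$ are $O(k)$ and $e^{kA_{h,0}}P_hu(t_n)=O(1)$ by (H1a). For an index with $c_i=1$, however, $e^{(1-c_i)kA_{h,0}}=I$, so the corresponding summand is exactly $k\,b_i\,[\,C_h g(t_n+k)+f(t_n+k,\bar{K}_{h,i}^n)\,]$; here no smoothing factor is available, and this term is genuinely of size $O(k\|C_h\|)$ (plus $O(k)$ from the source). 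Collecting everything, $\bar{U}_h^{n+1}=O(1+k\|C_h\|)$, and since $\|P_hu(t_{n+1})\|_h\le\|u(t_{n+1})\|=O(1)$ by (A5), we obtain $\rho_{h,n}=O(1+k\|C_h\|)$.

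For the bound on $A_{h,0}^{-1}\rho_{h,n}$ I would proceed as in Theorem~\ref{teorcalocal2}: expand $e^{kA_{h,0}}P_hu(t_n)=P_hu(t_n)+kA_{h,0}\varphi_1(kA_{h,0})P_hu(t_n)$ via (\ref{recurf}), so that $A_{h,0}^{-1}\rho_{h,n+1}$ equals $k\varphi_1(kA_{h,0})P_hu(t_n)$, plus $k\sum_{i=1}^s b_i e^{(1-c_i)kA_{h,0}}A_{h,0}^{-1}[C_h g(t_n+c_ik)+f(t_n+c_ik,\bar{K}_{h,i}^n)]$, minus $A_{h,0}^{-1}P_h(u(t_{n+1})-u(t_n))$. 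The first piece is $O(k)$ by (H1a); in the sum, $A_{h,0}^{-1}C_h$ is bounded by (H2c) and $A_{h,0}^{-1}$ by (H1b) while $e^{(1-c_i)kA_{h,0}}$ stays bounded (in particular it is the identity for the degenerate index), so it is $O(k)$; and the last piece is $O(k)$ because $u\in C^1([0,T],X)$ and $\|A_{h,0}^{-1}P_h\dot{u}(s)\|_h\le C\|\dot{u}(s)\|$ by (H1b). Hence $A_{h,0}^{-1}\rho_{h,n}=O(k)$. The only real subtlety is that the bound for $\rho_{h,n}$ cannot be improved: unlike in Theorem~\ref{teorcalocal2}, the summand attached to the node $c_i=1$ carries no regularising factor $\tau A_{h,0}e^{\tau A_{h,0}}$, so the unbounded growth of $\|C_h\|$ as $h\to0$ passes directly into the local error --- which is exactly the difficulty that Section~4 is designed to circumvent.
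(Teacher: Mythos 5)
Your proposal is correct and follows essentially the same route as the paper: the paper's proof simply repeats the argument of Theorem \ref{teorcalocal2} (stages bounded via the factorisation using (\ref{acotses}) and (H2c)), observes that the summand with $c_i=1$ has no smoothing factor and can only be bounded by $k\|C_h\|$, and recovers $A_{h,0}^{-1}\rho_{h,n}=O(k)$ from (H2c) exactly as you do. Your explicit splitting of the sum over nodes and the term-by-term bookkeeping for $A_{h,0}^{-1}\rho_{h,n}$ just spell out what the paper leaves implicit.
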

\begin{proof}
The proof is the same as that of Theorem \ref{teorcalocal2}, with the difference that now one of the terms in (\ref{Uhb}) can just be bounded by $k \|C_h\|$ because $c_i=1$ for some $i$.
However, the result for $ A_{h,0}^{-1} \rho_{h,n}$ is the same as in that theorem because of (H2c).
\end{proof}
As we will see in the numerical experiments, this explains that the local error behaves very badly, because it grows when $h$ diminishes. However, in spite of that, for fixed but small $h$, it behaves with order $1$ in $k$ because the term in $k\|C_h\|$ dominates.
The same happens with the global error, as the following theorem states by using again a summation-by-parts argument.
\begin{theorem}
\label{teorcaglobal3}
Under the same hypotheses of Theorem \ref{teorcalocal3}, if $u\in C^2([0,T],X)$ and (\ref{spp}) holds, then
$e_{h,n}=U_h^n-P_h u(t_n)=O(1+k\|C_h\|)$.
\end{theorem}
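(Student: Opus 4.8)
The plan is to mimic exactly the summation-by-parts argument already used for Theorem \ref{teorcaglobal} (and, in the same vein, for Strang splitting in \cite{acrnl}), now feeding it the weaker local-error estimates of Theorem \ref{teorcalocal3}. First I would write the standard error recursion obtained by subtracting the exact-data and numerical-data versions of one Lawson step: since $f$ is uniformly Lipschitz near the solution by (H3), one gets, for the stage differences $K_{h,i}^n-\bar K_{h,i}^n$ and for $e_{h,n+1}$, a relation of the form
\begin{eqnarray*}
e_{h,n+1}=e^{kA_{h,0}}e_{h,n}+k\,G_{h,n}(e_{h,n})+\rho_{h,n+1},
\end{eqnarray*}
where $G_{h,n}$ collects all the propagated-perturbation terms, each of the shape $b_i e^{(1-c_i)kA_{h,0}}$ times a Lipschitz factor times the stage perturbation; using (H1a) to bound all the exponentials by $1$ one gets $\|G_{h,n}(e_{h,n})\|_h\le L\|e_{h,n}\|_h$ for a constant $L$ independent of $h,k$.

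Next I would unfold this recursion from $e_{h,0}=0$:
\begin{eqnarray*}
e_{h,n}=\sum_{r=1}^{n} e^{(n-r)kA_{h,0}}\rho_{h,r}+k\sum_{r=0}^{n-1} e^{(n-1-r)kA_{h,0}}G_{h,r}(e_{h,r}).
\end{eqnarray*}
The second sum is harmless: by (H1a) and the Lipschitz bound it is $\le kL\sum_{r=0}^{n-1}\|e_{h,r}\|_h$, which a discrete Gronwall argument will absorb once the first sum is controlled. The crux is the first sum, $\sum_{r=1}^n e^{(n-r)kA_{h,0}}\rho_{h,r}$. Here one cannot simply use $\rho_{h,r}=O(1+k\|C_h\|)$ termwise — that would give $O(n)=O(1/k)$ blow-up. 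Instead, exactly as in Theorem \ref{teorcaglobal}, one splits $\rho_{h,r}=A_{h,0}(A_{h,0}^{-1}\rho_{h,r})$ and writes $A_{h,0}^{-1}\rho_{h,r}=\psi_{h}(t_{r-1})-\psi_{h}(t_r)+(\text{smaller})$ for a suitable smooth grid function $\psi_h$ of size $O(1+k\|C_h\|)/k$ so that the leading part telescopes; Abel summation then converts $\sum_r e^{(n-r)kA_{h,0}}A_{h,0}(\psi_h(t_{r-1})-\psi_h(t_r))$ into boundary terms plus $\sum_r \big(kA_{h,0}e^{(n-r)kA_{h,0}}\big)(\text{difference quotients of }\psi_h)$, and the operator bound (\ref{spp}) on $\|kA_{h,0}\sum_r e^{rkA_{h,0}}\|_h$ is precisely what makes the resulting sum $O(1+k\|C_h\|)$ rather than $O((1+k\|C_h\|)/k)$. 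The regularity assumption $u\in C^2([0,T],X)$ is what guarantees the difference quotients of $\psi_h$ stay bounded and that the discarded remainders are genuinely smaller.

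Finally I would combine: the first sum is $O(1+k\|C_h\|)$, the Gronwall step propagates this to $e_{h,n}=O(1+k\|C_h\|)$ uniformly for $0\le nk\le T$, which is the claim. The step I expect to be the real obstacle is the construction and bookkeeping of the telescoping decomposition of $A_{h,0}^{-1}\rho_{h,r}$: one must extract from the proof of Theorem \ref{teorcalocal3} the explicit leading term of $A_{h,0}^{-1}\rho_{h,r}$ (it involves $P_hu(t_r)-A_{h,0}^{-1}P_h\dot u(t_r)+A_{h,0}^{-1}f+A_{h,0}^{-1}C_hg$-type expressions, as in (\ref{formula})), verify it is the forward difference of a $C^1$-in-$t$ grid function up to an $O(k^2)+O(k^2\|C_h\|)$ remainder, and check that every surviving non-telescoping piece is dominated — after applying (\ref{spp}) — by $1+k\|C_h\|$. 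Everything else is the routine Lawson-method manipulation with (H1a), (H2c) and discrete Gronwall that has already appeared several times above; since the hypotheses here are a strict subset of the machinery used for Theorem \ref{teorcaglobal}, no genuinely new ingredient beyond careful tracking of the $\|C_h\|$ factor is needed.
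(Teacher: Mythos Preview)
Your plan is correct and matches the paper's own (one-line) proof, which simply invokes the same summation-by-parts argument as for Theorems~\ref{teorcaglobal} and~\ref{teorcaglobal2}. One inaccuracy worth fixing: by Theorem~\ref{teorcalocal3} you have $A_{h,0}^{-1}\rho_{h,r}=O(k)$ exactly as in Theorem~\ref{teorcalocal2}, so the auxiliary function $\psi_h$ in your telescoping decomposition is $O(1)$, not $O((1+k\|C_h\|)/k)$; the extra factor $k\|C_h\|$ does not propagate through the Abel-summed bulk at all but enters only through the single boundary term $\rho_{h,n}$ (the $r=n$ summand, which carries no smoothing factor $e^{skA_{h,0}}$ with $s\ge 1$), while the remaining sum is $O(1)$ just as in Theorem~\ref{teorcaglobal2}.
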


\section{Suggested approach: Discretizing firstly in time and then in space}

When discretizing firstly in time, some suggestion must be given for the exponential-type operators which turn up in Lawson formulas (\ref{etapas})-(\ref{lawson}). If $u(t_n)$  vanished at the boundary, it could seem natural to substitute those exponential operators by the $C_0$-semigroup $e^{\tau A_0}$ for suitable scalar values $\tau$. However, that may also lead to order reduction since the solution of
$$\dot{u}(\tau)=A_0 u(\tau), \quad u(0)=u_0,$$
cannot be accurately enough approximated by an expansion of the form
$$u_0+\tau A_0 u_0+\frac{\tau^2}{2}A_0^2 u_0+\cdots$$
unless $u_0\in D(A_0^l)$ for a high enough $l$. Besides, as $u(t_n)$ does not vanish in general,
suitable initial boundary value problems must be considered with the appropiate boundary values. We will consider three different cases depending on whether we want to achieve local order $2$, $3$ or $4$. For local order $2$ and Dirichlet boundary conditions, those boundaries can be calculated directly in terms of the data $f$ and $g$ of the problem (\ref{laibvp}). For the same order, but with Neumann/Robin boundary conditions, the approximation at the boundary given by the space discretization of the problem must be used. For higher orders, we will have to resort to numerical differentiation to calculate those boundaries.

From now on, we will assume that the coefficients of Butcher tableau satisfy the following standard equalities:
\begin{eqnarray}
\sum_{i=1}^s b_i=1, \qquad \sum_{j=1}^{i-1} a_{ij}=c_i, \,\, i=1,\dots,s. \label{cscond}
\end{eqnarray}

\subsection{Searching for local order $2$}
For the stages, starting from the continuous approximation $u_n$ at $t=t_n$, we consider recursively
\begin{eqnarray}
K_{n,i}&=&v_n(c_i k)+k\sum_{j=1}^{i-1} a_{ij} w_{n,j}((c_i-c_j)k), \quad i=1,\dots,s,\label{etapast}
\end{eqnarray}
where
\begin{eqnarray}
\left\{ \begin{array}{rcl} \dot{v}_n(s)&=& A v_n(s), \\ v_n(0)&=&u_n, \\ \partial v_n(s)&=&\partial u(t_n), \end{array}\right. \quad \left\{ \begin{array}{rcl} \dot{w}_{n,j}(s)&=& A w_{n,j}(s), \\ w_{n,j}(0)&=&f(t_n+c_jk,K_{n,j}), \\ \partial w_{n,j}(s)&=&0, \end{array}\right. \label{vwetapast}
\end{eqnarray}
Then, we suggest as the numerical approximation at the next step
\begin{eqnarray}
u_{n+1}=\tilde{v}_n(k)+k\sum_{i=1}^s b_i \tilde{w}_{n,i}((1-c_i)k), \label{unt}
\end{eqnarray}
where
\begin{eqnarray}
\left\{ \begin{array}{rcl} \dot{\tilde{v}}_n(s)&=& A \tilde{v}_n(s), \\ \tilde{v}_n(0)&=&u_n, \\ \partial \tilde{v}_n(s)&=&\partial [u(t_n)+s A u(t_n)], \end{array}\right. \quad \left\{ \begin{array}{rcl} \dot{\tilde{w}}_{n,j}(s)&=& A \tilde{w}_{n,j}(s), \\ \tilde{w}_{n,j}(0)&=&f(t_n+c_jk,K_{n,j}), \\ \partial \tilde{w}_{n,j}(s)&=&\partial f(t_n,u(t_n)). \end{array}\right. \label{vwunt}
\end{eqnarray}
After discretizing (\ref{vwetapast}) and (\ref{vwunt}) in space, the following systems arise when starting from the discrete numerical approximation $U_{n,h}$ at the previous step, and denoting by $K_{n,h,j}$ to the the discretized stages,
\begin{eqnarray}
&&\left\{ \begin{array}{rcl} \dot{V}_{n,h}(s)&=& A_{h,0} V_{n,h}(s)+C_h \partial u(t_n), \\ V_{n,h}(0)&=&U_{n,h},  \end{array}\right.  \left\{ \begin{array}{rcl} \dot{W}_{n,h,j}(s)&=& A_{h,0} W_{n,h,j}(s), \\ W_{n,h,j}(0)&=&f(t_n+c_jk,K_{n,h,j}).  \end{array}\right. \nonumber \\
&&\left\{ \begin{array}{rcl} \dot{\tilde{V}}_{n,h}(s)&=& A_{h,0} \tilde{V}_{n,h}(s)+C_h [\partial u(t_n)+s A u(t_n)], \\ \tilde{V}_{n,h}(0)&=&U_{n,h},  \end{array}\right.  \nonumber \\
&&\left\{ \begin{array}{rcl} \dot{\tilde{W}}_{n,h,j}(s)&=& A_{h,0} \tilde{W}_{n,h,j}(s)+C_h \partial f(t_n,u(t_n)), \\ \tilde{W}_{n,h,j}(0)&=&f(t_n+c_jk,K_{n,h,j}).  \end{array}\right. \label{discstages}
\end{eqnarray}
By using the variation-of-constants formula and the definition of $\varphi_j$ in (\ref{varphi}), we  have
\begin{eqnarray}
V_{n,h}(s)&=&e^{s A_{h,0}}U_{n,h}+\int_0^s e^{(s-\tau)A_{h,0}} C_h \partial u(t_n)d \tau=e^{s A_{h,0}}U_{n,h}+s \varphi_1(s A_{h,0}) C_h \partial u(t_n), \nonumber \\
\tilde{V}_{n,h}(s)&=&e^{s A_{h,0}}U_{n,h}+\int_0^s e^{(s-\tau)A_{h,0}} C_h [\partial u(t_n)+\tau A u(t_n)]d \tau \nonumber \\
&=&e^{s A_{h,0}}U_{n,h}+s \varphi_1(s A_{h,0}) C_h \partial u(t_n)+s^2 \varphi_2(s A_{h,0}) C_h \partial A u(t_n) , \nonumber \\
\tilde{W}_{n,h,j}(s)&=&e^{s A_{h,0}}f(t_n+c_j k, K_{n,h,j}) +s \varphi_1(s A_{h,0}) C_h \partial f(t_n,u(t_n)). \nonumber
\end{eqnarray}
Therefore, considering (\ref{etapast}) and (\ref{unt}), the full discretized numerical solution after one step  is calculated recursively through the following formulas
\begin{eqnarray}
K_{n,h,i}&=&e^{c_i k A_{h,0}}U_{n,h}+c_i k \varphi_1(c_i k A_{h,0}) C_h \partial u(t_n)+k\sum_{j=1}^{i-1} a_{ij} e^{(c_i-c_j)k A_{h,0}}f(t_n+c_j k, K_{n,h,j}), \nonumber \\
U_{n+1,h}&=&e^{k A_{h,0}}U_{n,h}+k \varphi_1(k A_{h,0}) C_h \partial u(t_n)+k^2 \varphi_2(k A_{h,0}) C_h \partial A u(t_n) \nonumber \\
&&+k\sum_{i=1}^s b_i  \bigg[e^{(1-c_i)k A_{h,0}}f(t_n+c_i k, K_{n,h,i}) \nonumber \\
&& \hspace{2cm} +(1-c_i)k \varphi_1((1-c_i) kA_{h,0}) C_h \partial f(t_n,u(t_n))\bigg]. \label{fd2}
\end{eqnarray}
\begin{remark} Notice that the three terms on the boundary $\partial u(t_n)$, $\partial A u(t_n)$ and $\partial f(t_n,u(t_n))$, are necessary to consider this approximation. However, as
$$\partial u(t_n)=g(t_n), \quad \partial A u(t_n)=\dot{g}(t_n)-\partial f(t_n,u(t_n)),$$
all reduces to calculate $\partial f(t_n,u(t_n))$. In the same way as it was stated in \cite{acrnl,EO2}, with Dirichlet boundary conditions, when the nonlinear term is given by
(\ref{nonlinearterm}), that term can be calculated exactly as
$$\partial f(t_n,u(t_n))=\phi( g(t_n)) + \partial h(t_n).$$
 With Neumann or Robin boundary conditions
\begin{eqnarray}
\partial u(t)=\alpha u(t)|_{\partial \Omega}+\beta \partial_n u(t)|_{\partial \Omega}=g(t), \quad \beta \neq 0,
\label{neurob}
\end{eqnarray}
as
\begin{eqnarray}
\partial f(t_n,u(t_n))= \alpha[\phi(u(t_n)|_{\partial \Omega})+h(t_n)|_{\partial \Omega}]
+\beta [ \phi'(u(t_n)|_{\partial \Omega})\partial_n
u(t_n)|_{\partial \Omega}+\partial_n h(t_n)|_{\partial \Omega}],
\nonumber
\end{eqnarray}
$u(t_n)|_{\partial \Omega}$ can be substituted by the numerical approximation which the space discretization of the problem necessarily gives in this case and $\partial_n
u(t_n)|_{\partial \Omega}$ by the result of solving from (\ref{neurob}). In any case, the error which comes from the approximation of the boundary terms in (\ref{fd2}) is given in Table \ref{tabla}.
\label{rembc}
\end{remark}
\begin{remark}
We also notice that, when $\partial u(t)= \partial A u(t)=0$, because of (\ref{laibvp}), it necessarily happens that $\partial f(t,u(t))=0$. Besides, in this case, (\ref{fd2}) is equivalent to (\ref{lawsonca}). In this way, through Theorems \ref{teorsalocal2} and \ref{teorsalocalfd2}, we will be implicitly proving local order $2$ for the classical approach in such a case.
\end{remark}
\subsubsection{Local error of the time semidiscretization}
In order to define the local error of the time semidiscretization, we consider
\begin{eqnarray}
\bar{K}_{n,i}&=&\bar{v}_n(c_i k)+k\sum_{j=1}^{i-1} a_{ij} \bar{w}_{n,j}((c_i-c_j)k), \quad i=1,\dots,s,
\nonumber \\
\bar{u}_{n+1}&=&\bar{\tilde{v}}_n(k)+k\sum_{i=1}^s b_i \bar{\tilde{w}}_{n,i}((1-c_i)k),
\label{ub}
\end{eqnarray}
where $\bar{v}_n$ and $\bar{\tilde{v}}_n$ satisfy the same equation and boundary conditions as $v_n$ and $\tilde{v}_n$, but starting from $u(t_n)$ instead of $u_n$. The same happens with $\bar{w}_{n,i}$, $\bar{\tilde{w}}_{n,i}$ and $w_{n,i}$, $\tilde{w}_{n,i}$ with the difference that the initial condition is now $f(t_n+c_i k, \bar{K}_{n,i})$ instead of $f(t_n+c_i k, K_{n,i})$. Then, for the local error $\rho_n=\bar{u}_{n+1}-u(t_{n+1})$, we have the following result:
\begin{theorem}
\label{teorsalocal2}
Under hypotheses (A1)-(A5) and (H1)-(H3), assuming also that, for every $t\in [0,T]$, $f(t,u(t))\in D(A)$, and
\begin{eqnarray}
Af(t,u(t))\in C([0,T],X), \quad u\in C([0,T], D(A^2))\cap C^2([0,T],X),
\label{regloc2}
\end{eqnarray}
it follows that $\rho_n=O(k^2)$. Moreover, if $f\in C^2([0,T]\times X,X), u\in C^3([0,T],X)$, there exists a constant $C$ such that the following bound holds
\begin{eqnarray}
\|A_0^{-1} f_u(t,u(t))A_0 w\|\le C, \mbox{ for every } w\in D(A_0) \mbox{ and } t\in [0,T], \label{cspc}
\end{eqnarray}
and the Runge-Kutta tableau corresponds to a method of classical order $\ge 2$, it follows that $A_{0}^{-1} \rho_n=O(k^3)$.
\end{theorem}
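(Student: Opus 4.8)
The plan is to write $\bar u_{n+1}$ as an explicit combination of the operators $e^{sA_0}$ and $\varphi_j(sA_0)$ acting on $u(t_n)$ and on the boundary data, to expand it in powers of $k$ by means of the recursion (\ref{recurf}) and Taylor's formula, and to compare it with the Taylor expansion of the exact solution obtained from the differential equation in (\ref{laibvp}), namely $u(t_{n+1})=u(t_n)+k\dot u(t_n)+\frac{k^2}{2}\ddot u(t_n)+O(k^3)$ with $\dot u=Au+f$ and $\ddot u=A^2u+Af+f_t+f_u(Au+f)$.

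First I would rewrite the problems defining $\bar v_n,\bar w_{n,j},\bar{\tilde{v}}_n,\bar{\tilde{w}}_{n,i}$ in semigroup form, exactly as the excerpt does for the semidiscrete systems in (\ref{discstages}) (the $w$-problems, whose initial data need not vanish at the boundary, being understood through the generated semigroup, consistently with $W_{n,h,j}(s)=e^{sA_{h,0}}f(\cdot)$). This is possible because, $A_0$ having negative type, it is boundedly invertible, and because $K(0)v$ solves $Ax=0,\ \partial x=v$. The decisive feature of these formulas --- and the reason for the specific boundary corrections $\partial u(t_n)$, $\partial Au(t_n)$, $\partial f(t_n,u(t_n))$ built into (\ref{vwetapast})--(\ref{vwunt}) --- is that $u(t_n)-K(0)\partial u(t_n)$, $Au(t_n)-K(0)\partial Au(t_n)$ and $f(t_n,u(t_n))-K(0)\partial f(t_n,u(t_n))$ all lie in $D(A_0)$, with $A_0$-images $Au(t_n)$, $A^2u(t_n)$ and $Af(t_n,u(t_n))$ respectively; this is exactly where (\ref{regloc2}) is used in full ($u(t)\in D(A^2)$, $f(t,u(t))\in D(A)$, $Af(t,u(t))$ continuous). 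Splitting, where needed, $f(t_n+c_ik,\bar K_{n,i})$ into $f(t_n,u(t_n))$ plus an $O(k)$ remainder, and combining each raw semigroup term with its boundary correction, one gets for instance $\bar{\tilde{v}}_n(s)=u(t_n)+sAu(t_n)+s^2\varphi_2(sA_0)A^2u(t_n)$, and more generally this lets one trade, in each block of the expansion, one power of $A_0$ for one power of $k$ through the bounded operators $A_0^{-1}(e^{sA_0}-I)=s\varphi_1(sA_0)$ and $A_0^{-1}(\varphi_1(sA_0)-I)=s\varphi_2(sA_0)$.

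Next, an induction on $i$ using the uniform boundedness of $\{e^{sA_0}\}$ (from (A2)), the bound (\ref{stationaryoperator}) on $K(0)$ and the local Lipschitz continuity of $f$ (from (A4)) gives $\bar K_{n,i}=u(t_n)+O(k)$, which I would refine --- using $\sum_{j<i}a_{ij}=c_i$ from (\ref{cscond}) --- to $\bar K_{n,i}=u(t_n)+c_ik\,\dot u(t_n)+A_0\zeta_{n,i}+O(k^2)$, where $\zeta_{n,i}\in D(A_0)$ with $\|\zeta_{n,i}\|=O(k^2)$ (for example $\zeta_{n,i}$ collects terms such as $c_i^2k^2\varphi_2(c_ikA_0)Au(t_n)$). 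In the norm of $X$ the summand $A_0\zeta_{n,i}$ is only $o(k)$, not $O(k^2)$, but the explicit factor $A_0$ in front of an $O(k^2)$ element of $D(A_0)$ is precisely what makes it harmless below. Substituting all this into (\ref{ub}), expanding the exponentials and $\varphi_j$'s and --- for the second assertion --- Taylor-expanding $f(t_n+c_ik,\bar K_{n,i})$ around $(t_n,u(t_n))$ (with $f\in C^2$, $u\in C^3$), I would then match term by term. The $O(1)$ and $O(k)$ terms cancel because $\sum_ib_i=1$, which already yields $\rho_n=O(k^2)$. For $A_0^{-1}\rho_n=O(k^3)$, the remaining genuine $O(k^2)$-in-$X$ contributions, which add up to $\frac{k^2}{2}\big(A^2u(t_n)+Af(t_n,u(t_n))+f_t+f_u\dot u\big)=\frac{k^2}{2}\ddot u(t_n)$, cancel against the exact solution precisely because the method has classical order $\ge2$, i.e. $\sum_ib_ic_i=\frac12$ (equivalently $\sum_ib_i(1-c_i)=\frac12$). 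What then remains in $\rho_n$ is a finite sum of terms each of which, after the extraction of one power of $A_0$ as above, has the form $(e^{sA_0}-I)\chi$ or $A_0\varphi_j(sA_0)\chi$ with $\|\chi\|=O(k^2)$; applying $A_0^{-1}$ and using that $s\varphi_j(sA_0)$ is bounded turns each of them into $O(k^3)$.

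The only non-routine step --- and the one I expect to be the main obstacle --- is the nonlinear contribution. Terms in which $f_u(t_n,u(t_n))$ multiplies something of the shape $A_0w$ arise twice: in $f_u\dot u=f_uAu+f_uf$ inside $\ddot u$ (these cancel and need no special treatment) and, more delicately, inside $f_u(t_n,u(t_n))\,(\bar K_{n,i}-u(t_n))$ through the $A_0\zeta_{n,i}$ of the refined stage expansion. For the latter, $A_0^{-1}f_u(t_n,u(t_n))\,[A_0\zeta_{n,i}]$ cannot be controlled by the mere boundedness of $f_u$ without losing a power of $k$, and this is exactly where hypothesis (\ref{cspc}) enters, with $w=\zeta_{n,i}\in D(A_0)$: it gives $\|A_0^{-1}f_u(t_n,u(t_n))A_0\zeta_{n,i}\|\le C\|\zeta_{n,i}\|=O(k^2)$, so that the corresponding part of $A_0^{-1}\rho_n$ is indeed $O(k^3)$. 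Once this structural observation is secured, what is left is the elementary but lengthy bookkeeping needed to check that no other term survives at order $k^2$ after $A_0^{-1}$ has been applied.
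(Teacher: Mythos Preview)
Your proposal is correct and follows essentially the same route as the paper: both derive the closed-form expressions for $\bar v_n,\bar{\tilde v}_n,\bar w_{n,i},\bar{\tilde w}_{n,i}$ (the paper by citing Lemma~3.1 of \cite{acr2}, you via an equivalent $K(0)$ argument), obtain $\bar K_{n,i}=u(t_n)+O(k)$, expand $\bar u_{n+1}$ and compare with the Taylor series of $u(t_{n+1})$, and for the second assertion isolate the term $k\sum_i b_i A_0^{-1}[f(t_n+c_ik,\bar K_{n,i})-f(t_n,u(t_n))]$ as the one requiring hypothesis (\ref{cspc}). Your packaging of the stage remainder as $A_0\zeta_{n,i}$ with $\zeta_{n,i}\in D(A_0)$ and $\|\zeta_{n,i}\|=O(k^2)$ is equivalent to the paper's explicit use of $\varphi_1(c_ikA_0)-I=c_ikA_0\varphi_2(c_ikA_0)$ and $e^{(c_i-c_j)kA_0}-I=(c_i-c_j)kA_0\varphi_1((c_i-c_j)kA_0)$ inside $A_0^{-1}f_u(t_n,u(t_n))[\cdot]$.
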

\begin{proof} Using Lemma 3.1 in \cite{acr2},
\begin{eqnarray}
\bar{v}_n(s)&=&u(t_n)+s \varphi_1(s A_0) A u(t_n), \label{lemaartlin} \\
\bar{\tilde{v}}(s)&=&u(t_n)+s A u(t_n)+s^2 \varphi_2(s A_0) A^2 u(t_n), \nonumber \\
\bar{w}_{n,i}(s)&=& e^{s A_0} f(t_n+c_i k, \bar{K}_{n,i}), \nonumber \\
\bar{\tilde{w}}_{n,i}(s)&=& e^{s A_0}[f(t_n+c_i k, \bar{K}_{n,i})-f(t_n,u(t_n))]+f(t_n,u(t_n))+s \varphi_1(s A_0) A f(t_n,u(t_n)). \nonumber
\end{eqnarray}
Then,
\begin{eqnarray}
\bar{K}_{n,i}&=& u(t_n)+ c_i k \varphi_1(c_i k A_0) A u(t_n)+k\sum_{j=1}^{i-1} a_{ij} e^{(c_i-c_j)kA_0} f(t_n+c_j k, \bar{K}_{n,j})\label{Kb} \\
&=&u(t_n)+O(k), \qquad i=1,\dots,s, \label{acotet} \\
\bar{u}_{n+1}&=&u(t_n)+k A u(t_n)+ k^2 \varphi_2(k A_0) A^2 u(t_n) \nonumber \\
&&+k \sum_{i=1}^s b_i \bigg[e^{(1-c_i)k A_0}[f(t_n+c_i k,\bar{K}_{n,i})-f(t_n,u(t_n))]+f(t_n,u(t_n))\nonumber \\
&& \hspace{2cm} +(1-c_i)k \varphi_1((1-c_i)k A_0) A f(t_n,u(t_n))\bigg] \label{bu2} \\
&=& u(t_n)+k[Au(t_n)+f(t_n,u(t_n))]+O(k^2)= u(t_n)+k \dot{u}(t_n)+O(k^2), \nonumber
\end{eqnarray}
where, for the last line, (\ref{regloc2}), (A4) together with (\ref{acotet}) and the first condition of (\ref{cscond}) have been used. From this, the first result on the local error follows.

As for the second result, looking at the term in $k^2$ in $\bar{u}_{n+1}$ and using that $u\in C^3([0,T],X)$, we can notice that
\begin{eqnarray}
\lefteqn{A_0^{-1} \rho_{n+1}= k^3  (k A_0)^{-1}(\varphi_2(k A_0)-\frac{1}{2} I) A^2 u(t_n)+\frac{k^2}{2} A_0^{-1} A^2 u(t_n)} \nonumber \\
&&+k^2 \sum_{i=1}^s b_i(1-c_i) ((1-c_i)k A_0)^{-1}[e^{(1-c_i)k A_0}-I][f(t_n+c_i k,\bar{K}_{n,i})-f(t_n,u(t_n))]\nonumber \\
&&+k \sum_{i=1}^s b_i A_0^{-1} [f(t_n+c_i k,\bar{K}_{n,i})-f(t_n,u(t_n))] \label{forloc} \\
&&+k^3 \sum_{i=1}^s b_i(1-c_i)^2 ((1-c_i)k A_0)^{-1}(\varphi_1((1-c_i)k A_0)- I)A f(t_n,u(t_n)) \nonumber \\
&&+k^2 \sum_{i=1}^s b_i(1-c_i)A_0^{-1} A f(t_n,u(t_n))-\frac{k^2}{2} \ddot{u}(t_n)+O(k^3). \nonumber
\end{eqnarray}
Using (\ref{recurf}), (\ref{acotet}), (A4) and (\ref{regloc2}),  the first, third and fifth terms are $O(k^3)$. As for the fourth one, considering (\ref{Kb}), it can be written as
\begin{eqnarray}
&&k^2 \sum_{i=1}^s b_i A_0^{-1} f_u(t_n,u(t_n))[c_i \varphi_1(c_i k A_0) A u(t_n)+\sum_{j=1}^{i-1} a_{ij} e^{(c_i-c_j)k A_0} f(t_n+c_j k, \bar{K}_{n,j})] \nonumber \\
&& \hspace{0.5cm}+k^2 \sum_{i=1}^s b_i c_i A_0^{-1} f_t(t_n,u(t_n))+O(k^3) \nonumber \\
&&=k^3 \sum_{i=1}^s b_i A_0^{-1} f_u(t_n,u(t_n))A_0 c_i (k A_0)^{-1}[\varphi_1(c_i k A_0) -I] A u(t_n)\nonumber \\
&& \hspace{0.5cm}+k^2 (\sum_{i=1}^s b_i c_i) A_0^{-1} f_u(t_n,u(t_n))A u(t_n)\nonumber \\
&& \hspace{0.5cm}+k^3 \sum_{i=1}^s b_i\sum_{j=1}^{i-1} a_{ij} A_0^{-1} f_u(t_n,u(t_n))A_0 (k A_0)^{-1}[e^{(c_i-c_j)k A_0}-I] f(t_n, u(t_n))\nonumber \\
&& \hspace{0.5cm}+k^2 \sum_{i=1}^s b_i \sum_{j=1}^{i-1} a_{ij} A_0^{-1} f_u(t_n,u(t_n))f(t_n,u(t_n)) \nonumber \\
&& \hspace{0.5cm}+k^2 \sum_{i=1}^s b_i c_i A_0^{-1} f_t(t_n,u(t_n))+O(k^3) \nonumber \\
&&=\frac{k^2}{2} A_0^{-1}[f_u(t_n,u(t_n))\dot{u}(t_n)+f_t(t_n,u(t_n))]+O(k^3) \nonumber,
\end{eqnarray}
where, for the last equality, we have used (\ref{recurf}) again, (\ref{cspc}), the second condition in (\ref{cscond}) and the fact that $\sum_{i=1}^s b_i c_i=1/2$ due to the second order of the Butcher tableau. Inserting this in (\ref{forloc}) and simplifying notation,
$$A_0^{-1} \rho_{n+1}=\frac{k^2}{2}A_0^{-1} [A^2 u+f_u\dot{u}+f_t+A f-\ddot{u}]+O(k^3)=O(k^3),$$
where the differentiation of (\ref{laibvp}) with respect to time shows that term in bracket in the previous expression vanishes.
\end{proof}

\subsubsection{Local error of the full discretization}
We define the local error of the full discretization as
$\rho_{n+1,h}=\bar{U}_{n+1,h}-P_h u(t_{n+1})$, where $\bar{U}_{n+1,h}$ is defined as $U_{n+1,h}$ but starting from $P_h u(t_n)$. More precisely, in a similar way to the derivation of (\ref{fd2}), $\bar{U}_{n+1,h}$ is defined through some stages $\bar{K}_{n,i,h}$ in the following way:
\begin{eqnarray}
\bar{K}_{n,h,i}&=&\bar{V}_{n,h}(c_i k)+k \sum_{j=1}^{i-1} a_{ij} \bar{W}_{n,h,j}((c_i-c_j)k), \quad i=1,\dots,s,\nonumber \\
\bar{U}_{n+1,h}&=& \bar{\tilde{V}}_{n,h}(k)+k\sum_{i=1}^s b_i \bar{\tilde{W}}_{n,h,i}((1-c_i)k), \nonumber
\end{eqnarray}
where $\bar{V}_{n,h}$, $\bar{W}_{n,j,h}$, $\bar{\tilde{V}}_{n,h}$ and $\bar{\tilde{W}}_{n,j,h}$ are defined as in (\ref{discstages}), but changing $U_{n,h}$ by $P_h u(t_n)$ and $K_{n,h,i}$ by $\bar{K}_{n,h,i}$. We then have the following result for the full discretized local error:
\begin{theorem}Under the same hypotheses of the first part of Theorem \ref{teorsalocal2}, and assuming also that, for $t\in [0,T]$,
\begin{eqnarray}
A^l u(t)\in Z, \quad l=0,1,2, \qquad A f(t,u(t))\in Z,
\label{reg2}
\end{eqnarray}
it happens that $\rho_{n,h}=O(k^2+k \varepsilon_h)$ where the constant in Landau notation is independent of $k$ and $h$. Moreover, under the additional hypotheses of the second part of Theorem \ref{teorsalocal2}, together with the following condition which is related to (\ref{cspc}),
\begin{eqnarray}
\|A_{h,0}^{-1} f_u(t,P_h u(t))A_{h,0}\|_h \le C, \quad t\in [0,T], \quad h\le h_0,
\label{cspch}
\end{eqnarray}
it follows that
$A_{h,0}^{-1} \rho_{n,h}=O(k^3+k \eta_h+k^2 \epsilon_h)$.
\label{teorsalocalfd2}
\end{theorem}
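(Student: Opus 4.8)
I would reduce the full-discretization local error to the time-semidiscretization one controlled by Theorem~\ref{teorsalocal2}, by writing
\[
\rho_{n,h}=\bigl(\bar U_{n+1,h}-P_h\bar u_{n+1}\bigr)+P_h\rho_n .
\]
Since $\|P_h\|\le 1$, the first part of Theorem~\ref{teorsalocal2} gives $P_h\rho_n=O(k^2)$; for the refined bound I would rewrite $A_{h,0}^{-1}P_h\rho_n=R_hA_0^{-1}\rho_n=P_hA_0^{-1}\rho_n+(R_h-P_h)A_0^{-1}\rho_n$ using the elliptic projection relation (\ref{rh}) (note that $\rho_n\in Z$, hence $A_0^{-1}\rho_n\in Z$, by (\ref{reg2}) and (H2a)), so that the second part of Theorem~\ref{teorsalocal2} controls the first summand ($O(k^3)$) and (\ref{consistency}) the second ($O(k\eta_h)$). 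Everything then hinges on estimating $\bar U_{n+1,h}-P_h\bar u_{n+1}$, which I would do by mirroring term by term the expansions of $\bar u_{n+1}$ and $\bar K_{n,i}$ from the proof of Theorem~\ref{teorsalocal2}, the discrete objects being built from the closed forms for $\bar V_{n,h},\bar W_{n,h,j},\bar{\tilde V}_{n,h},\bar{\tilde W}_{n,h,j}$ recorded before (\ref{fd2}).

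\textbf{Transferring the continuous estimates.} Two mechanisms generate the spatial error. \emph{(i)} After using the recurrences (\ref{recurf}) to peel off one $\varphi_j$-operator --- keeping one factor so that the powers of $A_{h,0}$ hitting the consistency defect stay bounded, e.g.\ $e^{sA_{h,0}}P_hu(t_n)=P_hu(t_n)+s\varphi_1(sA_{h,0})[A_{h,0}P_hu(t_n)+C_h\partial u(t_n)]$ --- one meets the combination $A_{h,0}P_hw+C_h\partial w$, which by (\ref{rh}) equals $P_hAw+A_{h,0}(P_h-R_h)w$; the defect is $O(\varepsilon_h)$ in $Z$-norm by (\ref{consistency}) and (\ref{reg2}) and is always multiplied by a factor $O(k)$, so it costs $O(k\varepsilon_h)$. \emph{(ii)} Replacing $e^{\tau A_{h,0}}P_h$ and $\varphi_j(\tau A_{h,0})P_h$ by $P_he^{\tau A_0}$ and $P_h\varphi_j(\tau A_0)$ on elements of $Z$ costs $O(\eta_h)$ on $Z$ uniformly for $\tau\in[0,k]$, and, since these operators always appear multiplied by a power of $k$, contributes $O(k\eta_h)$; this estimate one gets in the standard way, by introducing the elliptic projection of the continuous quantity, writing the perturbed evolution equation it satisfies, and using (H1a) and (\ref{consistency}), as in \cite{acr1,acrnl}. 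The $\bar w$- and $\bar{\tilde w}$-type terms carry $f$ evaluated at the stages, closed with (\ref{fandph}) and the uniform Lipschitz bound of (H3), which produces a term $O(k\|\bar K_{n,h,j}-P_h\bar K_{n,j}\|)$ feeding an induction on $i$. Running this induction for the stages and then for the update gives $\bar K_{n,h,i}-P_h\bar K_{n,i}=O(k\varepsilon_h)$ and $\bar U_{n+1,h}-P_h\bar u_{n+1}=O(k\varepsilon_h)$, hence $\rho_{n,h}=O(k^2+k\varepsilon_h)$.

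\textbf{The refined estimate.} For $A_{h,0}^{-1}\rho_{n,h}$ I would repeat the computation proving the second part of Theorem~\ref{teorsalocal2}, now with $A_{h,0}^{-1}$ in front and the two mechanisms tracked one order more carefully: mechanism~(i), after the cancellation $A_{h,0}^{-1}A_{h,0}=I$, leaves a defect $(P_h-R_h)w$ ($\le\eta_h\|w\|_Z$) in front of $O(k)$ terms, giving $O(k\eta_h)$, plus subleading $O(k^2\varepsilon_h)$ contributions where a surviving $A_{h,0}$ hits the defect but with an extra power of $k$; mechanism~(ii) with the extra $A_{h,0}^{-1}$ again yields $O(k\eta_h)$. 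The new hypothesis (\ref{cspch}) is the discrete counterpart of (\ref{cspc}) and is used exactly where the latter was used, to bound $k\sum_ib_iA_{h,0}^{-1}[f(t_n+c_ik,\bar K_{n,h,i})-f(t_n,P_hu(t_n))]$ by $O(k^2)$ plus spatial remainders and collapse it using $\sum_ib_ic_i=1/2$ (from (\ref{cscond}) and the order-$\ge2$ assumption); then the time-differentiated form of (\ref{laibvp}) makes the $k^2$-coefficient of $A_{h,0}^{-1}\rho_{n,h}$ reduce, up to spatial remainders, to something of size $O(k\eta_h+k^2\varepsilon_h)$. Together with the time part this yields $A_{h,0}^{-1}\rho_{n,h}=O(k^3+k\eta_h+k^2\varepsilon_h)$.

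\textbf{Main obstacle.} The genuine difficulty is bookkeeping: at each term one must decide whether the consistency defect is hit by an $A_{h,0}$ (cost $\varepsilon_h$) or not (cost $\eta_h$), whether it is premultiplied by $k$ or by $k^2$, and make sure no power of $A_{h,0}$ survives uncontrolled --- which is why one keeps a $\varphi_j$ factor instead of fully expanding the exponentials. Obtaining the \emph{sharp} $A_{h,0}^{-1}$-estimate (with $k\eta_h$ and $k^2\varepsilon_h$, not the lossy $O(\eta_h)$ that results from applying the elliptic projection to $\bar{\tilde v}_n$ wholesale) forces one to keep explicit powers of $k$ and bounded $Z$-norms through the entire recursion; this is precisely what the stronger regularity, the order-$\ge2$ condition, (\ref{cspc}) and the new (\ref{cspch}) are there to supply.
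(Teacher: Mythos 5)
Your proposal follows the paper's proof essentially step for step: the same decomposition $\rho_{n+1,h}=(\bar U_{n+1,h}-P_h\bar u_{n+1})+P_h\rho_{n+1}$, the same perturbed-evolution/variation-of-constants estimates built on (\ref{rh}), (\ref{consistency}), (\ref{fandph}) and (H3) with an induction over the stages giving $\bar K_{n,h,i}-P_h\bar K_{n,i}=O(k\varepsilon_h)$, and the same treatment of $A_{h,0}^{-1}P_h\rho_{n+1}=R_hA_0^{-1}\rho_{n+1}$ with (\ref{cspch}) as the discrete surrogate of (\ref{cspc}). Two small points to make explicit: the claimed $O(k\eta_h)$ for $(R_h-P_h)A_0^{-1}\rho_{n+1}$ needs $\|A_0^{-1}\rho_{n+1}\|_Z=O(k)$, which the paper obtains from $u(t_{n+1})=u(t_n)+k\dot u(t_n^*)$ with $\dot u(t_n^*)=Au(t_n^*)+f(t_n^*,u(t_n^*))\in Z$ together with the explicit form of $\bar u_{n+1}$; and in the paper (\ref{cspch}) is applied to $A_{h,0}^{-1}f_u(t_n,P_hu(t_n))(\bar K_{n,h,i}-P_h\bar K_{n,i})$ after showing $A_{h,0}^{-1}(\bar K_{n,h,i}-P_h\bar K_{n,i})=O(k\eta_h)$, rather than to a rerun of the $\sum_i b_ic_i=1/2$ cancellation at the discrete level, since that cancellation is already inherited from Theorem \ref{teorsalocal2} through the $P_h\rho_{n+1}$ term.
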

\begin{proof}
For the first part of the theorem, as
\begin{eqnarray}
\rho_{n+1,h}&=&\bar{U}_{n+1,h}-P_h u(t_{n+1})=(\bar{U}_{n+1,h}-P_h \bar{u}_{n+1})+P_h (\bar{u}_{n+1}-u(t_{n+1}))\nonumber \\
&=&(\bar{U}_{n+1,h}-P_h \bar{u}_{n+1})+P_h \rho_{n+1}, \label{decompl}
\end{eqnarray}
because of Theorem \ref{teorsalocal2}, it suffices to prove that $\bar{U}_{n+1,h}-P_h \bar{u}_{n+1}=O(k \varepsilon_h)$. Considering the definition of $\bar{u}_{n+1}$ (\ref{ub}),
\begin{eqnarray}
\bar{U}_{n+1,h}-P_h \bar{u}_{n+1}=\bar{\tilde{V}}_{n,h}(k)-P_h \bar{\tilde{v}}_n(k)+k \sum_{i=1}^s b_i [\bar{\tilde{W}}_{n,i,h}-P_h \bar{\tilde{w}}_{n,i}((1-c_i) k)],
\label{difUb}
\end{eqnarray}
where $\bar{\tilde{v}}_n$ and $\bar{\tilde{w}}_{n,i}$ satisfy (\ref{vwunt}) with $u_n$ substituted by $u(t_n)$ and $K_{n,j}$ substituted by $\bar{K}_{n,j}$.
Then,
\begin{eqnarray}
\dot{\bar{\tilde{V}}}_{n,h}(s)-P_h \dot{\bar{\tilde{v}}}_n(s)&=&A_{h,0} \bar{\tilde{V}}_{n,h}(s)-A_{h,0} R_h \bar{\tilde{v}}_n(s) \nonumber \\
&=& A_{h,0}(\bar{\tilde{V}}_{n,h}(s)-P_h \bar{\tilde{v}}_n(s))+ A_{h,0}(P_h-R_h)\bar{\tilde{v}}_n(s), \nonumber \\
\bar{\tilde{V}}_{n,h}(0)-P_h \bar{\tilde{v}}_n(0)&=&0. \nonumber
\end{eqnarray}
Because of (\ref{lemaartlin}), (H2a) and the hypotheses of regularity (\ref{reg2}), $\bar{\tilde{v}}_n(s)\in Z$, which implies, using (\ref{consistency}) and the variation-of-constants formula, that
\begin{eqnarray}
\bar{\tilde{V}}_{n,h}(k)-P_h \bar{\tilde{v}}_n(k)=\int_0^k e^{(k-s)A_{h,0}} A_{h,0}(P_h-R_h)\bar{\tilde{v}}_n(s)=O(k \varepsilon_h).
\label{pvV}
\end{eqnarray}
In a similar way, using also (\ref{fandph}),
\begin{eqnarray}
\dot{\bar{\tilde{W}}}_{n,h,i}(s)-P_h \dot{\bar{\tilde{w}}}_{n,i}(s)&=&A_{h,0} \bar{\tilde{W}}_{n,i,h}(s)-A_{h,0} R_h \bar{\tilde{w}}_{n,i}(s) \nonumber \\
&=& A_{h,0}(\bar{\tilde{W}}_{n,h,i}(s)-P_h \bar{\tilde{w}}_{n,i}(s))+ A_{h,0}(P_h-R_h)\bar{\tilde{w}}_{n,i}(s), \nonumber \\
\bar{\tilde{W}}_{n,h,i}(0)-P_h \bar{\tilde{w}}_{n,i}(0)&=&f(t_n+c_i k, \bar{K}_{n,h,i})-f(t_n+c_i k, P_h \bar{K}_{n,i}), \nonumber
\end{eqnarray}
where $\bar{\tilde{w}}_{n,i}(s)$ again belongs to $Z$ because of the definition of $\bar{K}_{n,i}$ (\ref{ub}), (\ref{lemaartlin}), (H2a) and the conditions of regularity (\ref{reg2}). Proceeding as before,
\begin{eqnarray}
\lefteqn{\bar{\tilde{W}}_{n,h,i}((1-c_i)k)-P_h \bar{\tilde{w}}_{n,i}((1-c_i)k)} \nonumber \\
&=&e^{ (1-c_i) k A_{h,0}}[f(t_n+c_i k, \bar{K}_{n,h,i})-f(t_n+c_i k, P_h \bar{K}_{n,i})]+O(k \varepsilon_h),
\label{Pw-W}
\end{eqnarray}
where
\begin{eqnarray}
\bar{K}_{n,h,i}-P_h \bar{K}_{n,i}=\bar{V}_{n,h}(c_i k)-P_h \bar{v}_n(c_i k)+k \sum_{j=1}^{i-1} a_{ij}[\bar{W}_{n,h,j}((c_i-c_j)k)-P_h \bar{w}_{n,j}((c_i-c_j)k)].
\label{Kb-PK}
\end{eqnarray}
Now, with similar arguments as those for deducing (\ref{pvV}) and (\ref{Pw-W}),
\begin{eqnarray}
\bar{V}_{n,h}(c_i k)-P_h \bar{v}_n (c_i k)
&=&O(k \varepsilon_h)
\nonumber \\
\bar{W}_{n,h,j}((c_i-c_j)k)-P_h \bar{w}_{n,j}((c_i-c_j)k)&=& e^{(c_i-c_j)k A_{h,0}}[f(t_n+c_j k, \bar{K}_{n,h,j})-f(t_n+c_j k, P_h \bar{K}_{n,j})].
\nonumber
\end{eqnarray}
Therefore, writing this in (\ref{Kb-PK}), it is inductively proved that
\begin{eqnarray}
\bar{K}_{n,h,i}-P_h \bar{K}_{n,i}=O(k \varepsilon_h),
\label{Kb-PK2}
\end{eqnarray}
with which the first result of the theorem is proved considering that in (\ref{Pw-W}) and then in (\ref{difUb}) together with (\ref{pvV}).

As for the second part of the theorem, notice that from (\ref{decompl}),
\begin{eqnarray}
A_{h,0}^{-1} \rho_{n,h}=A_{h,0}^{-1}(\bar{U}_{n+1,h}-P_h \bar{u}_{n+1})+A_{h,0}^{-1} P_h \rho_{n+1}.
\label{Airho}
\end{eqnarray}
For the first term in (\ref{Airho}), we then notice that
\begin{eqnarray}
A_{h,0}^{-1}(\bar{\tilde{V}}_{n,h}(k)-P_h \bar{\tilde{v}}_n(k))=O(k \eta_h),
\label{AinvV}
\end{eqnarray}
considering (\ref{pvV}) multiplied by $A_{h,0}^{-1}$ and the second formula of (\ref{consistency}). On the other hand, considering (\ref{Pw-W}),
\begin{eqnarray}
\lefteqn{A_{h,0}^{-1}[\bar{\tilde{W}}_{n,h,i}((1-c_i)k)-P_h \bar{\tilde{w}}_{n,i}((1-c_i)k)]} \nonumber \\
&=&A_{h,0}^{-1} e^{(1-c_i)k A_{h,0}}[f(t_n+c_i k, \bar{K}_{n,h,i})-f(t_n+c_i k, P_h \bar{K}_{n,i})]+O(k \eta_h),
\label{AinvW}
\end{eqnarray}
but
\begin{eqnarray}
\lefteqn{A_{h,0}^{-1} e^{(1-c_i)k A_{h,0}}[f(t_n+c_i k, \bar{K}_{n,h,i})-f(t_n+c_i k, P_h \bar{K}_{n,i})]} \nonumber \\
&=&e^{(1-c_i)k A_{h,0}} A_{h,0}^{-1} f_u(t_n,P_h u(t_n))(\bar{K}_{n,h,i}-P_h \bar{K}_{n,i})+O(k^2 \varepsilon_h), \nonumber
\end{eqnarray}
taking (\ref{acotet}) and (\ref{Kb-PK2}) into account. Moreover, by similar arguments as above,
\begin{eqnarray}
A_{h,0}^{-1} (\bar{K}_{n,h,i}-P_h \bar{K}_{n,i})=O(k \eta_h). \nonumber
\end{eqnarray}
Therefore, using (\ref{cspch}),
\begin{eqnarray}
A_{h,0}^{-1} e^{(1-c_i)k A_{h,0}}[f(t_n+c_i k, \bar{K}_{n,h,i})-f(t_n+c_i k, P_h \bar{K}_{n,i})]=O(k \eta_h+k^2 \varepsilon_h), \nonumber
\end{eqnarray}
and so, inserting this in (\ref{AinvW}) and using also (\ref{AinvV}),
\begin{eqnarray}
A_{h,0}^{-1}[\bar{U}_{n+1,h}-P_h \bar{u}_{n+1}]=O(k \eta_h+k^2 \varepsilon_h).
\label{AinvU}
\end{eqnarray}
For the second term in (\ref{Airho},  we notice that, as $\partial A_0^{-1} \rho_{n+1}=0$, applying (\ref{rh}), it follows that $A_{h,0} R_h (A_0^{-1} \rho_{n+1})=P_h \rho_{n+1}$, from what
\begin{eqnarray}
A_{h,0}^{-1} P_h \rho_{n+1}=R_h  (A_0^{-1} \rho_{n+1})=P_h (A_0^{-1} \rho_{n+1})+(R_h -P_h) (A_0^{-1} \rho_{n+1}).
\label{Ah0iP}
\end{eqnarray}
Then, $P_h A_0^{-1} \rho_{n+1}$ here is $O(k^3)$ because of the second part of Theorem \ref{teorsalocal2}. Secondly, $(R_h-P_h)A_0^{-1} \rho_{n+1}$ is $O(k \eta_h)$ because $A_0^{-1} \rho_{n+1}$ belongs to $Z$ due to (\ref{bu2}), (\ref{Kb}), the conditions (\ref{reg2}) and (H2a). Moreover,  $\|A_0^{-1} \rho_{n+1}\|_Z=O(k)$ because $u(t_{n+1})=u(t_n)+k \dot{u}(t_n^*)$ for $t_n^* \in [t_n,t_{n+1}]$, with $\dot{u}(t_n^*)=A u(t_n^*)+f(t_n^*, u(t_n^*))\in Z$.
Therefore, $A_{h,0}^{-1} P_h \rho_{n+1}=O(k^3+k \eta_h)$ and the result follows considering also (\ref{AinvU}) in (\ref{Airho}).
\end{proof}

\subsubsection{Global error of the full discretization}

From the first part of Theorem \ref{teorsalocalfd2}, again the classical argument would lead to global error $e_{n,h}=U_{n,h}-P_h u(t_n)=O(k+\varepsilon_h)$. However, for parabolic problems, for which (\ref{spp}) is expected to hold, using the second part of the same theorem, a summation-by-parts argument leads to second order in time, as the following theorem states. The theorem is valid for both Dirichlet and Neumann/Robin boundary conditions, in spite of the fact that,  for the latter, $\partial f(t,u(t))$ must be approximated through the numerical solution itself, as explained in Remark \ref{rembc}.
\begin{theorem}
Under hypotheses of Theorem \ref{teorsalocalfd2}, but assuming also (\ref{spp}) and that
\begin{eqnarray}
&&\hspace{-0.5cm}f\in C^3([0,t]\times X, X), \,\, A f(t,u(t))\in C^1([0,T],X),  \,\,  u\in C^1([0,T], D(A^2))\cap C^4([0,T],X), \nonumber \\
&&\hspace{-0.5cm} A^l \dot{u}(t)\in Z, \, l=0,1,2, \quad \frac{d}{dt} A^l f(t,u(t))\in Z, \, l=0,1, \nonumber
\end{eqnarray}
it follows that $e_{n,h}=U_{n,h}-P_h u(t_n)=O(k^2+k \varepsilon_h+\eta_h)$.
\label{teorsaglobfd2}
\end{theorem}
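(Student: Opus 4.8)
\medskip
\noindent\textbf{Plan of proof.}
The argument is the discrete ``smoothing plus summation by parts'' one used for Strang splitting in \cite{acrnl}, the new ingredients being the two local error estimates of Theorem~\ref{teorsalocalfd2}. Write one step of the scheme (\ref{fd2}) as $U_{n+1,h}=\Psi_{n,h}(U_{n,h})$ and subtract the step $\bar U_{n+1,h}=\Psi_{n,h}(P_hu(t_n))$ frozen at the exact solution. Since the stages $K_{n,h,i}$ depend on $U_{n,h}$ only through the $e^{c_ikA_{h,0}}$ term and through $f$, solving the lower-triangular stage system and using (H1a), (H3) and the stage bounds (\ref{acotet}), (\ref{Kb-PK2}) yields
$$U_{n+1,h}-\bar U_{n+1,h}=\big(e^{kA_{h,0}}+k\widetilde D_{n,h}\big)e_{n,h},\qquad \|\widetilde D_{n,h}\|_h\le C,$$
uniformly in $n$ and $h$, with $\widetilde D_{n,h}$ built from averaged Jacobians of $f$ (bounded as long as the numerical solution stays in the fixed neighbourhood of the exact one where (H3) holds). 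The boundary terms of (\ref{fd2}) are fixed and, as seen when deriving (\ref{fd2}), each is of the form ``bounded operator $\circ\,A_{h,0}^{-1}C_h$'' with $A_{h,0}^{-1}C_h$ bounded by (H2c); for Neumann/Robin data, replacing $\partial f(t_n,u(t_n))$ by a quantity computed from $U_{n,h}$ (Remark~\ref{rembc}) enters those terms with a factor $k$ and Lipschitz dependence on $U_{n,h}$, hence is absorbed into $\widetilde D_{n,h}$. Iterating, $e_{n,h}=\sum_{j=1}^n\Phi_{n,j}\rho_{j,h}$, where $\Phi_{n,j}$ is the product of $e^{kA_{h,0}}+k\widetilde D_{l,h}$ for $l=j,\dots,n-1$, and $\|\Phi_{n,j}\|_h\le e^{CT}$ by a discrete Gronwall lemma.

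Expanding by discrete Duhamel, $\Phi_{n,j}=e^{(n-j)kA_{h,0}}+\sum_{l=j}^{n-1}e^{(n-1-l)kA_{h,0}}k\widetilde D_{l,h}\Phi_{l,j}$, substituting and interchanging the order of summation gives
$$e_{n,h}=M_n+\sum_{l=1}^{n-1}e^{(n-1-l)kA_{h,0}}k\widetilde D_{l,h}\,e_{l,h},\qquad M_n:=\sum_{j=1}^n e^{(n-j)kA_{h,0}}\rho_{j,h},$$
so $\|e_{n,h}\|\le\|M_n\|+Ck\sum_{l=1}^{n-1}\|e_{l,h}\|$ and, by discrete Gronwall, it suffices to bound the purely linear object $M_n$. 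Here I use the second estimate of Theorem~\ref{teorsalocalfd2}: with $\sigma_{j,h}:=A_{h,0}^{-1}\rho_{j,h}=O(k^3+k\eta_h+k^2\varepsilon_h)$ one has $M_n=\sum_{j=1}^n e^{(n-j)kA_{h,0}}A_{h,0}\sigma_{j,h}$. The $j=n$ term is $\rho_{n,h}=O(k^2+k\varepsilon_h)$; for the rest, writing $kA_{h,0}e^{rkA_{h,0}}=S_r-S_{r-1}$ with $S_r=kA_{h,0}\sum_{l=1}^r e^{lkA_{h,0}}$ and applying Abel summation,
$$\sum_{j=1}^{n-1}e^{(n-j)kA_{h,0}}A_{h,0}\sigma_{j,h}=\frac1k\Big(S_{n-1}\sigma_{1,h}+\sum_{r=1}^{n-2}S_r\big(\sigma_{n-r,h}-\sigma_{n-r-1,h}\big)\Big).$$
By (\ref{spp}), $\|S_r\|_h\le C$ for all relevant $r$, so the boundary term is $O(\tfrac1k(k^3+k\eta_h+k^2\varepsilon_h))=O(k^2+\eta_h+k\varepsilon_h)$, and the sum (with $\sim 1/k$ summands) gives the same bound provided each increment $\sigma_{j+1,h}-\sigma_{j,h}$ is one power of $k$ smaller than $\sigma_{j,h}$, i.e.\ $O(k^4+k^2\eta_h+k^3\varepsilon_h)$. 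Granting this, $\|M_n\|=O(k^2+k\varepsilon_h+\eta_h)$ and hence $e_{n,h}=U_{n,h}-P_hu(t_n)=O(k^2+k\varepsilon_h+\eta_h)$.

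The main obstacle is the bound on the increments $\sigma_{j+1,h}-\sigma_{j,h}$, and this is exactly where the strengthened regularity of the statement enters. One must revisit the proof of Theorem~\ref{teorsalocalfd2} --- that is, the expansions (\ref{forloc}) and (\ref{Airho})--(\ref{Ah0iP}) together with the time semidiscretization error of Theorem~\ref{teorsalocal2} --- tracking the dependence of each leading term on $t_n$: the $k^3$, $k\eta_h$ and $k^2\varepsilon_h$ contributions to $\sigma_{n,h}$ are those powers of $k$ times expressions assembled from $u(t_n),\dot u(t_n),\ddot u(t_n),A^lu(t_n),f(t_n,u(t_n)),Af(t_n,u(t_n))$ composed with bounded operators and, in the $\varepsilon_h$/$\eta_h$ parts, with $P_h-R_h$. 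The hypotheses $f\in C^3$, $u\in C^1([0,T],D(A^2))\cap C^4([0,T],X)$, $Af(t,u(t))\in C^1([0,T],X)$ and $A^l\dot u(t),\ \frac{d}{dt}A^lf(t,u(t))\in Z$ are precisely what makes every such expression once more differentiable in $t_n$ with a derivative bounded uniformly in $h$ (the $Z$-membership being needed so that (\ref{consistency}) still applies to the differences), whence $\sigma_{j+1,h}-\sigma_{j,h}=O(k)\cdot O(k^3+k\eta_h+k^2\varepsilon_h)$ as claimed. The only remaining point is the standard bootstrap: $\|\widetilde D_{l,h}\|_h\le C$ and (\ref{cspch}) hold while the numerical solution stays in a fixed neighbourhood of the exact one, which is guaranteed a posteriori by the very bound $O(k^2+k\varepsilon_h+\eta_h)$ for $k$ and $h$ small enough, closing the argument.
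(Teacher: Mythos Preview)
Your proposal is correct and follows essentially the same route as the paper: you obtain the stability recursion $e_{n+1,h}=e^{kA_{h,0}}e_{n,h}+O(ke_{n,h})+\rho_{n+1,h}$ (handling the Dirichlet and Robin/Neumann boundary terms separately, exactly as the paper does), and then apply the summation-by-parts/Abel-summation argument together with the second estimate of Theorem~\ref{teorsalocalfd2} and a discrete Gronwall lemma. The paper itself is terse at this point, deferring the details to the analogous Strang-splitting argument in~\cite{acrnl}; what you have written is precisely a fleshed-out version of that argument, including the key observation that the extra regularity hypotheses are needed to control the increments $\sigma_{j+1,h}-\sigma_{j,h}$.
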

\begin{proof}
Firstly notice that
\begin{eqnarray}
e_{n+1,h}=[U_{n+1,h}-\bar{U}_{n+1,h}]+[\bar{U}_{n+1,h}-P_h u(t_{n+1})]=[U_{n+1,h}-\bar{U}_{n+1,h}]+\rho_{n+1,h}.
\label{decompg}
\end{eqnarray}
Then, using (\ref{fd2}), when considering Dirichlet boundary conditions, in which case $\partial Au(t_n)$ and $\partial f(t_n,u(t_n))$ are calculated exactly in terms of data, as $\bar{U}_{n+1,h}$ is the same as $U_{n+1,h}$ but starting from $P_h u(t_n)$ instead of $U_{n,h}$,
\begin{eqnarray}
\lefteqn{U_{n+1,h}-\bar{U}_{n+1,h}=e^{k A_{h,0}}[U_{n,h}-P_h u(t_n)]}
\nonumber \\
&&+k \sum_{i=1}^s b_i e^{(1-c_i)k A_{h,0}}[f(t_n+c_i k, K_{n,h,i})-f(t_n+c_i k, \bar{K}_{n,h,i})], \label{UUb}
\end{eqnarray}
where, recursively, for $i=1,\dots,s$,
\begin{eqnarray}
\lefteqn{K_{n+1,h,i}-\bar{K}_{n+1,h,i}=e^{c_i k A_{h,0}}[U_{n,h}-P_h u(t_n)]}\nonumber \\
&&+k \sum_{j=1}^{i-1} a_{ij}e^{(c_i-c_j)k A_{h,0}}[f(t_n+c_j k, K_{n,h,j})-f(t_n+c_j k, \bar{K}_{n,h,j})]. \label{KKb}
\end{eqnarray}
In such a way, it is inductively proved that $K_{n+1,h,i}-\bar{K}_{n+1,h,i}=O(e_{n,h})$ and finally, using (H3), (\ref{UUb}) and (\ref{decompg}),
\begin{eqnarray}
e_{n+1,h}=e^{k A_{h,0}} e_{n,h}+O(k e_{n,h})+\rho_{n+1,h},
\label{recurg1}
\end{eqnarray}
from what the result follows from Theorem \ref{teorsalocalfd2} by a summation-by-parts argument and a discrete Gronwall lemma in the same way than the proof of Theorem 22 in \cite{acrnl} for Strang method.

On the other hand, when considering Robin/Neumann boundary conditions, as, according to Remark \ref{rembc},  $\partial f(t_n,u(t_n))$  is just calculated approximately with an error which is $O(e_{n,h})$,
 using (\ref{fd2}) again,
\begin{eqnarray}
\lefteqn{U_{n+1,h}-\bar{U}_{n+1,h}=e^{k A_{h,0}} e_{n,h} + k^2 \varphi_2( k A_{h,0})C_h O(e_{n,h})}\nonumber \\
&&+k \sum_{i=1}^{s} b_i \bigg[e^{(1-c_i)k A_{h,0}}[f(t_n+c_i k, K_{n,h,i})-f(t_n+c_i k, \bar{K}_{n,h,i})]
\nonumber \\
&& \hspace{2cm}+ (1-c_i)k \varphi_1((1-c_i)k A_{h,0}) C_h O(e_{n,h})\bigg], \nonumber
\end{eqnarray}
where $K_{n+1,h,i}-\bar{K}_{n+1,h,i}$ is the same as in (\ref{KKb}) because $\partial u(t_n)$ is given exactly in terms of data with this type of boundary conditions. Then, using (\ref{recurf}) and (H3),
\begin{eqnarray}
\lefteqn{\hspace{-1cm}U_{n+1,h}-\bar{U}_{n+1,h}=e^{ k A_{h,0}} e_{n,h} + k [\varphi_1( k A_{h,0})-I] A_{h,0}^{-1} C_h O(e_{n,h})}\nonumber \\
&&+k \sum_{i=1}^{s} b_i \bigg[e^{(1-c_i)k A_{h,0}}O(e_{n,h})
+ [e^{(1-c_i)k A_{h,0}}-I]A_{h,0}^{-1} C_h O(e_{n,h}) \bigg]. \nonumber
\end{eqnarray}
Using now (H2c), it follows that $U_{n+1,h}-\bar{U}_{n+1,h}=e^{ k A_{h,0}} e_{n,h}+O(k e_{n,h})$, from what (\ref{recurg1}) applies again and the result follows in the same way as above.
\end{proof}

\subsection{Searching for local order 3}
For the stages, we again consider (\ref{etapast}), but where now $v_n$ and $w_{n.,j}$ are those in (\ref{vwunt}) instead of those in (\ref{vwetapast}). On the other hand, $u_{n+1}$ is calculated through (\ref{unt}) where now $\tilde{v}_n$, $\tilde{w}_{n,j} (j=1,\dots,s)$ satisfy
\begin{eqnarray}
&&\left\{ \begin{array}{rcl} \dot{\tilde{v}}_n(s)&=& A \tilde{v}_n(s), \\ \tilde{v}_n(0)&=&u_n, \\ \partial \tilde{v}_n(s)&=&\partial [u(t_n)+s A u(t_n)+\frac{s^2}{2} A^2 u(t_n)], \end{array}\right.
 \nonumber \\
 &&\left\{ \begin{array}{rcl} \dot{\tilde{w}}_{n,j}(s)&=& A \tilde{w}_{n,j}(s), \\ \tilde{w}_{n,j}(0)&=&f(t_n+c_jk,K_{n,j}), \\ \partial \tilde{w}_{n,j}(s)&=&\partial [f(t_n+c_j k,u(t_n)+c_j k \dot{u}(t_n))+s A f(t_n,u(t_n))]. \end{array}\right. \label{timelocal3}
\end{eqnarray}
After discretizing in space and using the variation-of-constants formula as in Subsection 4.1, the full discretized numerical solution after one step is given by
\begin{eqnarray}
K_{n,h,i}&=&e^{c_i k A_{h,0}}U_{n,h}+c_i k \varphi_1(c_i k A_{h,0}) C_h \partial u(t_n)+c_i^2 k^2 \varphi_2(c_i k A_{h,0}) C_h \partial A u(t_n) \nonumber \\
&&+
k\sum_{j=1}^{i-1} a_{ij}\bigg[ e^{(c_i-c_j) k A_{h,0}}f(t_n+c_j k, K_{n,h,j}) \nonumber \\
&&\hspace{2cm}+(c_i-c_j) k \varphi_1((c_i-c_j)k A_{h,0})C_h \partial f(t_n,u(t_n))\bigg], \quad i=1,\dots,s,\nonumber \\
U_{n+1,h}&=&e^{k A_{h,0}}U_{n,h}+\sum_{l=1}^3 k^l \varphi_l(k A_{h,0}) C_h \partial A^{l-1} u(t_n) \nonumber \\
&&+k\sum_{i=1}^s b_i  \bigg[e^{(1-c_i)k A_{h,0}}f(t_n+c_i k, K_{n,h,i}) \nonumber \\
&& \hspace{2cm} +(1-c_i)k \varphi_1((1-c_i) kA_{h,0}) C_h \partial f(t_n+c_i k,u(t_n)+c_i k \dot{u}(t_n)) \nonumber \\
&& \hspace{2cm} +(1-c_i)^2 k^2 \varphi_2((1-c_i) kA_{h,0}) C_h \partial A f(t_n,u(t_n))
\bigg].\label{fd3}
\end{eqnarray}

\begin{remark}
Notice that, apart from the terms on the boundary which were already necessary to achieve local order $2$ and which can be calculated according to Remark \ref{rembc}, now we also need $\partial A^2 u(t_n)$, $\partial f(t_n+c_i k,u(t_n)+c_i k \dot{u}(t_n))$ and $\partial A f(t_n,u(t_n))$.

With Dirichlet boundary conditions and $f$ like in (\ref{nonlinearterm}), it follows that
\begin{eqnarray}
\partial f(t_n+c_i k,u(t_n)+c_i k \dot{u}(t_n))&=&\phi (g(t_n)+c_i k \dot{g}(t_n))+ \partial h(t_n+c_i k),
\nonumber \\
\partial A^2 u(t_n)&=& \ddot{g}(t_n)-\partial \dot{h}(t_n)-\phi'(g(t_n))\dot{g}(t_n)-\partial A f(t_n,u(t_n)), \nonumber
\end{eqnarray}
and the only term which cannot be calculated exactly in terms of data is $\partial A f(t_n,u(t_n))$. However, that can be approximated recurring to numerical differentiation. For example, in one dimension and assuming that $A$ is the second spatial derivative,
$$A f(t_n,u(t_n))=\phi''(u(t_n)) u_x(t_n)^2+\phi'(u(t_n))u_{xx}(t_n)+h_{xx}(t_n),$$
 from what
\begin{eqnarray}
\partial Af(t_n,u(t_n))\approx \phi''(g(t_n)) \hat{u}_x(t_n)^2|_{\partial \Omega}+\phi'(g(t_n))(\dot{g}(t_n)-\phi(g(t_n))-\partial h(t_n))+\partial h_{xx}(t_n), \nonumber
\end{eqnarray}
where $\hat{u}_x(t_n)|_{\partial \Omega}$ is the result of applying numerical differentiation to approximate $u_x(t_n)$ on the boundary. For that, both the exact values at the boundary and the approximated values at the interior of the domain given by the numerical approximation must be used. As a result,
$
\hat{u}_x(t_n)-u_x(t_n)=O(\nu_h+\frac{e_{n,h}}{h}),
$
where $\nu_h$ decreases with $h$ and comes from the error of the numerical differentiation if the exact values of the solutions were used. The second term $\frac{e_{n,h}}{h}$ comes from the fact that the values at the interior are just the approximations which are given by the numerical solution and to the necessity of dividing by $h$ when approximating a first derivative in space. For a general operator $A$, we will assume that the error when approximating both $\partial A^2 u(t_n)$ and $\partial A f(t_n,u(t_n))$ is
as specified in Table \ref{tabla}
for some real value $\gamma$, where $\nu_h$ comes from the numerical approximation of the corresponding derivatives in space if the exact values had been taken.

As for Robin/Neumann boundary conditions (\ref{neurob}), we notice that
\begin{eqnarray}
\lefteqn{\partial f(t_n+c_i k,u(t_n)+c_i k \dot{u}(t_n))=\alpha [\phi (u(t_n)|_{\partial \Omega}+c_i k \dot{u}(t_n)|_{\partial \Omega})+  h(t_n+c_i k)|_{\partial \Omega}]} \nonumber \\
&&+\beta[\phi' (u(t_n)|_{\partial \Omega}+c_i k \dot{u}(t_n)|_{\partial \Omega})(\partial_n u(t_n)|_{\partial \Omega}+c_i k \partial_n \dot{u}(t_n)|_{\partial \Omega})+ \partial_n h(t_n+c_i k)|_{\partial \Omega}].
 \nonumber
 \end{eqnarray}
Here $u(t_n)|_{\partial \Omega}$ and $\partial_n u(t_n)|_{\partial \Omega}$ are approximated through the numerical solution, as in Remark \ref{rembc},  $\dot{u}(t_n)|_{\partial \Omega}$ is then approximated through numerical differentiation in time from the approximated values at the boundary and $\partial_n \dot{u}(t_n)|_{\partial \Omega}$ is then solved from the differentiation in time of (\ref{neurob}). In such a way, if the error coming from the numerical differentiation in time from the exact values is $O(\mu_{k,1})$, with $\mu_{k,1}$ decreasing when $k$ decreases, it happens that the error when approximating $\partial f(t_n+c_i k,u(t_n)+c_i k \dot{u}(t_n))$ is $O(k \mu_{k,1}+e_{n,h})$, with the same argument as before for space numerical differentiation, and taking now into account the factor $k$ which is multiplying the corresponding derivative. As for $\partial A^2 u(t_n)$, using (\ref{laibvp}), $ \partial A^2 u=\ddot{g}-\partial[\dot{h}+\phi'(u)\dot{u}]-\partial (Af)$. Moreover,
\begin{eqnarray}
\partial[\dot{h}+\phi'(u)\dot{u}]=\alpha[\dot{h}|_{\partial \Omega}+\phi'(u|_{\partial \Omega})\dot{u}|_{\partial \Omega}]+\beta [\partial_n\dot{h}|_{\partial \Omega}+\phi''(u|_{\partial \Omega})\partial_n u|_{\partial \Omega}\dot{u}|_{\partial \Omega}+\phi'(u|_{\partial \Omega})\partial_n \dot{u}|_{\partial \Omega}],
\nonumber
\end{eqnarray}
and it is then necessary to approximate $u|_{\partial \Omega}$, $\partial_n u|_{\partial \Omega}$, $\dot{u}|_{\partial \Omega}$ and $\partial_n \dot{u}|_{\partial \Omega}$, as before.
On the other hand, when $A$ is the second derivative in one dimension,
\begin{eqnarray}
\partial (Af)&=&\alpha\bigg[\phi''(u|_{\partial \Omega}) u_x^2|_{\partial \Omega}+\phi'(u|_{\partial \Omega})[\dot{u}|_{\partial \Omega}-\phi(u|_{\partial \Omega})-h|_{\partial \Omega}]+h_{xx}|_{\partial \Omega}\bigg] \nonumber \\
&&+\beta \bigg[\phi'''(u|_{\partial \Omega}) u_x^3|_{\partial \Omega}+3 \phi''(u|_{\partial \Omega}) u_x|_{\partial \Omega}[\dot{u}|_{\partial \Omega}-\phi(u|_{\partial \Omega})-h|_{\partial \Omega}]\nonumber \\
&&\hspace{1cm}+\phi'(u|_{\partial \Omega})[\dot{u}_x|_{\partial \Omega}-\phi'(u|_{\partial \Omega})u_x|_{\partial \Omega}-h_x|_{\partial \Omega}]+h_{xxx}|_{\partial \Omega}\bigg]. \nonumber
\end{eqnarray}
Therefore, in this particular case, approximating  $u|_{\partial \Omega}$, $\dot{u}|_{\partial \Omega}$, $u_x|_{\partial \Omega}$ and $\dot{u}_x|_{\partial \Omega}$ as above, the error which comes from calculating $\partial A^2 u(t_n)$ and $\partial A f(t_n,u(t_n))$ is $O(\mu_{k,1}+\frac{e_{n,h}}{k})$. However, for more general operators, space numerical differentiation may be also needed, and therefore we will assume that, in general, the error coming from the calculation of those boundaries is as specified in Table \ref{tabla}.
\label{rembc3}
\end{remark}

\begin{remark}
We notice that, if $\partial u(t)=\partial A u(t)= \partial A^2 u(t)=0$, from (\ref{laibvp}) it follows that $\partial f(t,u(t))=\partial A f(t,u(t))=0$. In particular, this implies that $\partial f(t_n+c_i k, u(t_n+c_i k))=0$, which differs from $\partial f(t_n+c_i k, u(t_n)+c_i k \dot{u}(t_n))$ in $O(k^2)$. Then, in this case, each step in (\ref{fd3}) differs from the classical approach (\ref{lawsonca}) in $O(k^3)$ since the difference is
\begin{eqnarray}
k\sum_{i=1}^s b_i (1-c_i)k \varphi_1((1-c_i)k A_{h,0})C_h O(k^2)=k \sum_{i=1}^s b_i [e^{(1-c_i)k A_{h,0}}-I]A_{h,0}^{-1}C_h O(k^2), \nonumber
\end{eqnarray}
and, according to (H2c), $A_{h,0}^{-1}C_h$ is uniformly bounded. This justifies, through Theorems \ref{teorsalocal3} and \ref{teorsalocalfd3}, that the local error with the classical approach, under these particular boundary conditions, behaves with order $3$ under the assumptions of those theorems.
\label{remvan3}
\end{remark}

\subsubsection{Local error of the time semidiscretization}

In a similar way to Theorem \ref{teorsalocal2}, we have the following result:
\begin{theorem}
\label{teorsalocal3}
Under hypotheses (A1)-(A5) and (H1)-(H3), assuming also that, for every $t\in [0,T]$, $f(t,u(t))\in D(A^2)$, that, for small enough $\tau$ ($\tau\le \tau_0$), $f(t+\tau,u(t)+\tau \dot{u}(t)),f_t(t+\tau,u(t)+\tau \dot{u}(t)), f_u(t+\tau,u(t)+\tau \dot{u}(t))\dot{u}(t)\in D(A)$ and
\begin{eqnarray}
&& \hspace{-0.5cm} u\in C([0,T], D(A^3))\cap C^3([0,T],X), \nonumber \\
&&\hspace{-0.5cm} f\in C^2([0,T]\times X,X), \nonumber \\
&&\hspace{-0.5cm} A^l f(\cdot,u(\cdot))\in C([0,T],X), \quad l=1,2, \nonumber \\
&& \hspace{-0.5cm} Af_t(t+\tau,u(t)+\tau \dot{u}(t)), A[f_u(t+\tau,u(t)+\tau \dot{u}(t))\dot{u}(t)] \in C([0,T]\times [0,\tau_0],X),
\label{regloc3}
\end{eqnarray}
and that the Runge-Kutta tableau corresponds to a method of classical order $\ge 2$,
it follows that $\rho_n=O(k^3)$. Moreover, if $f\in C^3([0,T]\times X,X), u\in C^4([0,T],X)$, (\ref{cspc}) holds
and the Runge-Kutta tableau corresponds to a method of classical order $\ge 3$, it follows that $A_{0}^{-1} \rho_n=O(k^4)$.
\end{theorem}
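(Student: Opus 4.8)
The plan is to mimic, one order higher, the two-step argument used in the proof of Theorem~\ref{teorsalocal2}. I would first apply Lemma~3.1 of \cite{acr2} to the linear homogeneous IBVPs with polynomial-in-$s$ boundary data appearing in (\ref{etapast})--(\ref{unt}) (the stages carrying the ``order-$2$ level'' data of (\ref{vwunt}), the update step the data of (\ref{timelocal3})), so as to obtain closed forms such as
$$\bar{\tilde v}_n(s)=u(t_n)+sAu(t_n)+\tfrac{s^2}{2}A^2u(t_n)+s^3\varphi_3(sA_0)A^3u(t_n),$$
while $\bar{\tilde w}_{n,i}(s)$ equals $e^{sA_0}$ applied to the difference $f(t_n+c_ik,\bar K_{n,i})-f(t_n+c_ik,u(t_n)+c_ik\dot u(t_n))$ plus explicit $\varphi_l(sA_0)$-terms acting on $f(t_n,u(t_n))$ and $Af(t_n,u(t_n))$, and analogously for $\bar v_n$, $\bar w_{n,j}$. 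The domain and continuity hypotheses collected in (\ref{regloc3})---$f(t,u(t))\in D(A^2)$, $u\in C([0,T],D(A^3))$, and the conditions on $Af_t$ and $A[f_u\dot u]$---are exactly what makes these representations, and the $\varphi_l$-expansions below, legitimate.

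For the first assertion $\rho_n=O(k^3)$, substituting into (\ref{etapast}), and using $\sum_{j<i}a_{ij}=c_i$ from (\ref{cscond}) together with $Au+f=\dot u$, gives the stage expansion $\bar K_{n,i}=u(t_n)+c_ik\dot u(t_n)+O(k^2)$; then, Taylor-expanding the $\varphi_l(sA_0)$ in (\ref{unt}),
$$\bar u_{n+1}=u(t_n)+k\dot u(t_n)+\tfrac{k^2}{2}\bigl[A^2u(t_n)+Af(t_n,u(t_n))+f_t(t_n,u(t_n))+f_u(t_n,u(t_n))\dot u(t_n)\bigr]+O(k^3).$$
Here $\tfrac{k^2}{2}A^2u$ comes from $\bar{\tilde v}_n(k)$, $\tfrac{k^2}{2}Af$ from the $s\varphi_1(sA_0)Af(t_n,u(t_n))$ piece of $\bar{\tilde w}_{n,i}$ at $s=(1-c_i)k$, and $\tfrac{k^2}{2}(f_t+f_u\dot u)$ from the initial value $f(t_n+c_ik,\bar K_{n,i})$, all because $\sum_ib_i=1$ and $\sum_ib_ic_i=1/2$ (classical order $\ge 2$); the first-order error in the stage arguments is harmless at this order. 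Since $\ddot u=A^2u+Af+f_t+f_u\dot u$ on differentiating (\ref{laibvp}), the bracket equals $\ddot u(t_n)$ and $\bar u_{n+1}$ agrees with the Taylor expansion of $u(t_{n+1})$ up to $O(k^3)$.

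For the second assertion $A_0^{-1}\rho_n=O(k^4)$ I would, as at order $2$, multiply $\rho_{n+1}=\bar u_{n+1}-u(t_{n+1})$ by $A_0^{-1}$ and push the expansion one further power of $k$, using (\ref{recurf}) to replace each $\varphi_l(kA_0)-\tfrac1{l!}$ by $kA_0\varphi_{l+1}(kA_0)$ and to expand $f(t_n+c_ik,\bar K_{n,i})$ to second order around $(t_n,u(t_n))$. After collecting terms, the genuinely $O(k^3)$ part of $A_0^{-1}\rho_{n+1}$ becomes $\tfrac{k^3}{6}A_0^{-1}\bigl[w-\dddot u(t_n)\bigr]+O(k^4)$, where $w$ is the expression the chain rule yields for $\dddot u$, and the bracket vanishes on differentiating (\ref{laibvp}) twice. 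This cancellation requires, besides $\sum_ib_i=1$ and $\sum_ib_ic_i=1/2$, the third-order conditions $\sum_ib_ic_i^2=1/3$ and $\sum_ib_i\sum_{j<i}a_{ij}c_j=1/6$ (hence classical order $\ge 3$), and it requires the mixed terms produced along the way---of the types $A_0^{-1}f_u(t_n,u(t_n))A_0(\cdot)$, $A_0^{-1}f_{uu}(t_n,u(t_n))[\cdot,\cdot]$ and $A_0^{-1}\bigl(\tfrac{d}{dt}f_u(t,u(t))\bigr)A_0(\cdot)$---to be uniformly bounded, which is guaranteed by (\ref{cspc}), by $f\in C^3$ (so $f_{uu}$ is bounded near the solution), and by the continuity of $Af_t$ and $A[f_u\dot u]$ in (\ref{regloc3}).

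I expect the main obstacle to be precisely this bookkeeping: keeping track of which intermediate quantities lie in $D(A)$ or $D(A^2)$ so that every block in which $A_0^{-1}$ stands outside a factor containing $A_0$ is controlled---the same difficulty that forced the introduction of (\ref{cspc}) at order $2$, now reappearing also for the time-differentiated Jacobian and the Hessian of $f$---together with organising all the $O(k^3)$ contributions (those from $\bar{\tilde v}_n(k)$, those from the $\varphi_l((1-c_i)kA_0)$-expansions inside $k\sum_ib_i\bar{\tilde w}_{n,i}$, and those from the second-order expansion of $f$ at the only first-order accurate stage values). Once those boundedness and regularity points are settled, the algebraic cancellations reduce to applying the Butcher order conditions and differentiating (\ref{laibvp}) the appropriate number of times.
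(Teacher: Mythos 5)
Your proposal is correct and follows essentially the same route as the paper: the closed-form representations from Lemma~3.1 of \cite{acr2} for the stage and update problems (with the order-$2$ data in the stages and the data of (\ref{timelocal3}) in the update), Taylor expansion of the $\varphi_l$-terms and of $f$, the order conditions $\sum_i b_i=1$, $\sum_i b_ic_i=1/2$ (and, for the second claim, the third-order conditions) to identify the $k^2$- and $k^3$-brackets with $\ddot u$ and $\dddot u$ obtained by differentiating (\ref{laibvp}), and (\ref{cspc}) plus the regularity in (\ref{regloc3}) to control the $A_0^{-1}(\cdot)A_0$-type blocks after multiplying by $A_0^{-1}$. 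This matches the paper's argument, which itself only sketches the $O(k^4)$ part in the same spirit.
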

\begin{proof} Using Lemma 3.1 in \cite{acr2} again,
\begin{eqnarray}
\bar{v}_n(s)&=&u(t_n)+s A u(t_n)+s^2 \varphi_2(s A_0) A^2 u(t_n), \nonumber \\
\bar{\tilde{v}}_n(s)&=&u(t_n)+s A u(t_n)+\frac{s^2}{2} A^2 u(t_n)+ s^3 \varphi_3(s A_0) A^3 u(t_n), \nonumber \\
\bar{w}_{n,i}(s)&=& e^{s A_0}[f(t_n+c_i k, \bar{K}_{n,i})-f(t_n,u(t_n))]+f(t_n,u(t_n))+s \varphi_1(s A_0) A f(t_n,u(t_n)), \nonumber
\end{eqnarray}
and with a similar argument to that used in that lemma,
\begin{eqnarray}
\bar{\tilde{w}}_{n,i}(s)&=& e^{s A_0}[f(t_n+c_i k, \bar{K}_{n,i})-f(t_n,u(t_n)+c_i k \dot{u}(t_n))]+f(t_n+c_i k,u(t_n)+c_i k \dot{u}(t_n)) \nonumber \\
&&+s  A f(t_n,u(t_n))+s \varphi_1(s A_0)[Af(t_n+c_i k,u(t_n)+c_i k \dot{u}(t_n))-A f(t_n,u(t_n))]\nonumber \\
&&+s^2 \varphi_2(s A_0) A^2 f(t_n,u(t_n)).  \nonumber
\end{eqnarray}
From here,
\begin{eqnarray}
\bar{K}_{n,i}&=& u(t_n)+ c_i k  A u(t_n)+c_i^2 k^2 \varphi_2(k A_0) A^2 u(t_n) \nonumber \\
&&+k\sum_{j=1}^{i-1} a_{ij} \bigg[e^{(c_i-c_j)kA_0} [f(t_n+c_j k, \bar{K}_{n,j})-f(t_n,u(t_n))]+f(t_n,u(t_n)) \nonumber\\
&&\hspace{2cm} +(c_i-c_j)k \varphi_1((c_i-c_j)k A_0) A f(t_n,u(t_n))\bigg] \nonumber \\
&=&u(t_n)+c_i k\dot{u}(t_n)+O(k^2), \qquad i=1,\dots,s, \nonumber
\\
\bar{u}_{n+1}&=&u(t_n)+k A u(t_n)+ \frac{k^2}{2} A^2 u(t_n)+ k^3 \varphi_3(k A_0) A^3 u(t_n) \nonumber \\
&&+k \sum_{i=1}^s b_i \bigg[f(t_n+c_i k, u(t_n)+c_i k \dot{u}(t_n))+(1-c_i)k A f(t_n,u(t_n)) \nonumber \\
&&\hspace{2cm}+e^{(1-c_i)k A_0}[f(t_n+c_i k,\bar{K}_{n,i})-f(t_n+c_i k,u(t_n)+c_i k \dot{u}(t_n))]\nonumber \\
&& \hspace{2cm} +(1-c_i)k \varphi_1((1-c_i)k A_0) [A f(t_n+c_i k,u(t_n)+c_i k \dot{u}(t_n))-A f(t_n,u(t_n))] \nonumber \\
&& \hspace{2cm} +(1-c_i)^2 k^2 \varphi_2((1-c_i)k A_0) A^2 f(t_n,u(t_n))
\bigg] \nonumber\\
&=& u(t_n)+k[Au(t_n)+f(t_n,u(t_n))]\nonumber \\
&&+\frac{k^2}{2} [A^2 u(t_n)+f_t(t_n,u(t_n))+f_u(t_n,u(t_n)) \dot{u}(t_n)+ A f(t_n,u(t_n)) ]+O(k^3), \nonumber \\
&=& u(t_n)+k \dot{u}(t_n)+\frac{k^2}{2}  \ddot{u} (t_n) +O(k^3) \nonumber,
\end{eqnarray}
where (\ref{regloc3}) has been used, and so the first part of the theorem follows.

For the second part of the theorem, in a similar way to the proof of Theorem \ref{teorsalocal2}, but looking now at the term in $k^3$ and using the new stronger hypotheses, it can be proved that
\begin{eqnarray}
A_0^{-1} \rho_{n+1}&=&\frac{k^3}{6}A_0^{-1}\bigg[A^3 u+ f_{tt}+2 f_{tu} \dot{u}+ f_{uu} \dot{u}^2 + f_u A^2 u + f_u[f_t+f_u \dot{u} +A f] \nonumber \\
&&\hspace{1.5cm} +A f_t+A (f_u \dot{u})+A^2 f-\stackrel{\dots}{u}\bigg]+O(k^4),
\end{eqnarray}
and the term in bracket vanishes by differentiating (\ref{laibvp}) twice.
\end{proof}

\subsubsection{Local error of the full discretization}
 Following a similar proof to that of Theorem \ref{teorsalocalfd2}, the following result turns up:
\begin{theorem} Under the same hypotheses of the first part of Theorem \ref{teorsalocal3}, and assuming also that, for $t\in [0,T]$ and $\tau\in [0,\tau_0]$,
\begin{eqnarray}
A^l u(t)\in Z, \,\, l=0,1,2,3, \quad A^l f(t,u(t))\in Z, \,\,  l=1,2, \quad A f(t+\tau, u(t)+\tau \dot{u}(t))\in Z,
\nonumber
\end{eqnarray}
it happens that $\rho_{n,h}=O(k^3+k \varepsilon_h)$ where the constant in Landau notation is independent of $k$ and $h$. Moreover, under the additional hypotheses of the second part of Theorem \ref{teorsalocal3},
together with condition (\ref{cspch}), it follows that
$A_{h,0}^{-1} \rho_{n,h}=O(k^4+k \eta_h+k^2 \epsilon_h)$.
\label{teorsalocalfd3}
\end{theorem}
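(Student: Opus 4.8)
The plan is to mimic, term by term, the proof of Theorem~\ref{teorsalocalfd2}, replacing the order-$2$ building blocks by the order-$3$ ones that underlie (\ref{fd3}). I would first write
\[
\rho_{n+1,h}=\bigl(\bar{U}_{n+1,h}-P_h\bar{u}_{n+1}\bigr)+P_h\bigl(\bar{u}_{n+1}-u(t_{n+1})\bigr)=\bigl(\bar{U}_{n+1,h}-P_h\bar{u}_{n+1}\bigr)+P_h\rho_{n+1},
\]
so that, by the first part of Theorem~\ref{teorsalocal3} ($\rho_n=O(k^3)$), the first claim reduces to $\bar{U}_{n+1,h}-P_h\bar{u}_{n+1}=O(k\varepsilon_h)$, and by the second part of that theorem ($A_0^{-1}\rho_n=O(k^4)$) the second claim will follow from the analogous estimate multiplied by $A_{h,0}^{-1}$.

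To bound $\bar{U}_{n+1,h}-P_h\bar{u}_{n+1}$ I would compare each discrete building block ($\bar{V}_{n,h}$, $\bar{W}_{n,h,j}$, $\bar{\tilde{V}}_{n,h}$, $\bar{\tilde{W}}_{n,h,i}$, defined as in (\ref{discstages}) but carrying the order-$3$ boundary data of (\ref{fd3})) with the $P_h$-image of the corresponding continuous parabolic problem from (\ref{vwunt}) and (\ref{timelocal3}). Subtracting the two systems, the $C_h$-terms cancel because the boundary datum built into the discrete problem is precisely $\partial$ of the continuous solution, and the residual is driven by $A_{h,0}(P_h-R_h)(\cdot)$ through (\ref{rh}). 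The continuous solutions belong to $Z$: this is read off the explicit $\varphi_j$-expansions displayed in the proof of Theorem~\ref{teorsalocal3} (obtained via Lemma 3.1 in \cite{acr2}), together with (H2a) and the new hypotheses $A^lu(t)\in Z$ ($l\le 3$), $A^lf(t,u(t))\in Z$ ($l\le 2$) and $Af(t+\tau,u(t)+\tau\dot u(t))\in Z$. Then the variation-of-constants formula and the $\varepsilon_h$-part of (\ref{consistency}) give $\bar{\tilde{V}}_{n,h}(k)-P_h\bar{\tilde{v}}_n(k)=O(k\varepsilon_h)$ and, for the $\bar{\tilde{W}}$- and stage blocks, the same bound plus a propagated initial mismatch $f(t_n+c_ik,\bar{K}_{n,h,i})-f(t_n+c_ik,P_h\bar{K}_{n,i})$, which by (H3) is of the order of $\bar{K}_{n,h,i}-P_h\bar{K}_{n,i}$. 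An induction on $i$ exactly as in the proof of Theorem~\ref{teorsalocalfd2} then yields $\bar{K}_{n,h,i}-P_h\bar{K}_{n,i}=O(k\varepsilon_h)$, and reassembling the pieces gives $\bar{U}_{n+1,h}-P_h\bar{u}_{n+1}=O(k\varepsilon_h)$, hence $\rho_{n,h}=O(k^3+k\varepsilon_h)$.

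For the second claim I would split $A_{h,0}^{-1}\rho_{n+1,h}=A_{h,0}^{-1}(\bar{U}_{n+1,h}-P_h\bar{u}_{n+1})+A_{h,0}^{-1}P_h\rho_{n+1}$. Repeating the previous estimates with a leading factor $A_{h,0}^{-1}$, the building-block differences become $O(k\eta_h)$ by the $\eta_h$-part of (\ref{consistency}); the stage $f$-difference terms $A_{h,0}^{-1}e^{(1-c_i)kA_{h,0}}[f(t_n+c_ik,\bar{K}_{n,h,i})-f(t_n+c_ik,P_h\bar{K}_{n,i})]$ are rewritten, using that $\bar{K}_{n,i}=u(t_n)+O(k)$ (from the proof of Theorem~\ref{teorsalocal3}) and the $O(k\varepsilon_h)$ bound on $\bar{K}_{n,h,i}-P_h\bar{K}_{n,i}$ just established, as $e^{(1-c_i)kA_{h,0}}A_{h,0}^{-1}f_u(t_n,P_hu(t_n))(\bar{K}_{n,h,i}-P_h\bar{K}_{n,i})+O(k^2\varepsilon_h)$; then (\ref{cspch}) bounds $A_{h,0}^{-1}f_uA_{h,0}$ and an induction gives $A_{h,0}^{-1}(\bar{K}_{n,h,i}-P_h\bar{K}_{n,i})=O(k\eta_h)$, so that $A_{h,0}^{-1}(\bar{U}_{n+1,h}-P_h\bar{u}_{n+1})=O(k\eta_h+k^2\varepsilon_h)$. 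For the last term, $\partial(A_0^{-1}\rho_{n+1})=0$ and (\ref{rh}) give $A_{h,0}R_h(A_0^{-1}\rho_{n+1})=P_h\rho_{n+1}$, hence $A_{h,0}^{-1}P_h\rho_{n+1}=P_h(A_0^{-1}\rho_{n+1})+(R_h-P_h)(A_0^{-1}\rho_{n+1})$; the first summand is $O(k^4)$ by the second part of Theorem~\ref{teorsalocal3}, and, since $A_0^{-1}\rho_{n+1}\in Z$ (by (H2a) and the regularity hypotheses) with $\|A_0^{-1}\rho_{n+1}\|_Z=O(k)$ — because $u(t_{n+1})=u(t_n)+k\dot u(t_n^*)$ and $\dot u(t_n^*)\in Z$ — the second summand is $O(k\eta_h)$. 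Collecting everything gives $A_{h,0}^{-1}\rho_{n,h}=O(k^4+k\eta_h+k^2\varepsilon_h)$.

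The hard part will be the bookkeeping rather than any new idea: checking that \emph{every} continuous building block — including the ones carrying $\varphi_2$- and $\varphi_3$-boundary terms and the nested arguments such as $f(t_n+c_ik,u(t_n)+c_ik\dot u(t_n))$ — actually lies in $Z$ so that (\ref{consistency}) is applicable, and then commuting $A_{h,0}^{-1}$ through the stage $f$-differences while tracking carefully which remainders are $O(k^2\varepsilon_h)$ and which are $O(k\eta_h)$; in particular, one must make sure it is the crude estimate $\|A_0^{-1}\rho_{n+1}\|_Z=O(k)$ (and not a sharper one) that produces the $k\eta_h$ term quoted in the statement.
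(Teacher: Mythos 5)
Your proposal is correct and follows exactly the route the paper intends: the paper gives no separate proof of Theorem \ref{teorsalocalfd3}, stating only that it follows "a similar proof to that of Theorem \ref{teorsalocalfd2}", and your argument is precisely that proof carried out with the order-$3$ building blocks of (\ref{fd3}) and (\ref{timelocal3}), with the same decomposition (\ref{decompl}), the same $A_{h,0}(P_h-R_h)$ residual argument via (\ref{rh}) and (\ref{consistency}), the same use of (\ref{cspch}) for the $A_{h,0}^{-1}$ estimates, and the same treatment of $A_{h,0}^{-1}P_h\rho_{n+1}$ through $R_h(A_0^{-1}\rho_{n+1})$ with $\|A_0^{-1}\rho_{n+1}\|_Z=O(k)$.
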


\subsubsection{Global error of the full discretization}
\label{secfd3}

This subsection is different from 4.1.3 in the fact that, for both Dirichlet and Robin/Neumann boundary conditions, numerical differentiation must be used to approximate the corresponding boundary values in (\ref{fd3}). Because of that, in order to assure convergence with this technique, we will have to ask that $k$ is sufficiently small with respect to $h^\gamma$, where $\gamma$ is the parameter which turns up when applying numerical differentiation, as stated in Remark \ref{rembc3}. Again, the classical argument would lead to a worse bound for the global error than in parabolic problems, when a summation-by-parts arguments can be used.
\begin{theorem}
Under the hypotheses of the first part of Theorem \ref{teorsalocalfd3}, if there exists a constant $C$ such that
\begin{eqnarray}
\frac{k}{h^\gamma} \le C,
\label{condcfl}
\end{eqnarray}
when considering Dirichlet boundary conditions, $e_{n,h}=O(k^2+\varepsilon_h+k \nu_h)$ and, with Robin/Neumann boundary conditions, $e_{n,h}=O(k^2+\varepsilon_h+k \nu_h+k \mu_{k,1})$, where $\nu_h$ and $\mu_{k,1}$ are the errors coming respectively from numerical differentiation in space and time according to Remark \ref{rembc3}. On the other hand, under the hypotheses of the second part of Theorem \ref{teorsalocalfd3},
but assuming also (\ref{spp}) and that
\begin{eqnarray}
&&\hspace{-0.5cm}f\in C^4([0,T]\times X, X), \,\,   u\in C^1([0,T], D(A^3))\cap C^5([0,T],X), \nonumber \\
&&\hspace{-0.5cm} A^l f(t,u(t))\in C^1([0,T],X),  \,l=1,2,\,\, A f(t+\tau,u(t)+\tau \dot{u}(t))\in C^1([0,T]\times [0,\tau_0] , X),  \nonumber \\
&&\hspace{-0.5cm} A^l \dot{u}(t)\in Z, \, l=0,1,2,3, \quad \frac{d}{dt} A^l f(t,u(t))\in Z, \, l=0,1,2, \quad t\in [0,T],\nonumber \\
&&\hspace{-0.5cm} \frac{d}{dt} A^l f(t+\tau,u(t)+\tau \dot{u}(t))\in Z, \, l=0,1, \quad \tau \in (0,\tau_0],
\label{regglob3}
\end{eqnarray}
it follows that, when considering Dirichlet boundary conditions, $e_{n,h}=O(k^3+k\varepsilon_h+\eta_h+k \nu_h)$ and, with Robin/Neumann boundary conditions, $e_{n,h}=O(k^3+k \varepsilon_h+\eta_h+k \mu_{k,1}+k \nu_h)$.
\label{teorsa3globalfd3}
\end{theorem}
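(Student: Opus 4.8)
The plan is to reproduce, one order higher, the two–layer strategy already used for Theorem \ref{teorsaglobfd2}: first push the one–step errors through the recursion to get the ``classical'' bound, and then sharpen it by a summation–by–parts argument that exploits (\ref{spp}). The one genuinely new feature with respect to Section 4.1.3 is that several of the boundary data appearing in (\ref{fd3}) --- namely $\partial A^2 u(t_n)$ and $\partial A f(t_n,u(t_n))$, and for Robin/Neumann also $\partial f(t_n+c_ik,u(t_n)+c_ik\dot u(t_n))$ --- are now available only through numerical differentiation, so the propagated error of those approximations must be carried along, and this is precisely where the CFL condition (\ref{condcfl}) enters. I would keep (\ref{condcfl}) as a standing assumption in both parts, since it is what makes those terms harmless.

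For the first assertion I would write $e_{n+1,h}=(U_{n+1,h}-\bar U_{n+1,h})+\rho_{n+1,h}$ as in (\ref{decompg}) and expand $U_{n+1,h}-\bar U_{n+1,h}$ from (\ref{fd3}). The difference splits into $e^{kA_{h,0}}e_{n,h}$; the terms $f(t_n+c_ik,K_{n,h,i})-f(t_n+c_ik,\bar K_{n,h,i})$, which after an induction on $i$ exactly as in (\ref{KKb}) and by (H3) are $O(e_{n,h})$, hence contribute $O(ke_{n,h})$ through the outer factor $k\sum_i b_i$; and the terms carrying $C_h$ times the errors of the numerically–differentiated boundary values. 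For the last group I would invoke Remark \ref{rembc3} and Table \ref{tabla} to bound each such error by $O(\nu_h+e_{n,h}/h^\gamma)$ (respectively, for the time–dependent boundary data, by $O(k\mu_{k,1}+e_{n,h})$), combine the accompanying $k^l\varphi_l(\cdot\, A_{h,0})$ with $A_{h,0}^{-1}$ via the recurrence (\ref{recurf}) so that what multiplies the error is $A_{h,0}^{-1}C_h$, bounded by (H2c), and then use (\ref{condcfl}) to absorb the surviving factor $k^2/h^\gamma$ (or $k/h^\gamma$ together with the extra $k$ of the outer sum) into $O(k)$. This produces the recursion
$$e_{n+1,h}=e^{kA_{h,0}}e_{n,h}+O(ke_{n,h})+O(k^2\nu_h)+O(k^2\mu_{k,1})+\rho_{n+1,h},$$
the $\mu_{k,1}$ term being absent for Dirichlet data. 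With $\|e^{kA_{h,0}}\|_h\le 1$ from (H1a), a discrete Gronwall lemma, and $\rho_{n+1,h}=O(k^3+k\varepsilon_h)$ from the first part of Theorem \ref{teorsalocalfd3}, summation over the $O(T/k)$ steps yields $e_{n,h}=O(k^2+\varepsilon_h+k\nu_h)$ for Dirichlet and $e_{n,h}=O(k^2+\varepsilon_h+k\nu_h+k\mu_{k,1})$ for Robin/Neumann.

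For the second assertion I would keep the same recursion but now also apply $A_{h,0}^{-1}$ and split each forcing term into a piece of the form $A_{h,0}\delta_j$ plus a genuinely small remainder, exactly as in the proof of Theorem \ref{teorsaglobfd2} and of Theorem 22 in \cite{acrnl}: one has $A_{h,0}^{-1}\rho_{j+1,h}=O(k^4+k\eta_h+k^2\varepsilon_h)$ by the second part of Theorem \ref{teorsalocalfd3}, and the extra regularity (\ref{regglob3}) ensures that the leading coefficients of the local error and of the numerically–differentiated boundary data depend smoothly on $t$, so that consecutive $\delta_j$ differ by $O(k\|\delta_j\|)$. Then $\sum_j e^{(n-1-j)kA_{h,0}}A_{h,0}\delta_j$ is controlled by an Abel (summation–by–parts) manipulation together with (\ref{spp}), $\|kA_{h,0}\sum_r e^{rkA_{h,0}}\|_h\le C$, which in effect divides by $k$ and turns $O(k^4+k\eta_h+k^2\varepsilon_h)$ into $O(k^3+\eta_h+k\varepsilon_h)$; the small remainders accumulate harmlessly, and the numerical–differentiation contributions, still governed by (\ref{condcfl}), remain $O(k\nu_h)$ (and $O(k\mu_{k,1})$ in the Robin/Neumann case). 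A final discrete Gronwall lemma absorbs the $O(ke_{n,h})$ feedback term and gives $e_{n,h}=O(k^3+k\varepsilon_h+\eta_h+k\nu_h)$ for Dirichlet and $e_{n,h}=O(k^3+k\varepsilon_h+\eta_h+k\mu_{k,1}+k\nu_h)$ for Robin/Neumann.

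The main obstacle I expect is the bookkeeping in the summation–by–parts step: one must check that each of the many $C_h$–carrying terms of (\ref{fd3}), after multiplication by $A_{h,0}^{-1}$ and reorganisation through (\ref{recurf}), indeed produces a forcing of the split form $A_{h,0}\delta_j+(\text{small})$ with $\delta_j$ smooth in $t_j$ and of the right size, and that the $e_{n,h}/h^\gamma$ factors propagated inside those terms are dominated, via (\ref{condcfl}), by the quantity being estimated --- all while keeping the $O(ke_{n,h})$ term compatible with the discrete Gronwall lemma. Verifying that the hypotheses (\ref{regglob3}) are exactly what is needed to make the differences $\delta_{j+1}-\delta_j$, and the analogous differences of the numerically–differentiated data, of order $k$ is the delicate, though essentially routine, part of the argument.
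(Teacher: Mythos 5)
Your proposal follows essentially the same route as the paper: the decomposition (\ref{decompg}), the propagation of the numerically-differentiated boundary errors from Table \ref{tabla} through (\ref{fd3}) using (\ref{recurf}), (H2c) and the CFL condition (\ref{condcfl}) to arrive at the recursion $e_{n+1,h}=e^{kA_{h,0}}e_{n,h}+O(ke_{n,h})+O(k^2\nu_h)+O(k^2\mu_{k,1})+\rho_{n+1,h}$ (without the $\mu_{k,1}$ term for Dirichlet), then the classical Gronwall argument with the first part of Theorem \ref{teorsalocalfd3}, and finally the summation-by-parts refinement based on (\ref{spp}), the regularity (\ref{regglob3}) and the second part of Theorem \ref{teorsalocalfd3}, exactly as the paper does (it delegates that last step to the argument of Theorem \ref{teorsaglobfd2} and Theorem 22 of \cite{acrnl}). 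The proposal is correct and matches the paper's proof.
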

\begin{proof}
For the proof, as in Theorem \ref{teorsaglobfd2}, we must consider the decomposition (\ref{decompg}) where $\bar{U}_{n+1,h}$ is calculated as $U_{n+1,h}$ but starting from $P_h u(t_n)$ and calculating the boundaries in (\ref{fd3}) in an exact way. In contrast, according to Table \ref{tabla}, when considering $U_{n+1,h}$, the boundaries in (\ref{fd3}) can just be calculated approximately.

More precisely, with Dirichlet boundary conditions, the terms on the boundary for the stages in (\ref{fd3}) can be calculated exactly. However, when calculating $U_{n+1,h}$, $\partial A^2 u(t_n)$ and $\partial A f(t_n,u(t_n))$ can just be calculated except for $O(\nu_h+\frac{e_{n,h}}{h^\gamma})$.
 Because of this,
\begin{eqnarray}
\lefteqn{U_{n+1,h}-\bar{U}_{n+1,h}= e^{k A_{h,0}} e_{n,h}+k^3 \varphi_3(k A_{h,0})C_h O(\nu_h+\frac{e_{n,h}}{h^\gamma})} \nonumber \\
&&+k \sum_{i=1}^s b_i \bigg[ e^{(1-c_i) k A_{h,0}}[f(t_n+c_i k, K_{n,h,i})-f(t_n+c_i k, \bar{K}_{n,h,i})]\nonumber \\
&& \hspace{2cm} +(1-c_i)^2 k^2 \varphi_2((1-c_i)k A_{h,0})C_h O(\nu_h+\frac{e_{n,h}}{h^\gamma})\bigg], \nonumber
\end{eqnarray}
where $K_{n,h,i}-\bar{K}_{n,h,i}=O(e_{n,h})$ as in the proof of Theorem \ref{teorsaglobfd2}. Therefore, using (\ref{recurf}) and (H2c),
\begin{eqnarray}
\lefteqn{\hspace{-1.5cm}U_{n+1,h}-\bar{U}_{n+1,h}= e^{k A_{h,0}} e_{n,h}+k^2 (\varphi_2(k A_{h,0})-\frac{1}{2}I)O(\nu_h+\frac{e_{n,h}}{h^\gamma})+O(k e_{n,h})} \nonumber \\
&&+k^2 \sum_{i=1}^s b_i(1-c_i) (\varphi_1(k A_{h,0})-I)O(\nu_h+\frac{e_{n,h}}{h^\gamma}), \nonumber
\end{eqnarray}
from what, using condition (\ref{condcfl}),
\begin{eqnarray}
e_{n+1,h}=e^{k A_{h,0}}e_{n,h}+O(k e_{n,h})+O(k^2 \nu_h)+\rho_{n+1,h}. \nonumber
\end{eqnarray}
The classical argument of convergence and the first part of Theorem \ref{teorsalocalfd3} leads then to the first result of this theorem for Dirichlet boundary conditions. For the second part, the second part of Theorem \ref{teorsalocalfd3} must be used, apart from (\ref{spp}) and the additional regularity (\ref{regglob3}).

On the other hand, with Robin/Neumann boundary conditions, there is some error when approximating the boundaries for both the stages and the numerical solution. More precisely, using Table \ref{tabla} and (\ref{fd3}),
\begin{eqnarray}
K_{n,h,i}-\bar{K}_{n,h,i}&=&e^{c_i k A_{h,0}} e_{n,h}+c_i^2 k^2 \varphi_2(c_i k A_{h,0}) C_h O(e_{n,h}) \nonumber \\
&&+k \sum_{j=1}^{i-1} a_{ij}[O(e_{n,h})+(c_i-c_j)k \varphi_1((c_i-c_j)k A_{h,0}) C_h O(e_{n,h})]
\nonumber \\
&=&e^{c_i k A_{h,0}} e_{n,h}+c_i k [\varphi_1(c_i k A_{h,0})-I] O(e_{n,h})\nonumber \\
&&+k \sum_{j=1}^{i-1} a_{ij}\bigg[O(e_{n,h})+ [e^{(c_i-c_j)k A_{h,0}}-I] O(e_{n,h})\bigg] \nonumber \\
&=&e^{c_i k A_{h,0}} e_{n,h}+O(k e_{n,h})=O(e_{n,h}), \nonumber
\end{eqnarray}
and then
\begin{eqnarray}
U_{n+1,h}-\bar{U}_{n+1,h}&=&e^{k A_{h,0}}e_{n,h}+k^3 \varphi_3(k A_{h,0})C_h O(\mu_{k,1}+\frac{e_{n,h}}{k}+\nu_h+\frac{e_{n,h}}{h^\gamma}) \nonumber \\
&&+k \sum_{i=1}^s b_i \bigg[ e^{(1-c_i)k A_{h,0}}[f(t_n+c_i k,K_{n,h,i})-f(t_n+c_i k,\bar{K}_{n,h,i})] \nonumber \\
&&\hspace{2cm}+(1-c_i)k \varphi_1((1-c_i)k A_{h,0})C_h O(k \mu_{k,1}+e_{n,h}) \nonumber \\
&&\hspace{2cm}+(1-c_i)^2 k^2 \varphi_2((1-c_i)k A_{h,0})C_h O(\mu_{k,1}+\frac{e_{n,h}}{k}+\nu_h+\frac{e_{n,h}}{h^\gamma})\bigg]
\nonumber \\
&=&e^{k A_{h,0}}e_{n,h}+k^2 [\varphi_2(k A_{h,0})-\frac{1}{2}I] O(\mu_{k,1}+\frac{e_{n,h}}{k}+\nu_h+\frac{e_{n,h}}{h^\gamma}) \nonumber \\
&&+k \sum_{i=1}^s b_i \bigg[ O(e_{n,h})+[e^{(1-c_i)k A_{h,0}}-I]O(k \mu_{k,1}+e_{n,h}) \nonumber \\
&&\hspace{2.5cm}+(1-c_i)k [\varphi_1(k A_{h,0})-I]O(\mu_{k,1}+\frac{e_{n,h}}{k}+\nu_h+\frac{e_{n,h}}{h^\gamma})
\bigg]. \nonumber
\end{eqnarray}
From this, under condition (\ref{condcfl}),
$$
e_{n+1,h}=e^{k A_{h,0}}e_{n,h}+O(k e_{n,h}+k^2 \mu_{k,1}+k^2 \nu_h)+\rho_{n+1,h},$$
so that, using the first part of Theorem \ref{teorsalocalfd3} and the classical argument of convergence, $e_{n,h}=O(k^2+\varepsilon_h+k \nu_h+k \mu_{k,1})$. Again, under the second set of hypotheses in Theorem \ref{teorsalocalfd3} and using (\ref{spp}) and the regularity (\ref{regglob3}), the finer result $e_{n,h}=O(k^3+k \varepsilon_h+\eta_h+k \mu_{k,1}+k \nu_h)$ can be achieved.
\end{proof}

\subsection{Searching for local order 4}
The idea is again to calculate the stages as in (\ref{etapast}) but with $v_n$, $w_{n,j}$ as in (\ref{timelocal3}), and $u_{n+1}$ through (\ref{unt}) but with $\tilde{v}_n$, $\tilde{w}_{n,j}$ satisfying
\begin{eqnarray}
&&\left\{ \begin{array}{rcl} \dot{\tilde{v}}_n(s)&=& A \tilde{v}_n(s), \\ \tilde{v}_n(0)&=&u_n, \\ \partial \tilde{v}_n(s)&=&\partial [u(t_n)+s A u(t_n)+\frac{s^2}{2} A^2 u(t_n)+\frac{s^3}{6} A^3 u(t_n)], \end{array}\right.
 \nonumber \\
 &&\left\{ \begin{array}{rcl} \dot{\tilde{w}}_{n,j}(s)&=& A \tilde{w}_{n,j}(s), \\ \tilde{w}_{n,j}(0)&=&f(t_n+c_jk,K_{n,j}), \\ \partial \tilde{w}_{n,j}(s)&=&\partial
 \bigg[f\bigg(t_n+c_j k,u(t_n)+c_j k A u(t_n)+\frac{c_j^2 k^2}{2} A^2 u(t_n) \nonumber \\
 &&\hspace{1cm} +k \sum_{r=1}^{j-1} a_{j,r}[f(t_n+c_rk, u(t_n)+c_r k \dot{u}(t_n))+(c_j-c_r)k A f(t_n,u(t_n))]\bigg) \nonumber
 \\
 &&\hspace{0.75cm}+s A f(t_n+c_j k, u(t_n)+c_j k \dot{u}(t_n))+\frac{s^2}{2} A^2 f(t_n,u(t_n))\bigg]. \end{array}\right. \label{timelocal4}
\end{eqnarray}
Again, after discretizing these problems in space and using the variation-of-constants formula, the following full discretation formulas arise:
\begin{eqnarray}
\lefteqn{K_{n,h,i}=e^{c_i k A_{h,0}}U_{n,h}+\sum_{l=1}^3 c_i^l  k^l  \varphi_l(c_i k A_{h,0}) C_h \partial A^{l-1} u(t_n)} \nonumber \\
&&+
k\sum_{j=1}^{i-1} a_{ij}\bigg[ e^{(c_i-c_j) k A_{h,0}}f(t_n+c_j k, K_{n,h,j}) \nonumber \\
&&\hspace{2cm}+(c_i-c_j) k \varphi_1((c_i-c_j)k A_{h,0})C_h \partial f(t_n+c_j k,u(t_n)+c_j k \dot{u}(t_n)) \nonumber \\
&&\hspace{2cm}+(c_i-c_j)^2 k^2 \varphi_2((c_i-c_j)k A_{h,0})C_h \partial A f(t_n,u(t_n))
\bigg], \nonumber \\
\lefteqn{U_{n+1,h}=e^{k A_{h,0}}U_{n,h}+\sum_{l=1}^4 k^l \varphi_l(k A_{h,0}) C_h \partial A^{l-1} u(t_n)} \nonumber \\
&&+k\sum_{i=1}^s b_i  \bigg[e^{(1-c_i)k A_{h,0}}f(t_n+c_i k, K_{n,h,i}) \nonumber \\
&& \hspace{2cm} +(1-c_i)k \varphi_1((1-c_i) kA_{h,0}) C_h \partial f\bigg(t_n+c_i k,u(t_n)+c_i k A u(t_n)+ \frac{c_i^2 k^2}{2} A^2 u(t_n)
\nonumber \\
&& \hspace{3.5cm}
+k \sum_{j=1}^{i-1} a_{ij}[f(t_n+c_j k, u(t_n)+c_j k \dot{u}(t_n))+(c_i-c_j)k A f(t_n,u(t_n))] \bigg)\nonumber \\
&& \hspace{2cm} +(1-c_i)^2 k^2 \varphi_2((1-c_i) kA_{h,0}) C_h \partial A f(t_n+c_i k,u(t_n)+c_i k \dot{u}(t_n)) \nonumber \\
&& \hspace{2cm} +(1-c_i)^3 k^3 \varphi_3((1-c_i) kA_{h,0}) C_h \partial A^2 f(t_n,u(t_n))
\bigg].\label{fd4}
\end{eqnarray}

\begin{remark}
\begin{table}
\begin{tabular}{|l|c|c|}
\hline
& Dirichlet & Robin/Neumann \\ \hline
$\partial u(t_n)$ & - & - \\ \hline
$\partial A u(t_n)/ \partial f(t_n,u(t_n))$& - & $O(e_{n,h})$ \\ \hline
$\partial f(t_n+c_i k, u(t_n)+c_i k \dot{u}(t_n))$ & - & $O(k \mu_{k,1}+e_{n,h})$ \\ \hline
$\partial A^2 u(t_n)/ \partial A f(t_n,u(t_n))$ & $O(\nu_h+\frac{e_{n,h}}{h^\gamma})$ & $O(\mu_{k,1}+\frac{e_{n,h}}{k}+\nu_h+\frac{e_{n,h}}{h^\gamma})$ \\ \hline
$\partial A f(t_n+c_i k, u(t_n)+c_i k \dot{u}(t_n))$ & $O(\nu_h+\frac{e_{n,h}}{h^\gamma}+\frac{k \mu_{k,1}}{h^\gamma})$ & $O(k \mu_{k,1}+k \mu_{k,2}+\frac{e_{n,h}}{k}+\nu_h+\frac{e_{n,h}}{h^\gamma})$ \\ \hline
$\partial A^3 u(t_n)/ \partial A^2 f(t_n,u(t_n))$ & $O(\nu_h+\frac{e_{n,h}}{k h^\gamma}+\frac{ \mu_{k,1}}{h^\gamma})$ & $O( \mu_{k,1}+ \mu_{k,2}+\frac{e_{n,h}}{k^2}+\nu_h+\frac{e_{n,h}}{k h^\gamma})$ \\ \hline
\end{tabular}
\caption{Errors which are committed at each step when approximating the corresponding boundary terms with the suggested technique to avoid order reduction, as justified in Remarks \ref{rembc}, \ref{rembc3} and \ref{rembc4}.}
\label{tabla}
\end{table}

Apart from the terms on the boundaries which were already necessary to achieve local order $3$, now
\begin{eqnarray}
\partial A^3 u(t_n), \quad \partial A^2 f(t_n,u(t_n)) \mbox{ and } \partial A f(t_n+c_i k,u(t_n)+c_i k \dot{u}(t_n))
\label{termsb4}
\end{eqnarray}
must also be calculated. Using (\ref{laibvp}) and simplifying notation,
\begin{eqnarray}
\partial A^3 u=\partial [\stackrel{\dots}{u}-(f_{tt}+2 f_{tu}\dot{u}+f_{uu}\dot{u}^2+f_u \ddot{u})]-\partial [A(f_t+f_u \dot{u})]-\partial A^2 f,
\label{a3b4}
\end{eqnarray}
where here everything is assumed to be evaluated either on $t_n$ or $(t_n,u(t_n))$. Now, in order to calculate $\partial [A(f_t+f_u \dot{u})]$ and $\partial A^2 f$, we can see that, with $A$ the second derivative in one dimension and $f$ like in (\ref{nonlinearterm}),
$$
A(f_t+f_u \dot{u})=\phi'''(u) u_x^2 \dot{u}+\phi''(u) u_{xx} \dot{u}+ 2 \phi''(u) u_x \dot{u}_x+\phi'(u) \dot{u}_{xx}+h_{txx},
$$
and it happens that $u_{xx}$ and $\dot{u}_{xx}$ can be calculated through
\begin{eqnarray}
u_{xx}=\dot{u}-\phi(u)-h, \quad \dot{u}_{xx}= \ddot{u}-\phi'(u) \dot{u}-h_t. \label{fint}
\end{eqnarray}
As for $A^2 f$,
$$A^2 f=\phi^{(4)}(u) u_x^4+ 6 \phi^{(3)}(u) u_x^2 u_{xx}+ 3 \phi''(u) u_{xx}^2+4 \phi''(u) u_x u_{xxx}+\phi'(u)u_{xxxx}+h_{xxxx},
$$
where $u_{xx}$ can be calculated as in (\ref{fint}) and
$$
u_{xxx}=\dot{u}_x-\phi'(u)u_x-h_x, \quad u_{xxxx}=\ddot{u}-\phi'(u) \dot{u}-h_t-\phi''(u) u_x^2-\phi'(u)(\dot{u}-\phi(u)-h)-h_{xx}.$$
Finally,
$$
A f(t_n+c_i k, u(t_n)+c_i k \dot{u}(t_n))=\phi''(u+c_i k \dot{u})(u_x+c_i k \dot{u}_x)^2+\phi'(u+c_i k \dot{u})(u_{xx}+c_i k \dot{u}_{xx})+h_{xx}(t_n+c_i k),
$$
where, in the right-hand-side $u$ and its derivatives are all evaluated at $t=t_n$ and, for $u_{xx}$, (\ref{fint}) can again be used.

Therefore, with Dirichlet boundary conditions, the boundary of every term is exactly calculable in terms of data except for $u_x$ and $\dot{u}_x$. Then, $u_x$ can be approximated as in Remark \ref{rembc3} and so $\dot{u}_x$ considering also space numerical differentiation over the exact values of $\dot{u}$ on the boundary and the approximated values of $\dot{u}$ in the interior of the domain, which can again be approximated by numerical differentiation in time from the values of the numerical solution. It can thus be deduced that, in this case, the approximation of $\dot{u}_x$ at the boundary differs from the exact in $O(\nu_h+\frac{e_{n,h}}{h k}+\frac{\mu_{k,1}}{h})$, where $\mu_{k,1}$ is the error which comes from the approximation of the first derivative if the values from which it is calculated were all exact. Because of this, the error of approximation of the first two terms in (\ref{termsb4}) behaves as $O(\nu_h+ \frac{e_{n,h}}{h k}+\frac{\mu_{k,1}}{h})$ and, for the last one, as $O(\nu_h+ \frac{e_{n,h}}{h k}+k\frac{\mu_{k,1}}{h})$ due to the factor $k$ multiplying $\dot{u}_x$. For a general operator $A$, we will assume all terms in (\ref{termsb4}) are calculated except for an error as that in Table \ref{tabla}
for some real value $\gamma$.

With Robin/Neumann boundary conditions, in one dimension and with $A$ the second derivative in space, $u|_{\partial \Omega},u_x|_{\partial \Omega}, \dot{u}|_{\partial \Omega}, \dot{u}_x|_{\partial \Omega},\ddot{u}|_{\partial \Omega}, \ddot{u}_x|_{\partial \Omega}$ will also be needed. (Notice that $\stackrel{\dots}{u}$ just appears linearly in (\ref{a3b4}) and therefore is given directly in terms of data through $\partial\stackrel{\dots}{u}=\stackrel{\dots}{g}$.) Then, $u|_{\partial \Omega}$ and $u_x|_{\partial \Omega}$ are calculated except for $O(e_{n,h})$ as in Remark \ref{rembc}; $\dot{u}|_{\partial \Omega}$ and $\dot{u}_x|_{\partial \Omega}$ except for $O(\mu_{k,1}+\frac{e_{n,h}}{k})$ as in Remark \ref{rembc3} and $\ddot{u}|_{\partial \Omega}$ and $\ddot{u}_x|_{\partial \Omega}$ in a similar way through numerical differentiation except for $O(\mu_{k,2}+\frac{e_{n,h}}{k^2})$, where $\mu_{k,2}$ comes from the error in the numerical approximation of the second derivative. For a general operator $A$, we thus assume that the error in the calculation of the first two  boundaries of (\ref{termsb4})  is
as written in the right-bottom part of Table \ref{tabla}
where the last two terms come from the possible error in the numerical approximation of some spatial derivatives of $u$ and $\dot{u}$.
For the boundary of the last term in (\ref{termsb4}), the error in the calculation, due to the factor $k$ multiplying $\dot{u}$, is also as specified in Table \ref{tabla}.
\label{rembc4}
\end{remark}
\begin{remark}
We notice that, if $\partial u(t)=\partial A u(t)= \partial A^2 u(t)=\partial A^3 u(t)=0$, from (\ref{laibvp}) it follows that $\partial f(t,u(t))=\partial A f(t,u(t))=\partial A^2 f(t,u(t))=0$. As in Remark \ref{remvan3}, $\partial f(t_n+c_i k, u(t_n+c_i k))=0$ differs from $\partial f(t_n+c_i k, u(t_n)+c_i k \dot{u}(t_n))$ in $O(k^2)$. Then, the stages in (\ref{fd4}) differ from those in the classical approach (\ref{lawsonca}) in
\begin{eqnarray}
k\sum_{j=1}^{i-1} a_{ij} (c_i-c_j)k \varphi_1((c_i-c_j)k A_{h,0})C_h O(k^2)=k \sum_{j=1}^{i-1} a_{ij} [e^{(c_i-c_j)k A_{h,0}}-I]A_{h,0}^{-1}C_h O(k^2)=O(k^3). \nonumber
\end{eqnarray}
On the other hand, $\partial f(t_n+c_i k, u(t_n+c_i k))=0$ also differs from the factor with the big parenthesis in (\ref{fd4}) in $O(k^3)$ and $\partial A f(t_n+c_i k, u(t_n+c_i k))=0$ differs from
$\partial A f(t_n+c_i k, u(t_n)+c_i k \dot{u}(t_n))$ in $O(k^2)$ . Because of all this, the difference in the numerical solution between (\ref{fd4}) and the classical approach is
\begin{eqnarray}
\lefteqn{k\sum_{i=1}^s b_i \bigg[ e^{(1-c_i)k A_{h,0}}O(k^3)+(1-c_i)k \varphi_1((1-c_i)k A_{h,0})C_h O(k^3)} \nonumber \\
&&\hspace{1cm}+(1-c_i)^2 k^2 \varphi_2((1-c_i)k A_{h,0})C_h O(k^2)\bigg] \nonumber \\
\lefteqn{=k\sum_{i=1}^s b_i \bigg[ e^{(1-c_i)k A_{h,0}}O(k^3)+[e^{(1-c_i)k A_{h,0}}-I]A_{h,0}^{-1}C_h O(k^3)}
\nonumber \\
&&\hspace{1cm}+(1-c_i) k [\varphi_1((1-c_i)k A_{h,0})-I]A_{h,0}^{-1}C_h O(k^2)\bigg]=O(k^4). \nonumber
\end{eqnarray}
This justifies, through Theorems \ref{teorsalocal4} and \ref{teorsalocalfd4}, that the local error with the classical approach, under these particular boundary conditions, behaves with order $4$ under the assumptions of those theorems.
\end{remark}

\subsubsection{Local error of the time semidiscretization}
With a similar proof to that of Theorems \ref{teorsalocal2} and \ref{teorsalocal3}, the following result follows:
\begin{theorem}
\label{teorsalocal4}
Under hypotheses (A1)-(A5) and (H1)-(H3), assuming also that, for every $t\in [0,T]$, $f(t,u(t))\in D(A^3)$; for small enough $\tau$ ($\tau\le \tau_0$), $f(t+\tau,u(t)+\tau \dot{u}(t)),f_t(t+\tau,u(t)+\tau \dot{u}(t)), f_u(t+\tau,u(t)+\tau \dot{u}(t))\dot{u}(t)\in D(A^2)$; for small enough $k$ and $\sigma$ ($k\le k_0, \sigma \le \sigma_0$) and $i=1,\dots,s$,
\begin{eqnarray}
&&f\bigg(t+c_i k, u(t)+c_i k A u(t)+\frac{c_i^2 \sigma^2}{2}A^2 u(t)\nonumber \\
&&\hspace{2cm}+\sigma \sum_{j=1}^{i-1} a_{ij}[f(t+c_j k, u(t)+c_j k \dot{u}(t))+(c_i-c_j)k A f(t,u(t))]\bigg)\in D(A), \nonumber \\
&&f_u\bigg(t+c_i k, u(t)+c_i k A u(t)+\frac{c_i^2 \sigma^2}{2}A^2 u(t)\nonumber \\
&&\hspace{2cm}
+\sigma \sum_{j=1}^{i-1} a_{ij}[f(t+c_j k, u(t)+c_j k \dot{u}(t))+(c_i-c_j)k A f(t,u(t))]\bigg)\in D(A); \nonumber
\end{eqnarray}
\begin{eqnarray}
&& u\in C([0,T], D(A^4))\cap C^4([0,T],X), \nonumber \\
&&f\in C^3([0,T]\times X,X), \nonumber \\
&& A^l f(t,u(t))\in C([0,T],X), \quad l=1,2,3, \nonumber \\
&& A^l f_t(t+\tau,u(t)+\tau \dot{u}(t)), A^l [f_u(t+\tau,u(t)+\tau \dot{u}(t))\dot{u}(t)] \in C([0,T]\times [0,\tau_0],X), \quad l=1,2, \nonumber
\end{eqnarray}
\begin{eqnarray}
&& A \bigg[ f_u\bigg(t+c_i k, u(t)+c_i k \dot{u}(t)+\frac{c_i^2 \sigma^2}{2}A^2 u(t)\nonumber \\
&&\hspace{2cm}+\sigma \sum_{j=1}^{i-1} a_{ij}[f(t+c_j k, u(t)+c_j k \dot{u}(t))+(c_i-c_j)k A f(t,u(t))]\bigg)
\nonumber \\
&&\hspace{1cm}
\cdot[c_i^2 \sigma A^2 u(t)+\sum_{j=1}^{i-1} a_{ij}[f(t+c_j k, u(t)+c_j k \dot{u}(t))+(c_i-c_j)k A f(t,u(t))]\bigg] \nonumber \\
&& \hspace{3cm}\in C([0,T]\times [0,k_0]\times [0,\sigma_0],X).
\label{regloc4}
\end{eqnarray}
and if the Runge-Kutta tableau corresponds to a method of classical order $\ge 3$, it follows that $\rho_n=O(k^4)$. Moreover, if $f\in C^4([0,T]\times X,X), u\in C^5([0,T],X)$, (\ref{cspc}) holds
and the Runge-Kutta tableau corresponds to a method of classical order $\ge 4$, it follows that $A_{0}^{-1} \rho_n=O(k^5)$.
\end{theorem}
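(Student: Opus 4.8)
The plan is to follow the strategy of the proofs of Theorems \ref{teorsalocal2} and \ref{teorsalocal3}, now pushing every expansion one order further. First I would solve the linear inhomogeneous boundary value problems that define $\bar{v}_n$, $\bar{w}_{n,i}$ (these coincide with the ones in (\ref{timelocal3})) and $\bar{\tilde v}_n$, $\bar{\tilde w}_{n,i}$ (given by (\ref{timelocal4}) with $u_n$ replaced by $u(t_n)$ and $K_{n,j}$ by $\bar{K}_{n,j}$), by repeated application of Lemma 3.1 in \cite{acr2}, possibly in the slightly extended form already used for $\bar{\tilde w}_{n,i}$ in the proof of Theorem \ref{teorsalocal3}. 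Since the boundary data are polynomials in $s$ of degree up to $3$ with coefficients $\partial A^l u(t_n)$, $\partial A^l f(t_n,u(t_n))$ and $\partial A f(t_n+c_jk,\cdot)$, the closed forms involve $\varphi_1,\varphi_2,\varphi_3$ acting on $A^l u(t_n)$ and on the various $A^l f$ terms; the regularity hypotheses collected in (\ref{regloc4}) are exactly what guarantees that all these quantities lie in the relevant domains of $A$ and depend continuously on $t$ (and on $k,\tau,\sigma$), so that the $\varphi_j$ may be applied and the remainders stay bounded.

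Next I would insert those closed forms into (\ref{etapast}) and (\ref{unt}) to obtain explicit expansions of $\bar{K}_{n,i}$ and $\bar{u}_{n+1}$. Using the recursion (\ref{recurf}) to peel off the polynomial part of each $e^{sA_0}$ and $\varphi_j(sA_0)$, together with the node conditions (\ref{cscond}) and the classical order conditions of the underlying RK method of order $\ge 3$ (i.e.\ $\sum b_ic_i^2=1/3$ and $\sum_i b_i\sum_j a_{ij}c_j=1/6$ in addition to the lower-order ones), I would check first that $\bar{K}_{n,i}=u(t_n)+c_ik\dot{u}(t_n)+\tfrac{c_i^2k^2}{2}\ddot{u}(t_n)+O(k^3)$ and, crucially, that the complicated argument of $f$ in the boundary data of $\tilde w_{n,j}$ in (\ref{timelocal4}) agrees with $u(t_n+c_jk)$ up to $O(k^3)$; then that $\bar{u}_{n+1}=u(t_n)+k\dot{u}(t_n)+\tfrac{k^2}{2}\ddot{u}(t_n)+\tfrac{k^3}{6}\stackrel{\dots}{u}(t_n)+O(k^4)$. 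Identifying the $k^3$ coefficient with $\tfrac16\stackrel{\dots}{u}(t_n)$ reduces, after a Taylor expansion of $f(t_n+c_ik,\bar{K}_{n,i})$, to differentiating (\ref{laibvp}) twice in time, exactly as at the end of the proof of Theorem \ref{teorsalocal3}; this yields $\rho_n=O(k^4)$.

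For the refined estimate $A_0^{-1}\rho_n=O(k^5)$ I would keep one more term in every expansion, isolate the coefficient of $k^4$ in $\bar{u}_{n+1}-u(t_{n+1})$, and multiply by $A_0^{-1}$. As in (\ref{forloc}) and its order-$3$ analogue, the terms of the form $(kA_0)^{-1}\big(\varphi_j((1-c_i)kA_0)-\text{polynomial}\big)$ acting on $A^l f$ or $A^l u$ are $O(k)$ once $A_0^{-1}$ has been absorbed, while the delicate contributions are those coming from the second-order Taylor remainder of $f$ at the stages, which produce expressions of the type $A_0^{-1}f_u(t_n,u(t_n))A_0 w$ with $w\in D(A_0)$; these are controlled by hypothesis (\ref{cspc}). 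Collecting everything and using now the classical order conditions of a method of order $\ge 4$ (in particular $\sum b_ic_i^3=1/4$ together with the remaining trees of order $4$), the $k^4$ coefficient of $A_0^{-1}\rho_n$ equals $\tfrac1{24}A_0^{-1}$ applied to a bracket that vanishes upon differentiating (\ref{laibvp}) three times, whence $A_0^{-1}\rho_n=O(k^5)$.

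The main obstacle is bookkeeping: the number of elementary terms grows sharply at order $4$, and one must match each monomial $c_i^pk^q$ produced by the stages and by the $\varphi_j$ recursions against the corresponding elementary differential of the exact solution, invoking the appropriate RK order condition. A secondary but real difficulty is the control, through (\ref{cspc}), of the several commutator-type terms $A_0^{-1}[\,\text{Jacobian of }f\,]A_0(\cdots)$ that first appear at this order because the stages must now be expanded to $O(k^3)$; verifying that each such term indeed has the structure covered by (\ref{cspc}), and that the nested $f$-evaluations in (\ref{timelocal4}) do not introduce new uncontrolled ones, is where most of the care is required.
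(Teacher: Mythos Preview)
Your proposal is correct and follows precisely the route the paper indicates: the paper itself gives no detailed proof for this theorem, stating only that it follows ``with a similar proof to that of Theorems \ref{teorsalocal2} and \ref{teorsalocal3}'', and your plan---applying Lemma~3.1 of \cite{acr2} to get closed forms for $\bar v_n,\bar w_{n,i},\bar{\tilde v}_n,\bar{\tilde w}_{n,i}$, expanding via (\ref{recurf}) and the RK order conditions, and using (\ref{cspc}) for the commutator-type terms in the $A_0^{-1}$ estimate---is exactly that. Your identification of the bookkeeping and the $A_0^{-1}f_u A_0$ control as the main points of care is accurate.
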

\subsubsection{Local error of the full discretization}
In a similar way to the proof of Theorem \ref{teorsalocalfd3}, it follows that
\begin{theorem}
Under the same hypotheses of the first part of Theorem \ref{teorsalocal4}, and assuming also that, for $t\in [0,T]$, $i=1,\dots,s$ and $k\in [0,k_0]$,
\begin{eqnarray}
&&A^l u(t)\in Z, \,\, l=0,1,\dots,4, \quad A^l f(t,u(t))\in Z, \,\,  l=1,2,3, \nonumber \\
&& A^l f(t+c_i k, u(t)+c_i k \dot{u}(t))\in Z, \,\, l=1,2, \nonumber \\
&&A f\bigg(t+c_i k, u(t)+c_i k Au(t)+\frac{c_i^2 k^2}{2} A^2 u(t) \nonumber \\
&&\hspace{1cm}+k \sum_{j=1}^{i-1} a_{ij}[f(t+c_j k, u(t)+c_j k \dot{u}(t))+(c_i-c_j)k A f(t,u(t))]\bigg)\in Z,
\label{reg4}
\end{eqnarray}
it happens that $\rho_{n,h}=O(k^4+k \varepsilon_h)$ where the constant in Landau notation is independent of $k$ and $h$. Moreover, under the additional hypotheses of the second part of Theorem \ref{teorsalocal4}, together with condition (\ref{cspch}),
$A_{h,0}^{-1} \rho_{n,h}=O(k^5+k \eta_h)$.
\label{teorsalocalfd4}
\end{theorem}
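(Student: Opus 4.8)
The plan is to mimic, almost line by line, the proof of Theorem \ref{teorsalocalfd2} (and its analogue Theorem \ref{teorsalocalfd3}), the only genuinely new feature being the larger collection of boundary correction terms in (\ref{fd4}) and the correspondingly heavier regularity list (\ref{reg4}). First I would split
\[
\rho_{n+1,h}=\bigl(\bar{U}_{n+1,h}-P_h\bar{u}_{n+1}\bigr)+P_h\bigl(\bar{u}_{n+1}-u(t_{n+1})\bigr)=\bigl(\bar{U}_{n+1,h}-P_h\bar{u}_{n+1}\bigr)+P_h\rho_{n+1},
\]
so that the second summand is $O(k^4)$ by the first part of Theorem \ref{teorsalocal4}, and the whole task reduces to showing that the pure space-discretization defect $\bar{U}_{n+1,h}-P_h\bar{u}_{n+1}$ is $O(k\varepsilon_h)$.

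To estimate that defect I would write both $\bar{u}_{n+1}$ and $\bar{U}_{n+1,h}$ in the ``stages plus update'' form whose building blocks are the parabolic subproblems $\bar{v}_n,\bar{w}_{n,j},\bar{\tilde{v}}_n,\bar{\tilde{w}}_{n,i}$ (continuous, with the boundary data prescribed in (\ref{timelocal3})--(\ref{timelocal4})) and their space discretizations $\bar{V}_{n,h},\bar{W}_{n,h,j},\bar{\tilde{V}}_{n,h},\bar{\tilde{W}}_{n,h,i}$. Subtracting, each difference $D(s)=\bar{\tilde{V}}_{n,h}(s)-P_h\bar{\tilde{v}}_n(s)$ solves, exactly as in (\ref{pvV}), $\dot{D}(s)=A_{h,0}D(s)+A_{h,0}(P_h-R_h)\bar{\tilde{v}}_n(s)$ with $D(0)=0$, whose inhomogeneity is $O(\varepsilon_h)$ pointwise by the first inequality of (\ref{consistency}) \emph{provided} $\bar{\tilde{v}}_n(s)\in Z$; this membership is precisely what (\ref{reg4}), combined with the closed forms of the subproblems obtained via Lemma 3.1 of \cite{acr2} and hypothesis (H2a), is designed to supply. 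Variation of constants and $\|e^{\tau A_{h,0}}\|_h\le 1$ then give $D(k)=O(k\varepsilon_h)$; for the $\bar{\tilde{W}}$- and $\bar{W}$-type differences I would also carry the initial-data discrepancy $f(t_n+c_ik,\bar{K}_{n,h,i})-f(t_n+c_ik,P_h\bar{K}_{n,i})$, controlled by $\bar{K}_{n,h,i}-P_h\bar{K}_{n,i}=O(k\varepsilon_h)$ proved by induction on $i$ exactly as in (\ref{Kb-PK2}), using (\ref{fandph}) and the fact that the stage subproblems of (\ref{timelocal4}) have the same structure. Summing all contributions gives $\bar{U}_{n+1,h}-P_h\bar{u}_{n+1}=O(k\varepsilon_h)$, hence $\rho_{n,h}=O(k^4+k\varepsilon_h)$.

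For the refined bound I would again split $A_{h,0}^{-1}\rho_{n+1,h}=A_{h,0}^{-1}(\bar{U}_{n+1,h}-P_h\bar{u}_{n+1})+A_{h,0}^{-1}P_h\rho_{n+1}$. In the first summand, multiplying the previous estimates by $A_{h,0}^{-1}$ and using the second inequality of (\ref{consistency}) upgrades $O(k\varepsilon_h)$ to $O(k\eta_h)$ for the $\bar{\tilde{V}}$- and $\bar{V}$-type pieces; for the $\bar{\tilde{W}}$- and $\bar{W}$-type pieces one must commute $A_{h,0}^{-1}$ past $f(t_n+c_ik,\bar{K}_{n,h,i})-f(t_n+c_ik,P_h\bar{K}_{n,i})=f_u(\cdot)(\bar{K}_{n,h,i}-P_h\bar{K}_{n,i})$, handled exactly as in Theorem \ref{teorsalocalfd2}: by (\ref{acotet}) the stages equal $u(t_n)+O(k)$, so $f_u$ at the intermediate point differs from $f_u(t_n,P_hu(t_n))$ by $O(k)$, and (\ref{cspch}) together with the inductively established $A_{h,0}^{-1}(\bar{K}_{n,h,i}-P_h\bar{K}_{n,i})=O(k\eta_h)$ leaves a contribution $O(k\eta_h)$ (plus, as in the lower-order cases, a harmless $O(k^2\varepsilon_h)$). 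In the second summand, since $A_0^{-1}\rho_{n+1}\in\ker\partial$ we have $\partial A_0^{-1}\rho_{n+1}=0$, so (\ref{rh}) yields $A_{h,0}^{-1}P_h\rho_{n+1}=R_h(A_0^{-1}\rho_{n+1})=P_h(A_0^{-1}\rho_{n+1})+(R_h-P_h)(A_0^{-1}\rho_{n+1})$; here $P_h(A_0^{-1}\rho_{n+1})=O(k^5)$ by the second part of Theorem \ref{teorsalocal4}, while $(R_h-P_h)(A_0^{-1}\rho_{n+1})=O(k\eta_h)$ because $A_0^{-1}\rho_{n+1}\in Z$ (from the closed forms, (\ref{reg4}) and (H2a)) with $\|A_0^{-1}\rho_{n+1}\|_Z=O(k)$, the latter from $u(t_{n+1})=u(t_n)+k\dot{u}(t_n^{*})$ and $\dot{u}(t_n^{*})=Au(t_n^{*})+f(t_n^{*},u(t_n^{*}))\in Z$. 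Collecting everything yields the claimed $A_{h,0}^{-1}\rho_{n,h}=O(k^5+k\eta_h)$.

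The hard part will not be any single estimate but the bookkeeping imposed by (\ref{fd4}): the update and the stages now carry boundary corrections up to $\varphi_4$ and involve nested, $k$-dependent arguments of $f$, so the delicate point is to check \emph{systematically} that every continuous building block and every intermediate stage quantity lies in $Z$ --- which is exactly the role of the long hypothesis list of Theorem \ref{teorsalocal4} and of (\ref{reg4}) --- and to run the induction on the stage index through that nesting without losing a power of $k$. A secondary technical point is the commutator step in the $A_{h,0}^{-1}$ estimate, where $A_{h,0}^{-1}\bigl[f_u(\cdot)-f_u(t_n,P_hu(t_n))\bigr]$ must be bounded uniformly in $h$; this is where (\ref{cspch}) (and, at the continuous level in Theorem \ref{teorsalocal4}, (\ref{cspc})) is needed, together with the $O(k)$-closeness of the stages to $u(t_n)$.
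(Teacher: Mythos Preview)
Your proposal is correct and follows essentially the same approach the paper takes: the paper proves Theorem \ref{teorsalocalfd4} by referring back to the proof of Theorem \ref{teorsalocalfd3}, which in turn mimics Theorem \ref{teorsalocalfd2}, and you have reconstructed that argument faithfully, including the decomposition (\ref{decompl}), the variation-of-constants estimates on the subproblem differences relying on (\ref{reg4}) for $Z$-membership, the stage induction (\ref{Kb-PK2}), the commutator step via (\ref{cspch}), and the identity (\ref{Ah0iP}). Your remark that the $A_{h,0}^{-1}$ estimate also produces an $O(k^2\varepsilon_h)$ term, as in the lower-order theorems, is apt; the paper's statement simply does not display it separately.
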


\subsubsection{Global error of the full discretization}

In a similar way to Subsection \ref{secfd3},
\begin{theorem}
Under hypotheses of the first part of Theorem \ref{teorsalocalfd4} and assuming that (\ref{condcfl}) holds,
when considering Dirichlet boundary conditions, $e_{n,h}=O(k^3+\varepsilon_h+k \nu_h+k \mu_{k,1})$ and, with Robin/Neumann boundary conditions, $e_{n,h}=O(k^3+\varepsilon_h+k \nu_h+k \mu_{k,1}+k^2 \mu_{k,2})$, where $\nu_h$ and $\mu_{k,1}, \mu_{k,2}$ are the errors coming from numerical differentiation in space and time according to Remark \ref{rembc4}. On the other hand, under the hypotheses of the second part of Theorem \ref{teorsalocalfd4},
but assuming also (\ref{spp}) and that
\begin{eqnarray}
&&\hspace{-0.5cm}f\in C^5([0,T]\times X, X), \,\,   u\in C^1([0,T], D(A^4))\cap C^6([0,T],X), \nonumber \\
&&\hspace{-0.5cm} A^l f(\cdot,u(\cdot))\in C^1([0,T],X),  \,l=1,2,3, \nonumber \\
&&\hspace{-0.5cm}A^l f(t+c_i k,u(t)+c_i k \dot{u}(t))\in C^1([0,T] , X),  \,l=1,2, \, i=1,\dots,s, \, k \in (0,k_0], \nonumber \\
&&\hspace{-0.5cm}A f\bigg(\cdot+c_i k, u(\cdot)+c_i k Au(\cdot)+\frac{c_i^2 k^2}{2} A^2 u(\cdot) \nonumber \\
&&+k \sum_{j=1}^{i-1} a_{ij}[f(\cdot+c_j k, u(\cdot)+c_j k \dot{u}(\cdot))+(c_i-c_j)k A f(\cdot,u(\cdot))]\bigg)\in C^1([0,T],X), \nonumber \\
&&\hspace{-0.5cm} A^l \dot{u}(t)\in Z, \, l=0,1,\dots,4, \quad \frac{d}{dt} A^l f(t,u(t))\in Z, \, l=0,1,2,3, \quad t \in [0,T], \nonumber \\
&&\hspace{-0.5cm} \frac{d}{dt} A^l f(t+c_i k,u(t)+c_i k \dot{u}(t))\in Z, \, l=0,1,2, \quad k\in [0,k_0],\nonumber \\
&&\hspace{-0.5cm} \frac{d}{dt} A^l f\bigg(t+c_i k, u(t)+c_i k Au(t)+\frac{c_i^2 k^2}{2} A^2 u(t) \nonumber \\
&&+k \sum_{j=1}^{i-1} a_{ij}[f(t+c_j k, u(t)+c_j k \dot{u}(t))+(c_i-c_j)k A f(t,u(t))]\bigg)\in Z, \, l=0,1,
\label{regglob4}
\end{eqnarray}
it follows that, with Dirichlet boundary conditions, $e_{n,h}=O(k^4+k\varepsilon_h+\eta_h+k \nu_h+k \mu_{k,1})$ and, with R/N boundary conditions, $e_{n,h}=O(k^4+k \varepsilon_h+\eta_h+k \mu_{k,1}+k^2 \mu_{k,2}+k \nu_h)$.
\label{teorsa3globalfd4}
\end{theorem}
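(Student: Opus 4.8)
The plan is to repeat, one order higher, the argument already used for Theorems~\ref{teorsaglobfd2} and \ref{teorsa3globalfd3} (which in turn follow Theorem~22 of \cite{acrnl}). One starts from the splitting $e_{n+1,h}=[U_{n+1,h}-\bar U_{n+1,h}]+\rho_{n+1,h}$ as in (\ref{decompg}), where $\bar U_{n+1,h}$ is produced by (\ref{fd4}) starting at $P_hu(t_n)$ with every boundary term computed exactly, while $U_{n+1,h}$ uses the approximate boundary values whose per-step errors are collected in the lower rows of Table~\ref{tabla}. Subtracting the two instances of (\ref{fd4}), the principal part is $e^{kA_{h,0}}e_{n,h}$; the nonlinear differences $f(t_n+c_ik,K_{n,h,i})-f(t_n+c_ik,\bar K_{n,h,i})$ are controlled with (H3), and, exactly as in the proof of Theorem~\ref{teorsaglobfd2}, an induction on $i$ gives $K_{n,h,i}-\bar K_{n,h,i}=O(e_{n,h})$ up to higher-order source terms, so that the stage contribution is $O(ke_{n,h})$. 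What is left are the boundary-mismatch terms of the form $k^{l}\varphi_l(kA_{h,0})C_h\,O(\cdot)$ and $k\sum_i b_i(1-c_i)^{l}k^{l}\varphi_l((1-c_i)kA_{h,0})C_h\,O(\cdot)$, with the $O(\cdot)$ read off Table~\ref{tabla}.

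The core of the proof is to bound those mismatch terms. I would apply (\ref{recurf}), in the form $k^{j+1}\varphi_{j+1}(kA_{h,0})=k^{j}A_{h,0}^{-1}(\varphi_j(kA_{h,0})-1/j!)$, once to each of them, peeling off a factor $A_{h,0}^{-1}C_h$ (uniformly bounded by (H2c)) together with a uniformly bounded combination of $\varphi_j$'s and $I$; this leaves in front a power $k^{3}$ for the genuinely fourth-order boundaries ($\partial A^{3}u$, $\partial A^{2}f$, $\partial Af(t_n+c_ik,\cdot)$) and $k^{2}$ for the lower ones ($\partial A^{2}u$, $\partial Af$, $\partial f(t_n+c_ik,\cdot)$, etc.). Substituting the entries of Table~\ref{tabla} and invoking the CFL-type condition (\ref{condcfl}) — which, thanks to $k/h^{\gamma}\le C$, replaces every $e_{n,h}/h^{\gamma}$, $e_{n,h}/(kh^{\gamma})$, $e_{n,h}/k$, $e_{n,h}/k^{2}$ in the table by at most $O(ke_{n,h})$ once the accompanying power of $k$ is taken into account — one is left with
\begin{eqnarray*}
e_{n+1,h}=e^{kA_{h,0}}e_{n,h}+O(ke_{n,h})+S_{n,h}+\rho_{n+1,h},
\end{eqnarray*}
where, for Dirichlet conditions, $S_{n,h}=O(k^{2}\nu_h+k^{2}\mu_{k,1})$ and, for Robin/Neumann conditions, $S_{n,h}=O(k^{2}\nu_h+k^{2}\mu_{k,1}+k^{3}\mu_{k,2})$ (the single $\partial A^{2}u$-term being responsible for the $k\mu_{k,1}$ that survives in the final bound, while $\mu_{k,2}$ only enters the higher-order boundaries and hence with an extra power of $k$).

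From this recursion the four claimed estimates follow as in the earlier theorems. For the coarse pair, use $\|e^{kA_{h,0}}\|_h\le1$, telescoping, a discrete Gronwall lemma, the first part of Theorem~\ref{teorsalocalfd4} ($\rho_{n,h}=O(k^{4}+k\varepsilon_h)$, which sums to $O(k^{3}+\varepsilon_h)$ over $O(1/k)$ steps) and the fact that $S_{n,h}$ sums to $O(k\nu_h+k\mu_{k,1})$ (resp.\ $+\,k^{2}\mu_{k,2}$); this gives $e_{n,h}=O(k^{3}+\varepsilon_h+k\nu_h+k\mu_{k,1})$ for Dirichlet and $e_{n,h}=O(k^{3}+\varepsilon_h+k\nu_h+k\mu_{k,1}+k^{2}\mu_{k,2})$ for Robin/Neumann. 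For the refined pair, keep the finer structure of the local error: by the second part of Theorem~\ref{teorsalocalfd4}, $A_{h,0}^{-1}\rho_{n,h}=O(k^{5}+k\eta_h)$, and the source terms $S_{n,h}$ still carry an $A_{h,0}^{-1}$ where needed (via the same $A_{h,0}^{-1}C_h$ splitting); then, under (\ref{spp}) and the extra regularity (\ref{regglob4}), the summation-by-parts argument used in Theorem~\ref{teorsaglobfd2} (modelled on Theorem~22 of \cite{acrnl}) upgrades one power of $k$ in each summand, yielding $e_{n,h}=O(k^{4}+k\varepsilon_h+\eta_h+k\nu_h+k\mu_{k,1})$ for Dirichlet and $e_{n,h}=O(k^{4}+k\varepsilon_h+\eta_h+k\mu_{k,1}+k^{2}\mu_{k,2}+k\nu_h)$ for Robin/Neumann.

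The main obstacle is precisely the term-by-term bookkeeping in the second paragraph: one must verify, running through every entry of Table~\ref{tabla} against formula (\ref{fd4}), that after the single use of (\ref{recurf}) the power of $k$ in front of each $e_{n,h}/(k^{a}h^{b\gamma})$ contribution is at least $a+b$, so that (\ref{condcfl}) downgrades it to at most $O(ke_{n,h})$ and never to something like $O(e_{n,h})$ or $O(e_{n,h}/k)$, which would destroy the Gronwall step; and, simultaneously, that the genuine numerical-differentiation sources enter with exactly the powers of $\nu_h$, $\mu_{k,1}$, $\mu_{k,2}$ claimed, so that no term forces a worse bound. Once this is checked, the rest is a routine repetition of the order-$2$ and order-$3$ arguments, including the same discrete Gronwall and summation-by-parts lemmas.
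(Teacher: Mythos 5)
Your proposal follows essentially the same route as the paper: the decomposition (\ref{decompg}), subtraction of the two instances of (\ref{fd4}) with the stage differences handled inductively via (H3), peeling off $A_{h,0}^{-1}C_h$ through (\ref{recurf}) and (H2c), reading the boundary errors from Table \ref{tabla} and invoking (\ref{condcfl}) to reach the recursion $e_{n+1,h}=e^{kA_{h,0}}e_{n,h}+O(ke_{n,h})+S_{n,h}+\rho_{n+1,h}$ with the same $S_{n,h}$ as in the paper, followed by Gronwall for the coarse bounds and the summation-by-parts argument with the second part of Theorem \ref{teorsalocalfd4} for the refined ones. Only your parenthetical attribution is slightly off: in the Dirichlet case the surviving $k\mu_{k,1}$ does not come from $\partial A^2u$ (which carries no $\mu_{k,1}$ there) but from the $\mu_{k,1}/h^\gamma$ entries of $\partial Af(t_n+c_ik,u(t_n)+c_ik\dot u(t_n))$ and $\partial A^3u/\partial A^2f$ combined with (\ref{condcfl}); this does not affect your stated $S_{n,h}$ or the final estimates.
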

\begin{proof}
 As in  the proof of Theorem \ref{teorsa3globalfd3}, we must consider the decomposition (\ref{decompg}) and then study the difference $U_{h,n+1}-\bar{U}_{n+1}$ taking into account that the boundaries for $U_{h,n+1}$ in (\ref{fd4}) are just calculated approximately with an error which is given through Table \ref{tabla}.

More precisely, with Dirichlet boundary conditions,
\begin{eqnarray}
\lefteqn{U_{n+1,h}-\bar{U}_{n+1,h}= e^{k A_{h,0}} e_{n,h}+k^3 \varphi_3(k A_{h,0})C_h O(\nu_h+\frac{e_{n,h}}{h^\gamma})}
\nonumber \\
&& \hspace{1cm}+k^4 \varphi_4(k A_{h,0})C_h O(\nu_h+\frac{e_{n,h}}{h^\gamma k}+\frac{\mu_{k,1}}{h^\gamma}) \nonumber \\
&&+k \sum_{i=1}^s b_i \bigg[ e^{(1-c_i) k A_{h,0}}[f(t_n+c_i k, K_{n,h,i})-f(t_n+c_i k, \bar{K}_{n,h,i})]\nonumber \\
&& \hspace{2cm} +(1-c_i) k \varphi_1((1-c_i)k A_{h,0})C_h O(k^2 \nu_h+k^2 \frac{e_{n,h}}{h^\gamma})\nonumber \\
&& \hspace{2cm} +(1-c_i)^2 k^2 \varphi_2((1-c_i)k A_{h,0})C_h O(\nu_h+\frac{e_{n,h}}{h^\gamma}+ \frac{k\mu_{k,1}}{h^\gamma}) \nonumber \\
&& \hspace{2cm} +(1-c_i)^3 k^3 \varphi_3((1-c_i)k A_{h,0})C_h O(\nu_h+\frac{e_{n,h}}{h^\gamma k}+\frac{\mu_{k,1}}{h^\gamma})
\bigg], \label{form1}
\end{eqnarray}
where
\begin{eqnarray}
K_{n,h,i}-\bar{K}_{n,h,i}&=&e^{c_i k A_{h,0}}e_{n,h}+c_i^3 k^3 \varphi_3(k A_{h,0}) C_h O(\nu_h+\frac{e_{n,h}}{h^\gamma}) \nonumber \\
&&+k \sum_{j=1}^{i-1} a_{ij}\bigg[ e^{(c_i-c_j)k A_{h,0}}[f(t_n+c_j k,K_{n,h,j})-f(t_n+c_j k,\bar{K}_{n,h,j})]
\nonumber \\
&&\hspace{2cm}+(c_i-c_j)^2 k^2 \varphi_2((c_i-c_j)k A_{h,0}) C_h O(\nu_h+\frac{e_{n,h}}{h^\gamma})\bigg] \nonumber \\
&=&O(e_{n,h}+k^2 \nu_h), \nonumber
\end{eqnarray}
and, for the last equality, (\ref{recurf}), (H2c) and (\ref{condcfl}) have been used. Inserting this in (\ref{form1}) and using again (\ref{recurf}), (H2c) and (\ref{condcfl}), it follows that
\begin{eqnarray}
U_{n+1,h}-\bar{U}_{n+1,h}= e^{k A_{h,0}} e_{n,h}+ O(k^2 \nu_h+k e_{n,h}+k^2 \mu_{k,1}). \nonumber
\end{eqnarray}
From here,
\begin{eqnarray}
e_{n+1,h}=e^{k A_{h,0}}e_{n,h}+O(k e_{n,h})+O(k^2 \nu_h+k^2 \mu_{k,1})+\rho_{n+1,h}, \nonumber
\end{eqnarray}
and using a discrete Gronwall Lemma and the first part of Theorem \ref{teorsalocalfd4}, the first part of the theorem follows for Dirichlet boundary conditions. For the second part, the second part of Theorem \ref{teorsalocalfd4} must be used, apart from (\ref{spp}) and the additional regularity (\ref{regglob4}).

 As for Robin/Neumann boundary conditions, with similar arguments,
\begin{eqnarray}
K_{n,h,i}-\bar{K}_{n,h,i}&=&e^{c_i k A_{h,0}} e_{n,h}+c_i^2 k^2 \varphi_2(k A_{h,0}) C_h O(e_{n,h})+c_i^3 k^3 \varphi_3(k A_{h,0}) C_h O(\mu_{k,1}+\frac{e_{n,h}}{k}+\nu_h+\frac{e_{n,h}}{h^\gamma}) \nonumber \\
&&+k \sum_{j=1}^{i-1} a_{ij}\bigg[e^{(c_i-c_j) k A_{h,0}} [f(t_n+c_j k, K_{n,h,j})-f(t_n+c_j k, \bar{K}_{n,h,j})] \nonumber \\
&&\hspace{2cm}+(c_i-c_j) k \varphi_1((c_i-c_j)k A_{h,0}) C_h O(e_{n,h}+k(\mu_{k,1}+\frac{e_{n,h}}{k}))\nonumber \\
&&\hspace{2cm}+(c_i-c_j)^2 k^2  \varphi_2((c_i-c_j)k A_{h,0}) C_h O(\mu_{k,1}+\frac{e_{n,h}}{k}+\nu_h+\frac{e_{n,h}}{h^\gamma})\bigg] \nonumber \\
&=&e^{c_i k A_{h,0}} e_{n,h}+O(k e_{n,h}+k^2 \mu_{k,1}+k^2 \nu_h)=O(e_{n,h}+k^2 \mu_{k,1}+k^2 \nu_h), \nonumber
\end{eqnarray}
from what
\begin{eqnarray}
\lefteqn{U_{n+1,h}-\bar{U}_{n+1,h}} \nonumber \\
&=&e^{k A_{h,0}}e_{n,h}+
k^2 \varphi_2(k A_{h,0})C_h O(e_{n,h})+
k^3 \varphi_3(k A_{h,0})C_h O(\mu_{k,1}+\frac{e_{n,h}}{k}+\nu_h+\frac{e_{n,h}}{h^\gamma}) \nonumber \\
&&+k^4 \varphi_4(k A_{h,0})C_h O(\mu_{k,1}+\mu_{k,2}+\frac{e_{n,h}}{k^2}+\nu_h+\frac{e_{n,h}}{k h^\gamma}) \nonumber \\
&&+k \sum_{i=1}^s b_i \bigg[ e^{(1-c_i)k A_{h,0}}[f(t_n+c_i k,K_{n,h,i})-f(t_n+c_i k,\bar{K}_{n,h,i})] \nonumber \\
&&\hspace{2cm}+(1-c_i)k \varphi_1((1-c_i)k A_{h,0})C_h O(k e_{n,h}+k^2 \mu_{k,1}+k^2 \mu_{k,2}+k^2 \nu_h+\frac{k^2}{h^\gamma}e_{n,h}) \nonumber \\
&&\hspace{2cm}+(1-c_i)^2 k^2 \varphi_2((1-c_i)k A_{h,0})C_h O(k \mu_{k,1}+k \mu_{k,2}+\frac{e_{n,h}}{k}+\nu_h+\frac{e_{n,h}}{h^\gamma})\nonumber \\
&&\hspace{2cm}+(1-c_i)^3 k^3 \varphi_3((1-c_i)k A_{h,0})C_h O(\mu_{k,1}+ \mu_{k,2}+\frac{e_{n,h}}{k^2}+\nu_h+\frac{e_{n,h}}{k h^\gamma})
\bigg]
\nonumber \\
&=&e^{k A_{h,0}}e_{n,h}+O(k e_{n,h}+k^2 \mu_{k,1}+k^3 \mu_{k,2}+k^2 \nu_h). \nonumber
\end{eqnarray}
From this,
$$
e_{n+1,h}=e^{k A_{h,0}}e_{n,h}+O(k e_{n,h}+k^2 \mu_{k,1}+k^3 \mu_{k,2}+k^2 \nu_h)+\rho_{n+1,h},$$
so that, using the first part of Theorem \ref{teorsalocalfd4} and the classical argument of convergence, $e_{n,h}=O(k^3+\varepsilon_h+k \nu_h+k \mu_{k,1}+k^2 \mu_{k,2})$. Again, under the second set of hypotheses in Theorem \ref{teorsalocalfd4} and using (\ref{spp}) and the regularity (\ref{regglob4}), the finer result $e_{n,h}=O(k^4+k \varepsilon_h+\eta_h+k \mu_{k,1}+k^2 \mu_{k,2}+k \nu_h)$ is achieved.
\end{proof}

\section{Numerical results}
In this section, we show some numerical experiments which corroborate the previous results. For that, we have considered the following set of problems
\begin{eqnarray}
u_t(t,x)&=&u_{xx}(t,x)+u^2(t,x)+h(t,x), \quad x\in[0,1], \quad t \in [0,1], \nonumber \\
u(0,x)&=&u_0(x), \label{problemnr}
\end{eqnarray}
where the boundary conditions are either Dirichlet
\begin{eqnarray}
u(t,0)=g_0(t), \quad u(t,1)=g_1(t), \label{cdir}
\end{eqnarray}
or mixed (Dirichlet/Neumann),
\begin{eqnarray}
u(t,0)=g_0(t), \quad u_x(t,1)=g_1(t), \label{cdn}
\end{eqnarray}
and where $h(t,x)$, $u_0(x)$ and $g_0(t),g_1(t)$ are such that the exact solution of the problem is either
\begin{eqnarray}
u(x,t)=x(x-1) \cos(x+t) \quad \mbox{or} \quad  u(x,t)=\cos(x+t).
\label{sols}
\end{eqnarray}
In the first place, as space discretization we have considered the symmetric 2nd-order difference scheme for which, in the Dirichlet case (\ref{cdir}),
\begin{eqnarray}
A_{h,0}=\mbox{tridiag}(1,-2,1)/h^2, \quad C_h [g_0(t), g_1(t)]^T=[g_0(t), 0, \dots, 0, g_1(t)]^T/h^2,
\label{Ah0dir}
\end{eqnarray}
and, in the Dirichlet/Neumann case (\ref{cdn}),
\begin{eqnarray}
A_{h,0}=\left[ \begin{array}{ccccc} -2 & 1 & 0 &\dots & 0 \\ 1 & -2 & 1 & & \\ & \ddots & \ddots & \ddots & \\ & &1 & -2 & 1 \\ & & 0 & 2 & -2 \end{array} \right], \quad C_h \left[\begin{array}{c} g_0(t) \\ g_1(t) \end{array} \right]= \left[\begin{array}{c} \frac{1}{h^2}g_0(t) \\0 \\ \vdots \\ 0 \\ \frac{2}{h} g_1(t)\end{array}\right].
\label{Ah0cdn}
\end{eqnarray}
All differential problems (\ref{problemnr}) with boundary conditions (\ref{cdir})-(\ref{cdn}) satisfy hypotheses (A1)-(A5) with $X=C([0,1])$ and the respective space discretizations (\ref{Ah0dir}) and (\ref{Ah0cdn}) satisfy hypotheses (H1)-(H3), as it was justified in \cite{acrnl} for $Z=C^4([0,1])$, $\varepsilon_h,\eta_h$ being $O(h^2)$ for (\ref{Ah0dir}) and  $\varepsilon_h=O(h)$, $\eta_h=O(h^2)$ for (\ref{Ah0cdn}). Besides, the considered solutions (\ref{sols}) and $f$ are so smooth that all conditions of regularity in the paper are satisfied. Finally, although we do not provide a proof for the conditions (\ref{acotses}) and (\ref{cspch}) with the maximum norm, it can be numerically verified that those conditions hold uniformly on $h$ for $A_{h,0}$ in (\ref{Ah0dir}) and (\ref{Ah0cdn}). Moreover, (\ref{cspch}) points out that (\ref{cspc}) is also satisfied in the continuous case for the regular functions $u(t)$ which are considered, although its proof is not an aim of this paper either.

\subsection{Second-order method}
\begin{table}[h]
\begin{center}
\begin{tabular}{|c|c|c|c|c|} \hline
k & 1e-3 & 5e-4 & 2.5e-4 & 1.25e-4 \\ \hline
Local error & 9.7450e-4 & 4.8158e-4 & 2.3693e-4 & 1.1580e-4 \\ \hline
Order & & 1.02 & 1.02 & 1.03 \\ \hline
Global error & 1.3461e-3 & 6.6579e-4 & 3.2779e-4 & 1.6034e-4 \\ \hline
Order & & 1.02 & 1.02 & 1.03 \\ \hline
\end{tabular}
\label{t1}
\caption{Local and global error when integrating Dirichlet problem with vanishing boundary conditions with the classical approach (\ref{lawsonca}) associated to the second-order method (\ref{RK2}).}
\end{center}

\end{table}

\begin{table}[h]
\begin{center}
\begin{tabular}{|c|c|c|c|c|} \hline
h & k=1e-3 & k=5e-4 & k=2.5e-4 & k=1.25e-4 \\ \hline
2e-3 & 1.2404e+2& 6.1563e+1 &3.0339e+1 &1.4751e+1 \\ \hline
1e-3 & 4.9902e+2& 2.4903e+2& 1.2404e+2& 6.1563e+1  \\ \hline
5e-4 & 1.9990e+3& 9.9902e+2& 4.9902e+2& 2.4903e+2\\ \hline
\end{tabular}
\caption{Local error when integrating Dirichlet problem with non-vanishing boundary conditions with the classical approach (\ref{lawsonca}) associated to the second-order method (\ref{RK2}).}
\end{center}
\label{t2}
\end{table}

\begin{table}[h]
\begin{center}
\begin{tabular}{|c|c|c|c|c|} \hline
h & k=1e-3 & k=5e-4 & k=2.5e-4 & k=1.25e-4 \\ \hline
2e-3 & 6.7023e+1& 3.3264e+1& 1.6394e+1& 7.9729e+0\\ \hline
1e-3 & 2.6962e+2& 1.3455e+2& 6.7023e+1& 3.3264e+1  \\ \hline
5e-4 & 1.0801e+3& 5.3977e+2& 2.6962e+2& 1.3455e+2\\ \hline
\end{tabular}
\caption{Global error when integrating Dirichlet problem with non-vanishing boundary conditions with the classical approach (\ref{lawsonca}) associated to the second-order method (\ref{RK2}).}
\end{center}
\label{t3}
\end{table}

\begin{table}[h]
\begin{center}
\begin{tabular}{|c|c|c|c|c|} \hline
k & 1e-03 & 5e-4 & 2.5e-4 & 1.25e-4 \\ \hline
Local error & 1.5664e-7& 3.9176e-8& 9.7933e-9& 2.4473e-9\\ \hline
Order & &2.00 & 2.00& 2.00\\ \hline
Global error & 8.2929e-7& 2.0714e-7& 5.1712e-8& 1.2903e-8\\ \hline
Order & & 2.00& 2.00& 2.00\\ \hline
\end{tabular}
\caption{Local and global error when integrating Dirichlet problem with nonvanishing boundary conditions with the suggested approach (\ref{fd2}) associated to the second-order method (\ref{RK2}), with which no numerical differentiation is required.}
\end{center}
\label{t4}
\end{table}

\begin{table}[h]
\begin{center}
\begin{tabular}{|c|c|c|c|c|} \hline
k & 8e-3 & 4e-3 & 2e-3 & 1e-3 \\ \hline
Local error & 1.3126e-7 & 1.6797e-8 & 2.1350e-9 & 2.7857e-10 \\ \hline
Order & & 2.97 & 2.98 & 2.94 \\ \hline
Global error & 5.9892e-7 & 1.4972e-7 & 3.7367e-8 & 9.2309e-9 \\ \hline
Order & & 2.00 & 2.00 & 2.02 \\ \hline
\end{tabular}
\caption{Local and global error when integrating  Dirichlet problem with nonvanishing boundary conditions with the suggested approach (\ref{fd3}) associated to the second-order method (\ref{RK2}), for which numerical differentiation is required.}
\end{center}
\label{t5}
\end{table}

We first show the results which are obtained when integrating problem (\ref{problemnr}) associated to the Dirichlet boundary conditions with the Lawson method which is constructed with the second-order RK tableau
\begin{eqnarray}
\begin{array}{c|cc} 0 & 0 & \\ 1 & 1 & 0 \\ \hline & \frac{1}{2} & \frac{1}{2} \end{array}. \label{RK2}
\end{eqnarray}
When considering the solution in (\ref{sols}) which vanishes at the boundary, the classical approach (\ref{lawsonca}) shows local and global order $1$ in time, as shown in Table 2 for $h=5\times 10^{-4}$, for which the error in space is negligible. (This corroborates Theorems \ref{teorcalocal} and \ref{teorcaglobal}.) However, when the solution does not vanish at the boundary, although the local and global orders are still $1$, the errors are very big and even grow when $h$ diminishes. This was justified in Theorems \ref{teorcalocal3} and \ref{teorcaglobal3} and can be observed in Tables 3 and 4. However, that bad behaviour can be solved by using the suggested approach (\ref{fd2}), where every term on the boundary can be calculated in terms of data, without resorting to numerical differentiation. In such a way, local and global order 2 is obtained in Table 5  with $h=5\times 10^{-4}$, for which the error in space is again negligible and does not grow with $h$. This corroborates Theorems \ref{teorsalocalfd2} and \ref{teorsaglobfd2}. On the other hand, with this method, it is even possible to achieve local order $3$ with formula (\ref{fd3}), although numerical differentiation is required to approximate the boundary of the first derivative in space of the exact solution, as it is thoroughly explained in Remark \ref{rembc3}. For that, we have considered the $2$-BDF formula, for which $\nu_h=O(h^2)$ and, as already predicted by the first part of Theorems \ref{teorsalocalfd3} and \ref{teorsa3globalfd3}, Table 6 shows local order near $3$ and global order $2$, but with a size of errors quite smaller than those of Table 5.

\subsection{Third-order method}
\begin{table}[h]
\begin{center}
\begin{tabular}{|c|c|c|c|c|} \hline
k & 0.2 & 0.1 & 0.05 & 0.025 \\ \hline
Local error & 9.7639e-1 & 9.8964e-1 & 9.9108e-1 & 9.8877e-1 \\ \hline
Order & & -0.02 & -0.00 & 0.00 \\ \hline
Global error & 5.3822e-1 & 5.3736e-1 & 5.3613e-1 & 5.3439e-1\\ \hline
Order & & 0.00 & 0.00 & 0.00 \\ \hline
\end{tabular}
\caption{Local and global error when integrating mixed D/N problem with nonvanishing boundary conditions with the classical approach associated to the third-order method (\ref{RK3}), $h= 10^{-3}$.}
\end{center}
\label{t6}
\end{table}
\begin{table}[h]
\begin{center}
\begin{tabular}{|c|c|c|c|c|} \hline
k & 0.2 & 0.1 & 0.05 & 0.025 \\ \hline
Local error & 1.3911e-3 & 1.7489e-3& 2.1806e-5& 2.7212e-6\\ \hline
Order & &2.99 & 3.00& 3.00\\ \hline
Global error & 1.5136e-3& 2.3369e-4& 2.9913e-5& 3.6533e-6\\ \hline
Order & &2.70 &2.97 & 3.03\\ \hline
\end{tabular}
\caption{Local and global error when integrating mixed D/N problem with nonvanishing boundary conditions with the suggested approach (\ref{fd3}) associated to the third-order method (\ref{RK3}), for which numerical differentiation is required, $h= 10^{-3}$.}
\end{center}
\label{t7}
\end{table}

In this subsection we show the results which are obtained when considering (\ref{problemnr}) associated to the mixed Dirichlet/Neumann boundary conditions in (\ref{cdn}) and integrating it in time with the Lawson method associated to the third order Heun method
\begin{eqnarray}
\begin{array}{c|ccc} 0 &  & &\\ \frac{1}{3} & \frac{1}{3} & & \\  \frac{2}{3} & 0  & \frac{2}{3} &  \\  \hline & \frac{1}{4} & 0 & \frac{3}{4} \end{array}. \label{RK3}
\end{eqnarray}
We have centered on the solution of (\ref{sols}) which does not vanish at the boundary. The classical approach shows no convergence either on the local or global error where the timestepsize diminishes, as it is justified in Theorems \ref{teorcalocal2} and \ref{teorcaglobal2}, and it is shown in Table 7. (Notice the different behaviour with respect to the classical approach in Tables 3 and 4. Here the errors do not diminish with $k$ but, although not shown here for the sake of brevity, they neither grow when $h$ diminishes as it happens in those tables. This is due to the fact that now every $c_i$ is different from $1$). However, we can get local and thus global order $3$ with our modified approach (\ref{fd3}), by calculating the terms on the boundary following again Remark \ref{rembc3}. For the Dirichlet boundary condition, we have used numerical differentiation in space with the $2$-BDF formula and, for the Neumann one, numerical differentiation in time with the $3$-BDF scheme. In such a case, Theorem \ref{teorsalocalfd3} as well as the second part of Theorem \ref{teorsa3globalfd3} apply, with $\nu_h=O(h^2)$ and $\mu_{k,1}=O(k^3)$. Therefore, when the error in space is negligible, order $3$ in the timestepsize should be seen when, as Table 8 corroborates.

\subsection{Fourth-order method}
\begin{table}[h]
\begin{center}
\begin{tabular}{|c|c|c|c|c|} \hline
k & 0.2 & 0.1 & 0.05 & 0.025 \\ \hline
Local error & 1.8356e-4& 1.0396e-5& 6.1679e-7& 3.7509e-8\\ \hline
Order & &4.14 & 4.08& 4.04\\ \hline
Global error & 1.9072e-4& 9.3054e-6& 5.4646e-7& 3.5333e-8\\ \hline
Order & & 4.36 & 4.09& 3.95\\ \hline
\end{tabular}
\caption{Local and global error when integrating Dirichlet problem with nonvanishing boundary conditions with the suggested approach (\ref{fd4}) associated to the fourth-order method (\ref{RK4}), when the terms on the boundary are exactly provided, $h= 5\times
10^{-4}$.}
\end{center}
\label{t8}
\end{table}

\begin{table}[h]
\begin{center}
\begin{tabular}{|c|c|c|c|c|} \hline
k & 2.5e-2 & 1.25e-2 & 6.25e-3 & 3.125e-3 \\ \hline
Local error & 3.4537e-8& 2.0441e-9& 1.1954e-10& 6.8247e-12\\ \hline
Order & & 4.08& 4.10& 4.13\\ \hline
Global error & 3.3314e-8 & 2.0054e-9 & 1.1968e-10 & 7.0050e-12 \\ \hline
Order & & 4.05 & 4.07 & 4.09  \\ \hline
\end{tabular}
\caption{Local and global error when integrating Dirichlet problem with nonvanishing boundary conditions with the suggested approach (\ref{fd4}) associated to the fourth-order method (\ref{RK4}), when the terms on the boundary are  calculated through numerical differentiation and  Gauss-Lobatto collocation space discretization is used.}
\end{center}
\label{t9}
\end{table}

Finally, we show that local and global order $4$ can be obtained when integrating in time with the Lawson method associated to the fourth-order RK method
\begin{eqnarray}
\begin{array}{c|cccc} 0 &  & & &\\ \frac{1}{3} & \frac{1}{3} & & & \\  \frac{2}{3}   & -\frac{1}{3} & 1 & &  \\ 1 & 1 & -1 & 1 &\\  \hline & \frac{1}{8} & \frac{3}{8} & \frac{3}{8} & \frac{1}{8} \end{array}. \label{RK4}
\end{eqnarray}
Nevertheless, we point out that it is necessary to take condition (\ref{condcfl}) into account. In our problem (\ref{problemnr}), $\gamma=1$, as it was justified in Remarks \ref{rembc3} and \ref{rembc4}. For the sake of brevity, we will center on the Dirichlet boundary condition in (\ref{cdir}), and we will directly integrate that problem with the suggested formulas (\ref{fd4}) by inserting the needed boundaries in an exact way from the known solution. As Table 9 shows, local and global order $4$ are observed in that way. However, when not knowing the exact solution, those boundaries must be calculated in terms of data following Remark \ref{rembc4}. For that, we have considered again the $3$ (resp. 2)-BDF formula for the numerical differentiation in time (resp. in space) and, according to Theorems \ref{teorsalocalfd4} and \ref{teorsa3globalfd4}, global order $4$ in the timestepsize should be observed when the error in space is negligible and (\ref{condcfl}) holds for some constant $C$. However, as the error in space is just of second order, in order that the error in space is negligible with respect to that in time,  $h$ must be quite small with respect to $k$, and then the global error exploits to infinity with the parameters of Table 9 because condition (\ref{condcfl}) is not satisfied for a suitable constant $C$.

In spite of all this, the problem can be solved by considering a more accurate space discretization. Thus we have considered a Gauss-Lobatto collocation space discretization with $17$ nodes, for which the error in space is nearly of the order of rounding errors for this problem. Besides, the space grid is quite moderate, so condition (\ref{condcfl}) is very weak in this case. Considering then numerical differentiation in time as before and numerical differentiation in space through the derivation of the corresponding collocation polynomials, the results in Table 10 are obtained, where both local and global order $4$ are achieved.

\section*{Acknowledgements}
This research has been supported by Ministerio de Ciencia e
Innovaci\'on and Regional Development European Funds through project MTM2015-66837-P and by Junta de Castilla y Le\'on and Feder through project VA024P17.

\end{document}